\documentclass{article}
\usepackage[utf8]{inputenc}
\usepackage[colorlinks=true,linktocpage=true]{hyperref}
\usepackage{amsmath}
\usepackage{amssymb}
\usepackage{bbm}
\usepackage{amsthm}
\usepackage{enumerate}

\usepackage{amsmath}
\allowdisplaybreaks[4]
\usepackage{amssymb}
\usepackage{mathtools}
\usepackage{enumerate}
\usepackage{amsthm}
\usepackage{caption}
\usepackage{subcaption}
\usepackage{hyperref}
\hypersetup{
hidelinks
}

\textheight=210mm
\textwidth=150mm
\topmargin=-10mm
\oddsidemargin=-0mm
\evensidemargin=-35mm
\hoffset0.6cm
\voffset-0.1cm

\usepackage[table]{xcolor} 
\usepackage[all]{xy}
\usepackage{tikz}

\newcommand\cA{{\mathcal A}}

\newcommand\cF{{\mathcal F}}

\newcommand\cH{{\mathcal H}}

\newcommand\cO{{\mathcal O}}

\newcommand\cU{{\mathcal U}}

\newcommand{\id}{{\rm id}}

\newcommand{\FS}{\mathrm{FS}}

\newcommand{\Vol}{\mathrm{Vol}}

\newcommand{\lct}{\mathrm{lct}}
\newcommand{\glct}{\mathrm{glct}}

\newcommand{\CC}{\mathbb {C}}

\newcommand{\MM}{{\mathbb M}}
\newcommand{\NN}{{\mathbb N}}
\newcommand{\PP}{{\mathbb P}}
\newcommand{\QQ}{{\mathbb Q}}
\newcommand{\RR}{{\mathbb R}}
\newcommand{\ZZ}{{\mathbb Z}}

\DeclareMathOperator{\Int}{int}
\DeclareMathOperator{\Aut}{Aut}
\DeclareMathOperator{\Ver}{Ver}
\DeclareMathOperator{\Conv}{co}
\DeclareMathOperator{\co}{co}
\DeclareMathOperator{\Span}{span}

\theoremstyle{plain}
\newtheorem{theorem}{Theorem}[section]
\newtheorem{proposition}[theorem]{Proposition}
\newtheorem{prop}[theorem]{Proposition}

\newtheorem{lemma}[theorem]{Lemma}

\newtheorem{claim}[theorem]{Claim}
\newtheorem{corollary}[theorem]{Corollary}

\newtheorem{conjecture}[theorem]{Conjecture}

\newtheorem{problem}[theorem]{Problem}

\theoremstyle{definition}
\newtheorem{definition}[theorem]{Definition}
\newtheorem{remark}[theorem]{Remark}
\newtheorem{example}[theorem]{Example}

\def\o{\omega}
\def\K{K\"ahler }
\def\KE{K\"ahler--Einstein }
\newcommand{\beq}{\begin{equation}}
\newcommand{\eeq}{\end{equation}}
\newcommand{\bpf}{\begin{proof}}
\newcommand{\epf}{\end{proof}}
\newcommand{\bdefn}{\begin{definition}}
\newcommand{\edefn}{\end{definition}}
\newcommand{\bremark}{\begin{remark}}
\newcommand{\eremark}{\end{remark}}
\newcommand{\bconj}{\begin{conjecture}}
\newcommand{\econj}{\end{conjecture}}
\newcommand{\bcor}{\begin{corollary}}
\newcommand{\ecor}{\end{corollary}}
\newcommand{\blem}{\begin{lemma}}
\newcommand{\elem}{\end{lemma}}
\newcommand{\bclaim}{\begin{claim}}
\newcommand{\eclaim}{\end{claim}}
\newcommand{\bprob}{\begin{problem}}
\newcommand{\eprob}{\end{problem}}
\newcommand{\bprop}{\begin{proposition}}
\newcommand{\eprop}{\end{proposition}}
\newcommand{\bthm}{\begin{theorem}}
\newcommand{\ethm}{\end{theorem}}
\def\lb#1{\label{#1}}
\def\ra{\rightarrow}
\def\ran{\rangle}
\def\lan{\langle}
\def\q{\quad}
\def\vp{\varphi}
\def\i{\sqrt{-1}}
\def\ddbar{\partial\overline{\partial}}
\def\disp{\displaystyle}
\def\texts{\textstyle}
\def\eps{\epsilon}

\title{Tian's stabilization problem for toric Fanos}
\author{Chenzi Jin, Yanir A. Rubinstein }
\date{25 March 2024}

\begin{document}

\maketitle

\centerline{\it In memory of Eugenio Calabi}

\bigskip

\begin{abstract}
In 1988, Tian posed the stabilization problem for equivariant global log canonical
thresholds. We solve it in the case of toric Fano manifolds. This is 
the first general result on Tian's problem.
A key new estimate involves expressing complex singularity exponents
associated to orbits of a group action in terms of support and gauge functions from
convex geometry.
These techniques also yield a resolution of another conjecture of Tian from 2012
on more general thresholds associated to Grassmannians of plurianticanonical
series. 
\end{abstract}

\tableofcontents

\section{Introduction}

This article is the first in a series in which we study asymptotics of 
invariants related to 
existence of canonical metrics on \K manifolds. 

In the present article we focus on Tian's
$\alpha_{k,G}$- and $\alpha_{k,m,G}$-invariants
that were defined 
in 1988 and 1991, and to a large extent are still rather mysterious.
The presence of the compact symmetry group $G$ is a major source of difficulty and new ideas are needed
here as these invariants have not been previously systematically
studied or computed. 
In particular, we provide a formula for such invariants, valid for all toric Fano manifolds, leading to a resolution of Tian's
stabilization conjecture in this setting, which is also
the first general result on Tian's conjecture.

A sequel to this article deals with the 
$\delta_k$-invariants of Fujita--Odaka
(also
called $k$-th stability thresholds)
on toric Fano manifolds.
It turns out that for these invariants
there is a quantitative dichotomy regarding stabilization, and when stabilization
fails we derive their complete asymptotic expansion.

\subsection{Tian's stabilization problem}
\lb{TianStabilizationSubSec}

Let $(X,L,\o)$ be a polarized \K manifold of dimension $n$ with $L$ 
a very ample line bundle over $X$, and $\omega$ a \K form
representing $c_1(L)$.
The space 
\beq
\lb{HLEq}
\cH_L:=
\{\vp \,\,:\,\, \omega_\vp:=\o+\i\ddbar\vp>0\}\subset C^\infty(X)
\eeq
of \K potentials of metrics cohomologous to $\omega$ 
was introduced by Calabi in a short visionary talk in the Joint AMS--MAA Annual Meetings held
at Johns Hopkins University in December, 1953 \cite{Calabi}.
In a groundbreaking article
some 35 years later, Tian proved
(motivated by a question of Yau \cite[p. 139]{Yau2})
that $\cH_L$ is approximated
(or ``quantized")
in the $C^2$ sense 
by the finite-dimensional
spaces 
$
\cH_k
$
consisting of pull-backs of Fubini--Study metrics on $\PP(H^0(X,L^k)^*)$ 
under all possible Kodaira embeddings induced by $H^0(X,L^k)$
\cite{Tian90}.
A decade later this was improved to a complete asymptotic expansion
\cite{Catlin,Zel}
and so the $\cH_k$ can be considered as the Taylor 
(or Fourier, depending on the point of view) expansion of $\cH_L$.
The theme  
that holomorphic
and other 
invariants associated to $X$ and $\cH$
may be quantized using the 
spaces $\cH_k$
has dominated K\"ahler geometry for the last 
35 years.

In the 1980's, Futaki's invariant
was a new obstruction for the existence of \KE metrics, but
there were no invariants that guaranteed existence. At best,
there were constructions that utilized symmetry to reduce the \KE equation to a simpler equation that could be solved and lead to specific examples
(another theme pioneered by Calabi). 
Given a maximal compact subgroup $G$ of the automorphism
group $\Aut X$, Tian introduced
the invariant
$$
\alpha_G:=\sup
\left\{c>0\,:\,\sup_{\varphi\in \cH^G}\int_Xe^{-c(\varphi-\sup\varphi)}\omega^n<\infty\right\}
$$
(where $\cH^G$ denotes the $G$-invariant elements of $\cH_L$),
and its quantized version (Definition \ref{alphakGhmuDef})
$$
\alpha_{k,G},
$$
computed over the $G$-invariant elements of $\cH_k$,
and obtained the sufficient condition  
\beq
\lb{TianBoundEq}
\alpha_G>\frac n{n+1}
\eeq
for the existence of a \KE metric
when $L=-K_X$ (which we will henceforth assume unless otherwise stated)
\cite[Theorem 4.1]{Tian87}.
Initially, the main interest in 
the invariants $\alpha_G$ was as
the first systematic tool for constructing \KE metrics on Fano
manifolds, but later it was also conjectured by Cheltsov
and established by Demailly that $\alpha_G$ actually coincides
with the $G$-equivariant global log canonical threshold from
algebraic geometry \cite{CS08}. 
Since $\cH_k\subset\cH_L$ it follows that  
$\alpha_G\le\inf_k\alpha_{k,G}$ \cite[p. 128]{Tian90},
yet this does not help obtain \eqref{TianBoundEq}.
Instead, Tian posed the following difficult question that would reduce the computation of 
the invariant $\alpha_G$ from the infinite-dimensional space $\cH_L$
to a finite-dimensional one $\cH_k$, and establish
a highly non-trivial relation between the different $\cH_k$'s
\cite[Question 1]{Tian90},\cite{Tian88}:

\bprob
\lb{TianProb}
Let $X$  be Fano and $L=-K_X$, and let $G$
be a maximal compact subgroup $G\subset\Aut X$.
Is $\alpha_{k,G}=\alpha_G$ for all 
sufficiently large $k\in\NN$?
\eprob

It is interesting to note that
a slight variation
of Tian's $\alpha$- and $\alpha_k$-invariants
turned out to lead about three decades
later \cite{Zhang,RTZ} to the very closely
related $\delta$- and $\delta_k$-invariants
of Fujita--Odaka \cite{FO} that
are in turn a slight (and ingenius) variation on global log canonical thresholds,
and turn out to
essentially characterize
the existence of \KE metrics. 
We return to these invariants in a sequel \cite{JR-delta-toric}.

\subsection{A Demailly type identity
in the presence
of symmetry}

Another (easier) problem motivating this article concerns the by-now-classical
relation between Tian's (holomorphic) invariants and 
the (algebraic) global log canonical thresholds. The relationship
was first conjectured by Cheltsov and proved by Demailly and Shi \cite{CS08,Shi10}.
However, so far, this relationship has only been shown for 
the $\alpha$- and $\alpha_k$-invariants, or for the $\alpha_G$-invariant
(see \cite[Theorem A.3]{CS08}, \cite[Proposition 2.1]{Shi10}, \cite[(A.1)]{CS08}, respectively), 
and not for the more subtle invariants $\alpha_{k,G}$.
Inspired by Demailly, we introduce
(Definition \ref{glctkDef})
the $k$-th $G$-equivariant global
log canonical threshold
$$
\glct_{k,G}
$$
as an algebraic counterpart of Tian's
$\alpha_{k,G}$ 
(Definition \ref{alphakGhmuDef}).
A natural question is:

\bprob
\lb{DemaillyProb}
Let $X$  be Fano and $L=-K_X$, and let $G$
be a compact subgroup $G\subset\Aut X$.
Is
$
\glct_{k,G}
=\alpha_{k,G}?
$
\eprob

\subsection{Results}
\lb{ResultsSubSec}

In this article, we resolve both Problems \ref{TianProb} and \ref{DemaillyProb} 
in the toric setting. 
It is perhaps not well-known, but Calabi was interested in toric geometry
and computed certain geodesics in $\cH_L$ in the toric setting, although
he never published the result \cite{Calabi-abstract}.

As standard, we allow the slightly more flexible situation
of any (and not just a maximal) compact subgroup of the normalizer
$$N((\CC^*)^n)$$
of the complex torus $(\CC^*)^n$ in $\Aut X$.
We refer the reader to \S\ref{ToricSec} where these and other 
toric notation is set-up carefully.
Denote 
by 
\beq
\lb{AutPEq}
\Aut P\subseteq GL(M)\cong GL(n,\ZZ)
\eeq
the subgroup of the automorphism group of the lattice $M$ that leaves the polytope $P$ \eqref{PQcircDef} invariant.
It is necessarily a finite group (see \S\ref{ToricSec}).
In fact, $\Aut P$ is the quotient of the normalizer $N((\CC^*)^n)$ of the
complex torus $(\CC^*)^n$ in $\Aut X$ by $(\CC^*)^n$, so that 
$N((\CC^*)^n)$  consists
of finitely many components each isomorphic to a complex torus \cite[Proposition 3.1]{BS99}.
For $H\subset\Aut P$, 
let 
\beq
\lb{GHEq}
G(H):=H\ltimes(S^1)^n\subset N((\CC^*)^n)\subset\Aut X
\eeq
denote the compact group generated by $H$ and $(S^1)^n$
(the latter is the maximal compact subgroup of the complex torus $(\CC^*)^n$).

Our first result resolves Problem \ref{DemaillyProb}
in this generality.

\bprop
\lb{DemaillyProblemProp}
Let $X$  be toric Fano and $L=-K_X$. Let $P\subset M_\RR$   
(see (\ref{NRMREq}), (\ref{PQcircDef})) be the polytope associated to $(X,-K_X)$, 
let
    $H\subset\Aut P$, and let $G(H)$ be as in \eqref{GHEq}.
Then
$
\glct_{k,G(H)}
=\alpha_{k,G(H)}.
$
\eprop

Using this result, and several new estimates, we can resolve Tian's 
Problem \ref{TianProb} in the toric setting in a surprisingly strong sense,
showing that equality holds
for {\it all} $k\in\NN$. We also allow for all groups $G(H)$
(and not just the maximal toric one $G(\Aut P)$).

To state the precise result we introduce some more notation. 
For $H\subset\Aut P$, denote by 
\beq\lb{PHEq}
P^H:=\big\{y\in P\,:\, h.y=y, \quad \forall h\in H\big\}\subset P\subset M_\RR
\eeq
the fixed-point set of $H$ in $P$, and
let 
\begin{equation}\label{definition of p}
    \pi_H:=\frac{1}{|H|}\sum_{\eta\in H}\eta\in
    \hbox{End}(M_\QQ)
\end{equation}
be the map that takes a point in $M_\RR$ to the average of its $H$-orbit. 
Note that $\pi_H$ is a projection map (see \S\ref{GroupOrbSubSec} for details).
 
\begin{theorem}\label{alpha_k,Gformula}
    Let $X$ be toric Fano associated to a fan $\Delta$ whose
    rays are generated by primitive elements $v_i$ in the lattice $N$ dual to $M$. 
    Let $P\subset M_\RR$ (see (\ref{NRMREq}), (\ref{PQcircDef})) be 
the polytope associated to $(X,-K_X)$, 
let
    $H\subset\Aut P$, and let $G(H)$ be as in \eqref{GHEq}.
Then
    for any $k\in\NN$, 
    \begin{align}
        \alpha_{k,G(H)}&=\sup\left\{c\in(0,1)\,:\,-\frac{c}{1-c}P^H\subset P\right\}\nonumber\\
        &=\min_{u\in\Ver P^H}\frac{1}{
        \max_i\langle u,v_i\rangle+1}    \lb{MainTheoremEq}\\
        &=\min_{u\in \pi_H(\Ver P)}\frac{1}{\max_i\langle u,v_i\rangle+1},\nonumber
    \end{align}
     where $P^H$ and $\pi_H$ are defined in 
     \eqref{PHEq}--\eqref{definition of p} and $\Ver(\,\cdot\,)$ denotes the vertex set of a polytope. 
    In particular, $\alpha_{k,G(H)}$ is independent of $k\in\NN$ and is equal to $\alpha_{G(H)}$.
\end{theorem}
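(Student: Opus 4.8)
The plan is to compute the toric integral $\int_X e^{-c(\varphi-\sup\varphi)}\omega^n$ for a $G(H)$-invariant plurianticanonical potential and read off the supremal $c$ for which it stays finite, then match this against the stated combinatorial quantities. Recall that in toric coordinates $x_i=\log|z_i|^2$ on the open orbit, a $G(H)$-invariant K\"ahler potential $\varphi\in\cH_k$ corresponds (via the Legendre transform / symplectic picture) to a convex function on $M_\RR$ whose behaviour at infinity is controlled by $P$, and the extra $H$-invariance forces the relevant singularity to occur only along the subspace spanned by $P^H$. The first step is therefore to reduce, using Proposition~\ref{DemaillyProblemProp}, to computing $\glct_{k,G(H)}$ algebraically: a $G(H)$-invariant section of $-kK_X$ is a sum of monomials $z^m$ with $m\in kP\cap M$, and $H$-invariance of the associated divisor means the Newton polytope is $H$-stable. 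The log canonical threshold of such a divisor at the torus-fixed point corresponding to a maximal cone, together with the $(S^1)^n$-averaging, is governed by how far one can dilate the fixed locus inside $P$; this is exactly the content of the first equality in \eqref{MainTheoremEq}.

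Next I would make the convex-geometric identity precise. Writing $c_0:=\sup\{c\in(0,1):-\tfrac{c}{1-c}P^H\subset P\}$, the condition $-\tfrac{c}{1-c}P^H\subset P$ is equivalent, by convexity and the fact that $P^H=\Conv(\Ver P^H)$ (itself a consequence of $P^H$ being the fixed polytope of a finite linear group acting on $P$), to the inequality $-\tfrac{c}{1-c}\langle u,v_i\rangle\le 1$ for every vertex $u$ of $P^H$ and every ray generator $v_i$, i.e. $\tfrac{c}{1-c}\le \tfrac1{\max_i\langle u,v_i\rangle}$ whenever $\max_i\langle u,v_i\rangle>0$. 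Solving the scalar inequality $c/(1-c)\le t$ gives $c\le t/(t+1)=1/(1/t+1)$, which rearranges to $c\le 1/(\max_i\langle u,v_i\rangle+1)$; minimising over $u\in\Ver P^H$ yields the second equality. For the third equality I would show $\pi_H(\Ver P)$ and $\Ver P^H$ have the same minimum of the functional $u\mapsto 1/(\max_i\langle u,v_i\rangle+1)$: each $\pi_H(v)$ for $v\in\Ver P$ lies in $P^H$ (since $\pi_H$ is the $H$-averaging projection and $P$ is $H$-invariant and convex), and conversely every vertex of $P^H$ is a convex combination of such averaged vertices, so by convexity of $u\mapsto\max_i\langle u,v_i\rangle$ the minimum over the larger generating set $\pi_H(\Ver P)$ is not smaller, while containment $\pi_H(\Ver P)\subset P^H$ gives it is not larger.

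The final clause---independence of $k$ and equality with $\alpha_{G(H)}$---then follows formally: the right-hand side of \eqref{MainTheoremEq} does not mention $k$, so $\alpha_{k,G(H)}$ is constant in $k$; combined with the general inequality $\alpha_{G(H)}\le\inf_k\alpha_{k,G(H)}$ quoted in the introduction, it remains only to produce the matching upper bound $\alpha_{G(H)}\le c_0$, which one gets by exhibiting an explicit $G(H)$-invariant potential in $\cH_L$ (e.g. a toric potential whose convex conjugate has a corner modelled on $P^H$) for which the integral diverges once $c>c_0$, or alternatively by invoking the known toric formula for $\alpha_{G(H)}$ itself. The main obstacle I anticipate is the first step: correctly identifying, in the equivariant setting, the \emph{locus} at which the complex singularity exponent of a $G(H)$-invariant plurianticanonical section is minimised---showing that $H$-invariance collapses the worst-case analysis onto the fixed polytope $P^H$ rather than all of $P$---and making the passage between the holomorphic integrability threshold and the combinatorial dilation condition rigorous uniformly in $k$; the convex-geometry bookkeeping in the latter two paragraphs is routine by comparison.
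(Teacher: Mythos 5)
Your outline correctly identifies the overall shape of the argument: reduce to $\glct_{k,G(H)}$ via Proposition~\ref{DemaillyProblemProp}, translate the dilation condition $-\tfrac{c}{1-c}P^H\subset P$ into the vertex formula, and use \eqref{Vert P^H} together with convexity (Lemma~\ref{ConvMaxAttaindVertexLem}) to pass between $\Ver P^H$ and $\pi_H(\Ver P)$. That bookkeeping is fine, modulo a sign slip ($\tfrac{c}{1-c}\langle u,v_i\rangle\le 1$, not $-\tfrac{c}{1-c}\langle u,v_i\rangle\le 1$).

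The genuine gap is precisely the step you flag as the ``main obstacle,'' and it is not a minor one --- it is the technical core of the theorem. You assert that the equivariant threshold is ``governed by how far one can dilate the fixed locus inside $P$'' but give no argument. What is actually needed, and what the paper proves, is a chain of three results: (1) a structural result (Lemma~\ref{HkGHtoricLem} / Proposition~\ref{analytic alpha_k,G}) showing that the $G(H)$-invariant linear systems are exactly the ones spanned by $H$-orbits $O_i^{(k)}$ of lattice points, so that $\alpha_{k,G(H)}=\min_i c_k(O_i^{(k)})$; (2) a computation of the complex singularity exponent $c_k(\cF)$ of an arbitrary monomial collection $\cF$ in terms of support functions (Propositions~\ref{generalizeSong} and \ref{Orbit lct}), giving $c_k(\cF)=\sup\{c:P\cap(-\tfrac{c}{1-c}\co\cF)\ne\emptyset\}$ \emph{uniformly in $k$}; and (3) the collapse lemma (Lemma~\ref{Orbit lct simplified}) that an $H$-invariant convex set $\cF\subset\pi_H^{-1}(u_0)$ meets $P$ if and only if the average $u_0=\pi_H(\cF)$ lies in $P$, which is what lets you replace $\co O_i^{(k)}$ by the single point $o_i^{(k)}=\pi_H(O_i^{(k)})$ and, after sweeping over orbits, by $P^H$. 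Without (2) and (3) there is no route from ``orbit of a monomial'' to ``point of $P^H$,'' and in particular no reason the answer should be $k$-independent --- which is the whole content of Tian's stabilization problem.

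There is also a directional error in your final paragraph. You write that after establishing $\alpha_{k,G(H)}=c_0$ and recalling $\alpha_{G(H)}\le\inf_k\alpha_{k,G(H)}$, ``it remains only to produce the matching upper bound $\alpha_{G(H)}\le c_0$.'' That inequality is already granted; what is missing is the lower bound $\alpha_{G(H)}\ge c_0$. Exhibiting a potential whose integral diverges for $c>c_0$ would only re-prove $\alpha_{G(H)}\le c_0$. The paper closes the gap by Demailly's identity \eqref{DemaillyA1Eq} together with Proposition~\ref{DemaillyProblemProp} (see Corollary~\ref{alpha_G corollary}), which gives $\alpha_{G(H)}=\inf_k\glct_{k,G(H)}=\inf_k\alpha_{k,G(H)}=c_0$; invoking ``the known toric formula for $\alpha_{G(H)}$'' as an alternative would restrict you to $H=\Aut P$ (Song's case) and would not cover arbitrary $H\subset\Aut P$.
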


There are a few new ingredients in the proof of Theorem \ref{alpha_k,Gformula}. The first is  a useful formula for the spaces $\cH_{k}^{G(H)}$ in terms
of the $H$-orbits of the finite group action (Lemma 
\ref{HkGHtoricLem}). This together with a trick that amounts
to estimating the singularities associated to a basis of sections
in terms of the finite group action orbit of a section yields
a useful formula for $\alpha_{k,G(H)}$ 
(Proposition \ref{analytic alpha_k,G}) as well as 
the equality $\alpha_{k,G(H)}=\glct_{k,G(H)}$, i.e.,
a solution to Problem \ref{DemaillyProb} 
(Proposition \ref{DemaillyProblemProp}).
These then yield a corresponding useful formula for
$\alpha_{G(H)}$ (Corollary \ref{alpha orbit exp cor}). 
One may prove using the results of 
\S\ref{NaturalvolumeformSec}--\S\ref{DemaillySec}
that $\alpha_{k,G(H)}\ge \alpha_{k\ell,G(H)}$  
for any fixed $k$ and all $\ell\in\NN$ (Proposition \ref{weakmonotoneProp}).
In Proposition \ref{divisibleProp} it is shown that there is a special $k_0$
for which $\alpha_{k_0\ell,G(H)}=\alpha_{G(H)}$ for all $\ell\in\NN$, as well as 
observed that this does not seem to imply Tian's conjecture (Remark
\ref{DivisibleRemark}). 
Finally, key new estimates occur in \S\ref{OrbitsEstimateSec}. First, we show that rather general
complex singularity exponents associated to collections of toric monomials are independent of $k$ (Proposition \ref{generalizeSong}).
The proof of this uses a new observation about the
relation between the support functions of collections of lattice points associated to the toric monomials and complex singularity exponents. We then apply this to our $G(H)$-invariant setting, using
the aforementioned expression of $\cH_k^{G(H)}$ and a reduction lemma
to the $H$-invariant subspace (Lemma \ref{Orbit lct simplified}),
to conclude the proof of Theorem \ref{alpha_k,Gformula}.

It is perhaps of some interest to include here a rather immediate application
of this circle of ideas to a slightly more technical set of invariants, 
also introduced by Tian, that we call Tian's Grassmannian $\alpha$-invariants.
These invariants are defined a little differently, algebraically,
and are denoted $\alpha_{k,m}$ or $\alpha_{k,m,G}$  (Definition \ref{alphakmGDef}).
At least in the non-equivariant setting (as well as in the torus-equivariant setting, see Remark \ref{GenAlphakmRem}) these can be considered as generalizations of the 
$\alpha_k$ as $\alpha_k=\alpha_{k,1}$ \cite{Shi10,CS08}. The invariants
$\alpha_{k,2}$ were used by Tian implicitly in his proof of Calabi's conjecture
for del Pezzo surfaces \cite[Appendix A]{Tian90} (cf. \cite[Theorem 6.1]{Tian91}) to overcome the most difficult case (of a cubic surface with an Eckardt point) where equality
holds in \eqref{TianBoundEq}, and this was improved by Shi to
$\alpha_{k,2}>2/3=\alpha_{k,1}$ in that case 
\cite[Theorem 1.3]{Shi10},\cite{Cheltsov08}.

Tian also posed a stabilization conjecture for these invariants in 2012 \cite[Conjecture 5.3]{Tian12}:

\bconj
\lb{TianGenConj}
Let $X$ be Fano.
Fix $m\in\NN$. For sufficiently large $k$, $\alpha_{k,m,G}$ is constant.
\econj

We completely resolve Conjecture \ref{TianGenConj} in the toric setting.
Theorem \ref{alpha_k,Gformula} resolved Problem \ref{TianProb}
in the affirmative (corresponding to the case $m=1$ of Conjecture \ref{TianGenConj}).
For $m\ge2$, Conjecture \ref{TianGenConj}  turn out
to be only partially true as determined by a 
novel convex geometric obstruction we introduce:
\beq
\label{starPEq}\tag{$\ast_P$}
\|\,\cdot\,\|_{-P}\Big|_{P\setminus\Ver P} 
<\;
\max_{P}\|\,\cdot\,\|_{-P}.
\eeq
Note that this condition depends only on $P$ (and not on $m,k$).
The condition \eqref{starPEq} means that 
the function $\|\cdot\|_{-P}$ on $P$ achieves its maximum 
only at the vertices of $P$, i.e.,
\beq
\label{starPequivEq}\tag{$\ast_P$}
\hbox{\rm argmax}_{P}\|\,\cdot\,\|_{-P}\subset \Ver P.
\eeq
When \eqref{starPEq} fails the
maximum is  achieved also at some point that is not a vertex of $P$.

\bthm    
\lb{SecondMainThm}
Let $X$ be toric Fano with associated polytope $P$ (\ref{PDef2Eq}). 
Conjecture \ref{TianGenConj} holds if and only if \eqref{starPEq} fails.
More precisely, if \eqref{starPEq} holds,
    \beq
    \lb{TianGenConjFailsEq}
       \alpha_{k,m,\left(S^1\right)^n}>\alpha, \q
           \hbox{for $k\in\NN$ and $m\in\NN\setminus\{1\}$},
    \eeq
otherwise
    \beq
    \lb{TianGenConjHoldsEq}
       \alpha_{k,m,\left(S^1\right)^n}=\alpha, 
       \q 
       \hbox{for $m\in\NN$ and for sufficiently large $k\in\NN$}.
    \eeq
\ethm

Theorem \ref{SecondMainThm} is proven in
\S\ref{ProofThm1.6SubSec} where we also explain the intuition behind it
(see also Examples \ref{alphakmGHExample} and \ref{alphakmSymmExample}).
For now, let us elucidate the condition \eqref{starPEq} a bit.
The level set $\{\|\,\cdot\,\|_{-P}=\lambda\}$ is the dilation 
$\lambda\partial(-P)$, and $\{\|\,\cdot\,\|_{-P}=\max_P\|\cdot\|_{-P}\}$ is the largest dilation that intersects $P$ (by Lemma \ref{norm of set}).
Thus, condition \eqref{starPEq} states that $P$ intersects $\max_P\|\cdot\|_{-P}\partial(-P)$ only at vertices. When \eqref{starPEq} fails, convexity arguments
 show the intersection will contain a positive-dimensional face of $P$.
See Figure \ref{*P example} for two examples. 

\begin{figure}
    \centering
    \begin{subfigure}{0.4\textwidth}
        \centering
        \begin{tikzpicture}
            \draw[->,gray](-2.5,0)--(1.5,0)node[below]{$x$};
            \draw[->,gray](0,-2.5)--(0,1.5)node[left]{$y$};
            \draw(0,0)node[below right]{$0$};
            \draw(-.5,-.5)--(1,-.5)--(-.5,1)--cycle;
            \draw[dashed](1,1)--(-2,1)--(1,-2)--cycle;
        \end{tikzpicture}   
    \end{subfigure}
    \begin{subfigure}{0.4\textwidth}
        \centering
        \begin{tikzpicture}
            \draw[->,gray](-3,0)--(2,0)node[below]{$x$};
            \draw[->,gray](0,-2)--(0,2)node[left]{$y$};
            \draw(0,0)node[below right]{$0$};
            \draw(-.5,-.5)rectangle(1,.5);
            \draw[dashed](-2,-1)rectangle(1,1);
        \end{tikzpicture}    
    \end{subfigure}
    \caption{The polytope $P$ (solid line) and the level set $\{\|\cdot\|_{-P}=\max_P\|\cdot\|_{-P}\}$ (dashed line). For $P=\Conv\{(-1,-1),(2,-1),(-1,2)\}$, the maximum is only attained at the vertices of $P$. In particular, \eqref{starPEq} holds. For $P=[-1,2]\times[-1,-1]$, the maximum is attained on the line segment $\{2\}\times[-1,-1]$. In particular, \eqref{starPEq} does not hold.}\label{*P example}
\end{figure}
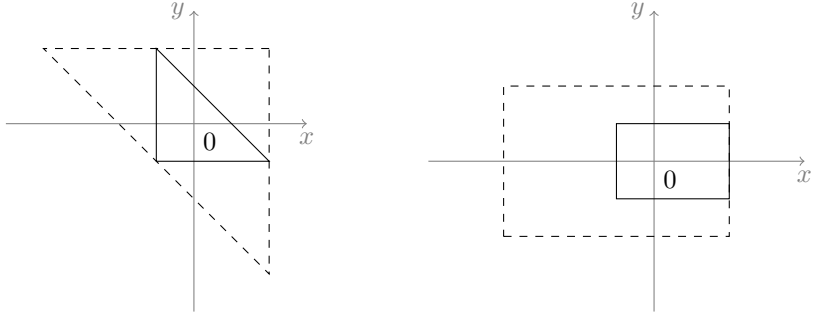

\bigskip
\noindent
{\bf Relation to earlier works.}
Theorem \ref{alpha_k,Gformula} strengthens and clarifies work of Song \cite{Song}
and Li--Zhu \cite{LiZhu}.
Song obtained a formula for $\alpha_G$ 
but not for $\alpha_{k,G}$. 
In particular, 
there seems to be a gap
in the proof
of \cite[Theorem 1.2]{Song}
that claims that $\alpha_G=\alpha_{k,G}$
for all sufficiently large $k$.
This claim relies on proving  
that for some $k_0\in\NN$
and all $\ell\in\NN$, 
$\alpha_G=\alpha_{k_0\ell,G}$
and then invoking that $\alpha_{k,G}$ is eventually monotone in $k$, and
hence must be independent of $k$ for sufficiently large $k$.
Unfortunately, the proof of monotonicity
is omitted from \cite[p. 1257, line 7]{Song}, 
and it appears to be difficult to reproduce. 
It seems that such monotonicity is not currently known (cf.
Remark
\ref{DivisibleRemark}).
Indeed, there is no obvious
relationship between the various $\mathcal H^G_k$ coming from different
Kodaira embeddings.  
As noted above, Song showed (for $H=\Aut P$) that $\alpha_{G(H)}=
\alpha_{k_0\ell,G(H)}$ for  some $k_0\in\NN$ and all $\ell\in\NN$.
Li--Zhu showed the same identity (essentially for $H=\{\id\}$) for a group compactification of a reductive complex Lie group and also claimed,
similarly to Song, that this implies eventual constancy in $k$ in that setting
\cite[Theorem 1.3, p. 233]{LiZhu}. Unfortunately, also they do not provide
a proof of the needed eventual monotonicity or constancy.
In the non-equivariant
setting, and for rather general Fano varieties for which $\alpha\le1$,
Birkar showed the deep result that  $\alpha=\alpha_{k_0\ell}$ for  some $k_0\in\NN$ and all $\ell\in\NN$ \cite[Theorem 1.7]{Birkar2022}. 
However, also this result does not imply Tian stabilization due
to the aforementioned unknown monotonicity. 
Thus, Theorem \ref{alpha_k,Gformula} seems to be the first general result
on Tian's stabilization Problem \ref{TianProb}.

Similarly, Theorem \ref{SecondMainThm} seems to be the first general result
on Conjecture \ref{TianGenConj}. 
Indeed, Li--Zhu showed the same type of result
Song obtained in the $m=1$ setting,
i.e., that $\alpha_{k_0\ell,m,(S^1)^n}=\alpha_{(S^1)^n}$ under a condition
depending on $k_0$ and $m$, and hence
different from our \eqref{starPEq} (with the minor caveat that their
statement as written \cite[Theorem 1.4]{LiZhu} is incorrect, 
though can be easily fixed by replacing ``facet" by ``face", see 
\S\ref{ProofThm1.6SubSec}).
However, again, due to the lack of monotonicity, they do not obtain
a resolution of Conjecture \ref{TianGenConj} though they do obtain 
the first counterexamples to it when $m\ge2$.

Combining Theorem \ref{alpha_k,Gformula} and Demailly's theorem \cite[(A.1)]{CS08}
also recovers 
Song's formula for the $\alpha_{G(\Aut P)}$
(that itself generalized Batyrev--Selivanova's formula that 
$\alpha_{G(\Aut P)}=1$ whenever $P^{\Aut P}=\{0\}$ (recall \eqref{PHEq}) \cite[Theorem 1.1, p. 233]{BS99}).
Cheltsov--Shramov claimed a more general formula for 
$\alpha_{H}$ 
(i.e., without the real torus symmetry included in $G(H)$, recall \eqref{GHEq})
however (as kindly pointed out to us by I. Cheltsov) 
there is an error in the proof of \cite[Lemma 5.1]{CS08}
as the toric degeneration used there need not respect the $H$-invariance. 
Further generalizations of Song's formula for $\alpha_G$ to general polarizations and group compactifications
are due to Delcroix \cite{Delcroix15,Delcroix17}
and Li--Shi--Yao
\cite{LSY15}
and our methods should generalize
to those settings as well as to the setting of log toric Fano pairs and edge singularities
\cite[\S6--7]{CR15}. 

Finally, it is also worth mentioning that Tian also posed more general
conjectures \cite[Conjecture 5.4]{Tian12}
for general polarizations (i.e., $L$ not being $-K_X$) 
for which there are already some counterexamples \cite{ACS}.

\bigskip
\noindent
{\bf Organization.}
Section \ref{ToricSec} sets up the necessary notation concerning
toric varieties and convex analysis. Section
\ref{NaturalvolumeformSec}
constructs natural equivariant reference Hermitian metrics and volume forms.
Proposition \ref{DemaillyProblemProp} is proved in \S\ref{DemaillyProbSubSec}.
Section \ref{DivisibleSec} explains a trick that allows to 
deal with divisible $k\in\NN$ but also highlights the difficulties
in dealing with general $k$.
Theorem \ref{alpha_k,Gformula} is proved in \S\ref{OrbitsEstimateSec}.
Theorem \ref{SecondMainThm} is proved in \S\ref{alphakmSec}.
We conclude with examples in \S\ref{ExamplesSec}.

\bigskip
\noindent
{\bf Acknowledgments.} 
Thanks to C. Birkar, H. Blum, I. Cheltsov, and K. Zhang for helpful references.
Research supported in part 
by NSF grants DMS-1906370,2204347, BSF
grant 2020329, and an
Ann G. Wylie Dissertation Fellowship.

\section{Toric and convex analysis set-up}
\lb{ToricSec}

\subsection{Notions from convexity}

Consider an $n$-dimensional real vector space $V\cong\RR^n$
and let $V^*\cong\RR^n$ denote its dual with the pairing denoted
by $\lan\,\cdot\,,\,\cdot\,\ran$.
Given a set $A\subset V$, denote by   
$$
A^\circ=\{y\in V^*\,:\, \langle x,y\rangle\le1,
\,\, \forall x\in A\}
$$
the polar of $A$ \cite[p. 125]{Rock}, and by
\beq
\lb{coA}
\co A
\eeq
the convex hull of $A$ \cite[p. 12]{Rock}. For a finite set \cite[Theorem 2.3]{Rock},
\beq
\lb{coAFiniteSet}
\co \{p_1,\ldots,p_\ell\}=
\Big\{\texts
\sum_{i=1}^\ell\lambda_ip_i\,:\,  \sum_{i=1}^\ell\lambda_i=1,
\; \lambda\in[0,1]^\ell
\Big\}.
\eeq

Also, set
$$
-K:=\{-x\,:\, x\in K\}.
$$
Note $(-K)^\circ=-K^\circ$. Also, $K^\circ=(\co K)^\circ\subset V^*$ whenever $K\subset V$.
The polar can also be described via the support function
$h_K:V^*\ra \RR$,
\beq
\lb{hKEq}
h_K(y):=\sup_{x\in K}\langle x,y\rangle, \quad y\in V^*\cong\RR^n,
\eeq
by
$K^\circ=\{h_K\le 1\}$.

A dual notion to the support function is the near-norm function
    \begin{equation}\label{normdef}
        \left\|x\right\|_K:=\inf\left\{t\geq0\,:\,x\in tK\right\},
    \end{equation}
associated to any compact convex set $K$ with $0\in\Int K$.
    Note that \cite[Corollary 14.5]{Rock}
\beq
\lb{hKpolarEq}
\left\|\,\cdot\,\right\|_K=h_{K^\circ}.
\eeq
Note that $\left\|\,\cdot\,\right\|_K$ is a norm when $K$
is centrally symmetric (i.e., $K=-K$), otherwise it is only a near-norm
in the sense that it satisfies all the properties of a norm
but is only \hbox{$\RR_{+}$-homogeneous:} $\left\|\lambda x\right\|_K=\left\|x\right\|_K$
for $\lambda\in\RR_+$ (and not fully $\RR$-homogeneous).
    The infimum in \eqref{normdef} is achieved:  for a minimizing sequence
    $\{t_i\}$, $\frac{x}{t_i}\in K$ for any $i$, so 
\beq
\lb{infachievedEq}
x\in\|x\|_KK
\eeq
since $K$ is closed.

\blem
    Let $V$ be an $\RR$-vector space. Consider the polytope
    $$
        A=\bigcap_{j=1}^d\left\{x\in V\,:\,\left\langle x,v_j\right\rangle\leq1\right\},
    $$
    where $v_j\in V^*$. Then,
    $$
        \left\|x\right\|_A=\max_{1\leq j\leq d}\left\langle x,v_j\right\rangle.
    $$
\elem
\begin{proof}
    Notice that $x\in tA$ if and only if for any $1\leq j\leq d$, $\langle x,v_j\rangle\leq t$. Thus,
    $$
        \left\|x\right\|_A=\inf\left\{t\geq0\,:\,\max_{1\leq j\leq d}\left\langle x,v_j\right\rangle\leq t\right\}=\max_{1\leq j\leq d}\left\langle x,v_j\right\rangle.
    $$
Alternatively, observe that 
$A=\{v_1,\ldots,v_d\}^\circ$ and 
$A^\circ=(\{v_1,\ldots,v_d\}^\circ)^\circ=\co\{v_1,\ldots,v_d\}$
\cite[Theorem 14.5]{Rock} and then use \eqref{hcAcocAEq} and \eqref{hKpolarEq}.
\end{proof}

\subsection{Toric algebra}

Consider a lattice of rank $n$ and its dual lattice 
\beq
\lb{NMHomEq}
N, \quad
M:=N^*:=
\mathrm{Hom}(N,\ZZ).
\eeq
Both $N$ and $M$ are isomorphic to $\ZZ^n$ but we do not specify
the isomorphism (see, e.g., the proof of Lemma \ref{PIntegralLem}
for this point). The notation
is useful as it serves to distinguish
between objects living in one
lattice and its dual (although of
course in computations
we simply work on $\ZZ^n$, see \S\ref{ExamplesSec}).
Denote the corresponding 
$\RR$-vector space and its dual (both isomorphic to $\RR^n$)
\beq
\lb{NRMREq}
N_\RR:=
N\otimes_\ZZ\RR,
\quad
M_\RR:=M\otimes_\ZZ\RR=N_\RR^*. 
\eeq
A rational convex polyhedral cone in $N_\RR$ takes the form
\beq
\lb{conefirstEq}
\sigma=\sigma(v_1,\ldots,v_d):=
\Big\{
{\textstyle\sum}_{i=1}^da_iv_i\,:\, a_i\ge0, v_i\in N
\Big\}.
\eeq
The rays $\RR_{+}v_i, i\in\{1,\ldots,d\}$ are called the generators
of the cone \cite[p. 9]{Ful}. 
They are (1-dimensional) cones themselves, of course.
Our convention will be that the 
\beq
\lb{conventionprimitiveEq}
v_i, \quad i\in\{1,\ldots,d\}, \q
\hbox{are primitive elements 
of the lattice $N$},
\eeq
which means there is no $m\in \NN\setminus\{1\}$ such that 
$v_i/m\in N$.
A cone is called strongly convex if
$\sigma\cap -\sigma =\{0\}$ \cite[p. 14]{Ful}.
A face of $\sigma$ is any intersection of $\sigma$
with a supporting hyperplane.

\bdefn
\label{FanDefn}
A {\it fan} $\Delta=\{\sigma_i\}_{i=1}^\delta$ in $N$ is a finite set of rational strongly convex polyhedral
cones $\sigma_i$ in $N_\RR$ such that: 

\smallskip
\noindent 
(i) each face of a cone in $\Delta$ is also (a cone) in $\Delta$,

\smallskip
\noindent
(ii) the intersection of two cones in $\Delta$ is a face of each. 
\edefn

Such a fan gives rise to a toric variety $X(\Delta)$: 
each cone $\sigma_i$ in $\Delta$ gives rise to an affine toric variety \cite[\S1.3]{Ful},
that serves as (a Zariski open) chart in $X(\Delta)$ with the transition
between the charts constructed by (i) and (ii) above \cite[p. 21]{Ful}.
For instance, the zero cone corresponds to the open dense orbit
$(\CC^*)^n$ \cite[p. 64]{Cannas}, and more generally there is a
bijection between the cones $\{\sigma_i\}_{i=1}^\delta$ and the orbits
of the complex torus $(\CC^*)^n$ in $X(\Delta)$
\cite[Proposition 5.6.2]{Cannas}, with 
the non-zero cones corresponding precisely to 
all the toric subvarieties of $X(\Delta)$ of positive codimension.

When $X(\Delta)$ is a smooth
toric Fano variety (as we always assume), the fan $\Delta$
must arise from 
an integral polytope as 
follows (but in general, i.e., for 
singular toric varieties, this need not be the case \cite[p. 25]{Ful}).
Let $\Delta$ be a fan such that
$X(\Delta)$ is smooth Fano and let $\sigma_1,\ldots,\sigma_d$ be its 1-dimensional cones
(i.e., rays) generated by primitive generators 
\beq
\lb{Delta1Eq}
\Delta_1:=
\{v_1,\ldots,v_d\}\subset N,
\eeq
so $\sigma_i=\RR_{+}v_i$, and set (recall \eqref{coA})
\beq
\lb{QEq}
Q:=\co\Delta_1=\co\{v_1,\ldots,v_d\}=\co\Delta_1\subset N_\RR.
\eeq
Then $\Delta$ is equal to the 
collection of 
cones over each face of $Q$ plus the zero cone
\cite[p. 26]{Ful}, in other words if $F\subset Q$
is a face, then 
\beq
\lb{sigmaFEq}
\sigma_F:=\{rx\in N_\RR\,:\, r\ge0, x\in F\}
\eeq
is the union of all rays
through $F$ and the origin, and
$$
\Delta=\{\sigma_F\}_{F\subset Q}.
$$
Denote by
\beq
\lb{VerAEq}
\Ver A
\eeq
the vertices of a polytope $A$.
Note that $\Ver F\subset \Ver Q=\Delta_1$, and by \eqref{conefirstEq}--\eqref{conventionprimitiveEq},
\beq
\lb{sigmaFVerFEq}
\sigma_F=\sigma(\Ver F).
\eeq
Smoothness of $X$ means that the generators of $\sigma_F$
form a $\ZZ$-basis for $N$ 
\cite[p. 29]{Ful}. By \eqref{sigmaFVerFEq} this means 
\beq
\lb{smoothnessverticesEq}
\hbox{the
vertices of $F$ form a $\ZZ$-basis for $N$ (for any facet $F\subset Q$).}
\eeq
Thus each facet $F$ of $Q$ is an 
$(n-1)$-simplex whose vertices form a $\ZZ$-basis of $N$.
In Lemma \ref{PIntegralLem} we show this means the vertices of the polar polytope
belong to the dual lattice $M$.

When $L=-K_X$, there is an $\Aut X$ action on $H^0(X,-kK_X)$
for every $k\in\NN$. To get an induced linear action on $M_\QQ$
we must restrict to the normalizer $N((\CC^*)^n)$ of the complex torus $(\CC^*)^n$
in $\Aut X$. The representation of $(\CC^*)^n$ on $H^0(X,-kK_X)$
splits into 1-dimensional spaces, whose generators are called the monomial
basis. There is a one-to-one correspondence between the monomial basis of 
$H^0(X,-kK_X)$ and points in $kP\cap M$, and the  
quotient $N((\CC^*)^n)/(\CC^*)^n$ is a linear group,
that can be identified with $\Aut P\subset
GL(M)\cong GL(n,\ZZ)$
\eqref{AutPEq}. Since $P$ is defined as the convex hull
of vertices in $M$ it follows that $\Aut P$ is finite. 
Alternatively, this can be seen by observing that 
$N_\RR$ is canonically isomorphic to the quotient of  $(\CC^*)^n$ 
by its
maximal compact subgroup $(S^1)^n$ \cite[p. 229]{BS99}
and the induced action on $M_\RR$ is then defined
by transposing via the pairing.
Conversely, all  
compact subgroups of $N((\CC^*)^n)$ that contain $(S^1)^n$ are generated
by $(S^1)^n$ and a finite subgroup $H$ of $\Aut P$
\cite[Proposition 3.1]{BS99}, and we denote
such a group by $G(H)\subset \Aut X$ as in \eqref{GHEq}. We describe in the proof
of Lemma \ref{hkGinvLem} concretely
how the action of $\Aut P$ is expressed in coordinates.
For a finite group $H$ or finite set $\cA$ we denote by 
$$
|H|, \hbox{\ respectively\ } |\cA|,
$$
its order or cardinality.

Oftentimes we will work with 
\beq
\lb{PQcircDef}
P:=-Q^\circ
=
\{-v_1,\ldots,-v_d\}^\circ=
-\{v_1,\ldots,v_d\}^\circ
\subset M_\RR,
\eeq
as it has the nice geometric property of faces of (real) dimension $k$ 
corresponding to toric subvarieties of dimension $k$,
and since the metric properties (e.g., volume) 
of $P$ correspond to those of $X$. (Moreover, $P$ can also be realized as 
the Delzant (moment) polytope associated to any $(S^1)^n$-invariant \K metric
representing the anticanonical class.) 
In particular,
\beq
\lb{PDef2Eq}
P=\bigcap_{i=1}^d\Big\{y\in M_\RR\,:\, \langle y, -v_i\rangle \le 1\Big\}
=\{h_{-\Delta_1}\le 1\}=\{y\in M_\RR\,:\, \max_j\langle -v_j,y\rangle\le1\},
\eeq
and
irreducible toric divisors correspond to facets of $P$
\beq
\lb{toricDiEq}
D_i:=\{y\in P\,:\, \langle y, -v_i\rangle = 1\}.
\eeq
Note that $P$ contains the origin in its interior.
Also note that \eqref{PQcircDef} is the standard convention since
then lattice points of $P$ correspond to monomials via \eqref{skuEq}.
Batyrev--Selivanova use $-P$ instead.

\blem
\lb{PIntegralLem}
Let $P\subset M_\RR$ be the polytope associated to a smooth toric variety $X$. Then $P$ is an integral lattice polytope, i.e., $\Ver P\subset M$.
\elem
\bpf
By duality, if $u\in M_\RR$ is a vertex of $P$ 
then 
\beq
\lb{FQverPEq}
F=\{v\in Q\,:\,\langle u,-v\rangle=1\}\subset N_\RR
\eeq
is a facet of $Q=-P^\circ\subset N_\RR$.
Since $X$ is smooth, the vertices of $F$ form a $\ZZ$-basis for $N$ 
by \eqref{smoothnessverticesEq}.
Choose coordinates on $N$ associated to this $\ZZ$-basis, i.e., the vertices of $F$ are the standard basis vectors $e_1,\ldots,e_n$. 
Thus, the facet $F=\Conv\{e_1,\ldots,e_n\}$ is the standard $(n-1)$-simplex in $\RR^n$
cut-out by the equation $\langle u,-v\rangle=1$ where $u=(-1,\ldots,-1)\in M$,
as desired.

Alternatively, if one does not wish to choose coordinates but rather
work invariantly, denote by $\Ver F=\{f_1,\ldots,f_n\}\subset N$,
and note $\Span_{\ZZ}\Ver F=N$. 
Thus, any 
$v\in N$ can be written uniquely as $v=\sum_{i=1}^na_if_i$ (with $a_i\in\ZZ$),
and so $u\in M_\RR=\mathrm{Hom}(N,\RR)$ can
be identified (recall \eqref{NMHomEq}) with the map
$$
N\ni v\mapsto -\sum_{i=1}^na_i,
$$
As $\sum_{i=1}^na_i\in \ZZ$, this map actually belongs to 
$\mathrm{Hom}(N,\ZZ)=M$.
\epf

Another useful fact is a sort of maximum principle for convex polytopes,
saying essentially that a convex function on a polytope achieves its maximum at some vertex (regardless of continuity).

\blem
\lb{ConvMaxAttaindVertexLem}
Let $A$ be a convex polytope and $f:A\ra\RR\cup\{\infty\}$ a convex function.
Then:

\noindent
(i) $\sup_A f=\sup_{\Ver A} f$. 

\noindent
(ii) if $f$ is bounded on $\Ver A$
it is bounded on $A$ and its maximum is achieved in a vertex.

\noindent
(iii) if $f$ attains its finite maximum on $\Int A$ it is constant.

\noindent
(iv) if $f$ attains its finite maximum on the relative interior of a face
$F\subset A$ it is constant on $F$.
\elem
\bpf
Write $\Ver A=\{p_1,\ldots,p_\ell\}$ and assume $f(p_1)\leq\cdots\leq f(p_\ell)$.
By convexity, $A=\co\Ver A$, and \eqref{coAFiniteSet} implies that 
any $x\in A$ can be expressed as 
\beq
\lb{lambdasimplexEq}
        x=\sum_{i=1}^\ell\lambda_ip_i, \q \sum_{i=1}^\ell\lambda_i=1, 
        \q \lambda\in[0,1]^\ell.
\eeq
Then    $
        f\left(x\right)\leq\sum_{i=1}^\ell\lambda_if\left(p_i\right)\leq\sum_{i=1}^\ell\lambda_i 
        f\left(p_\ell\right)=f\left(p_\ell\right),
    $
    proving (i) and (ii).

    To see (iii), again express any $x\in A$ using \eqref{lambdasimplexEq}.
Suppose that $x_{\Int} \in\Int A$ achieves the (finite) maximum of $f$.
    Note that $x_{\Int}\in\Int A$ means one has the representation \eqref{lambdasimplexEq} for $x_{\Int}$ for some  $\lambda_{\Int}\in(0,1)^\ell$.
    Choose $\delta=\delta(x)\in(0,1)$ so that 
    $\lambda_{\Int}-\delta\lambda\in\RR_+^\ell$.
    Define
    $$
        \lambda':=\frac{1}{1-\delta}\left(\lambda_{\Int}-\delta\lambda\right)\in\RR_+^\ell.
    $$
Note that $\lambda'$ satisfies $\sum_{i=1}^{\ell}{\lambda'}_i=1$, so
letting $x'=\sum_{i=1}^{\ell}{\lambda'}_ip_i$ we have $x'\in A$ by \eqref{lambdasimplexEq}.
    Also,
    $
        \delta x+\left(1-\delta\right)x'=
        {x}_{\Int}.
    $
Thus, 
$
\max_Af= f(x_{\Int}) 
\le
        \delta f(x)+\left(1-\delta\right)f(x')
        \le\max_A f$,
        forcing equality, i.e., $f(x)=\max_Af$ (since $\delta>0$
        and $\max_Af<\infty$), proving (iii).
The proof of (iv) is identical by working on the polytope $F$.
\epf

Finally, we recall Ehrhart's theorem on the polynomiality of the number of lattice points
in dilations of lattice polytopes
\cite{Ehr67a,Ehr67b}, \cite[Theorem 19.1]{Gru07}. Set
    \beq
    \lb{EPkEq}
        E_P(k):=|kP\cap M|, \q k\in\NN.
    \eeq

\bprop
\label{Ehrhart polynomial}
    Let $M$ be a lattice and $P\subset M_\RR$ be a lattice polytope of dimension $n$. Then,
    $$
        E_P(k)=\sum_{i=0}^na_ik^i, \q \hbox{ for any $k\in\NN$},
    $$
    with $a_n=\Vol(P)$, and
\beq
\lb{MonotonicityErhardtEq}
\hbox{$k\leq k'$}\q\Rightarrow\q E_P\left(k\right)\leq E_P\left(k'\right).    
\eeq
\eprop

\section{A natural equivariant Hermitian metric and volume form}
\lb{NaturalvolumeformSec}
In light of Lemma \ref{DefhmuLem} below it makes sense to choose
a convenient pair $(\mu,h)$ of a volume form and a Hermitian metric. 
In fact, we are free to choose such
a pair for each $k$. The special feature of working with $L=-K_X$
is that in fact a volume form essentially doubles as a Hermitian metric,
which is sometimes a bit confusing to keep track of in terms of
notation, but is quite convenient for computations. 
This section serves to explain this choice $(\mu_k=h_k^{1/k},h_k)$,
see \eqref{-kK_X metric-h_k} and \eqref{canonicalomeganEq},
originally due to Song \cite[Lemma 4.3]{Song}.
We emphasize that 
the Hermitian metric must additionally be chosen $G$-invariant in Definition \ref{alphakGhmuDef}, 
and this is confirmed for $h_k$ in Lemma \ref{hkGinvLem}.

Let $X$ be a toric Fano manifold and $P$ its associated polytope. 
There is a natural basis of the space of holomorphic sections $H^0(X,-kK_X)$ defined by the monimials $z^{ku}$ where $u\in P\cap\frac{1}{k} M$. 
That is, there exists an invariant frame $e$ over the open orbit such that 
\beq
\lb{skuEq}
s_{k,u}(z)=z^{ku}e.
\eeq
What does $e$ actually look like?
This is most naturally
expressed in terms of the {\it monomial basis}. When $k=1$ and $u\in P\cap M$
\cite[\S4.3]{Ful},
$$
s_{1,u}=z^u\prod_{i=1}^nz_i\cdot\partial_{z_1}\wedge\cdots\wedge\partial_{z_n}.
$$
In general, for any $k\in\NN$ and $u\in P\cap \frac1k M$,
\begin{equation}\label{-kK_X sections}
    s_{k,u}=z^{ku}\left(\prod_{i=1}^nz_i\right)^k(\partial_{z_1}\wedge\cdots\wedge\partial_{z_n})^{\otimes k}.
\end{equation}
In other words,
\begin{equation}\label{whatiseEq}
    e=\left(\prod_{i=1}^nz_i\right)^k(\partial_{z_1}\wedge\cdots\wedge\partial_{z_n})^{\otimes k}.
\end{equation}

Next, let us construct a canonical Hermitian metric $h_k$ on $-kK_X$. 
Since $-kK_X$ is very ample \cite[p. 70]{Ful}, it is natural to pull-back the Fubini--Study Hermitian metric via the Kodaira embedding. It turns
out that choosing the Kodaira embedding given by the monomial basis 
\begin{equation}\label{iotakEq}
\iota_k:X\ni z\mapsto [s_{k,u}(z)/e(z)]_{u\in P\cap \frac1k M}=[z^{ku}]_{u\in P\cap \frac1k M}\in \PP^{E_P(k)-1},
\end{equation}
will yield the desired $h_k$; importantly, the resulting $h_k$
will be {\it torus-invariant}, smooth, and essentially transform computations
on $X$ to $P$.
To wit, the Fubini--Study metric on $\cO(1)\rightarrow
\PP^{E_P(k)-1}$ is (where $E_P(k)$ is the number of lattice
points in $kP$)
$$
h_{FS}(Z_i,Z_j):=\frac{Z_i\bar Z_j}{\sum_\ell|Z_\ell|^2},
$$
and we define
\beq
\lb{hkiotaEq}
h_k:=\iota_k^*h_{FS}.
\eeq
Note that each homogeneous coordinate $Z_i\in H^0(\PP^{E_P(k)-1},\cO(1))$ pulls-back
via $\iota_k$ to one of the monomial sections $s_{k,u}$ (which one depends 
on the ordering for the elements of $P\cap \frac1k M$ chosen in \eqref{iotakEq}). 
Thus, to express $h_k$ it suffices to compute it on the monomial basis of $H^0(X,-kK_X)$:
\begin{equation}\label{-kK_X metric}
    h_k(s_{k,u_1},s_{k,u_2})(z)
    =
    h_{FS}\big(s_{k,u_1},s_{k,u_2}\big)(\iota_k(z))
    =
    \frac{z^{ku_1}\overline{z^{ku_2}}}{\sum\limits_{u\in P\cap \frac1k M}|z^{ku}|^2}.
\end{equation}
Comparing \eqref{-kK_X sections} and \eqref{-kK_X metric} means that $h_k$ can be written as
\begin{equation}\label{-kK_X metric-h_k}
h_k=\frac{\left(dz^1\wedge \overline{dz^1}\wedge\cdots\wedge
dz^n\wedge\overline{dz^n}\right)^{\otimes k}}{
\left(\prod\limits_{i=1}^n|z_i|^2\right)^k\sum\limits_{u\in P\cap \frac1k M}\left|z^{ku}\right|^2}.
\end{equation}

In conclusion, $h_k^{1/k}$ is a smooth metric on $-K_X$ ($h_k$ being obtained
as a pull-back of a smooth metric under the Kodaira {\it embedding}), 
hence it is a smooth volume form on $X$.
To express this volume form, 
on the open orbit $(\CC^*)^n=\RR^n\times (S^1)^n$ consider the holomorphic coordinates
\beq
\lb{CncoordwEq}
w_i:=x_i/2+\sqrt{-1}\theta_i=\log z_i\in\CC^n.
\eeq
In these coordinates then, this volume form, on the open orbit, is
    \begin{align}
\label{canonicalomeganEq}
\mu_k:=h_k^\frac{1}{k}=\frac{dx_1\wedge\cdots\wedge dx_n\wedge d\theta_1\wedge\cdots\wedge d\theta_n}{\left(\sum\limits_{u\in P\cap M/k}e^{\langle ku,x\rangle}\right)^{\frac{1}{k}}}.
\end{align}
\blem
\lb{hkGinvLem}
Let $G(H)\subseteq\Aut X$ \eqref{GHEq} be a subgroup generated by $(S^1)^n$ and a subgroup $H$ of $\Aut(P)$. Then $h_k$ \eqref{-kK_X metric-h_k} is $G(H)$-invariant.
\elem
\bpf
From \eqref{canonicalomeganEq} it is evident that $h_k$ is independent
of $(\theta_1,\ldots,\theta_n)$, i.e., it is $(S^1)^n$-invariant.
An automorphism $\sigma\in\Aut P\subseteq GL(M)$
can be represented (via choosing a basis for the lattice $M$) 
by a matrix in $GL(n,\ZZ)\cong GL(M)$. Since $\sigma$
preserves the polytope $P$, then $\det\sigma\in\{\pm 1\}$
($\sigma$
could be orientation-reversing, e.g., in the case of a reflection).
The induced action
of $\sigma$ on the dual space $N_\RR$ is naturally represented (via 
the pairing between $M$ and $N$) by the transpose matrix, that we
denote by  $\sigma^T$, and this action is actually coming
from the $\CC$-linear action of $\sigma^T$ on $N_\CC\cong\CC^n$
\eqref{CncoordwEq}.
Thus,
$$
\sigma.(dx_1\wedge\cdots\wedge dx_n)
=
d(x_1\circ\sigma)\wedge\cdots\wedge d(x_n\circ\sigma)
=
\det(\sigma^T)dx_1\wedge\cdots\wedge dx_n
$$
(here $\sigma.$ denotes the action of $\sigma$ on forms, i.e., by pull-back),
and
$\sigma.(d\theta_1\wedge\cdots\wedge d\theta_n)
=
\det(\sigma^T)
d\theta_1\wedge\cdots\wedge d\theta_n
$.
Since $(\det\sigma^T)^2=1$ it remains to consider the denominator
of \eqref{canonicalomeganEq}: 
\begin{align*}
\sigma.
\sum\limits_{u\in P\cap M/k}e^{k\langle u,x\rangle}
&=
\sum\limits_{u\in P\cap M/k}e^{k\langle u,\sigma^T.x\rangle}\cr
&=
\sum\limits_{u\in P\cap M/k}e^{k\langle \sigma.u,x\rangle}\cr
&=
\sum\limits_{u\in \sigma(P\cap M/k)}e^{k\langle u,x\rangle}\cr
&=
\sum\limits_{u\in P\cap M/k}e^{k\langle u,x\rangle},\cr
\end{align*}
since $\sigma$ preserves both $P$ and $M/k$.
In particular, $h_k$ is invariant under $\sigma$, concluding the proof.
\epf
\bremark
An alternative, more invariant, proof of Lemma \ref{hkGinvLem}
is as follows. By \eqref{hkiotaEq}, $\sigma.h_k=(\iota_k\circ\sigma)^*h_{\FS}$.
Now $\iota_k\circ\sigma$ induces the exact same Kodaira embedding 
if $\sigma\in (S^1)^n<G(H)$. So it suffices to consider $\sigma\in H$; then, since $H$ preserves 
$P\cap M/k$, one obtains the same Kodaira embedding up to permutation of the 
coordinates in $\PP^{E_P(k)-1}$. Either way, one obtains the same pull-back
of the Fubini--Study metric as can be from the definition of the Fubini--Study metric
or directly from \eqref{-kK_X metric-h_k}, i.e., $\sigma.h_k=h_k$.
\eremark

\section{An algebraic $\alpha_{k,G}$-invariant and a Demailly
 type result}
\lb{DemaillySec}

\subsection{Analytic definition}

Analogous to the classical $\alpha_G$-invariant, Tian \cite[p. 128]{Tian90} defined the $\alpha_{k,G}$-invariant. 
For this one restricts to a $G$-invariant 
subset of $\cH_k$. To write the subset explicitly in terms of global
\K potentials it is necessary to choose a continuous $G$-invariant 
Hermitian metric $h$ on $-kK_X$:
\begin{equation}
\label{H_k^G definition}
\cH_k^{G}(h):=
\left\{\varphi=\frac{1}{k}\log\sum_i|s_i|_{h}^2\,:\,\varphi\text{ is $G$-invariant, $\{s_i\}$ is a basis of $H^0(X,-kK_X)$}\right\}\subset C^\infty(X).
\end{equation}

\begin{definition}
\lb{alphakGhmuDef}
    Let $G\subseteq\Aut X$ be a compact subgroup. 
    Let $h$ be a fixed continuous $G$-invariant Hermitian metric 
on $-kK_X$, and $\mu$ a fixed continuous volume form on $X$. Then
$$
\alpha_{k,G}(h,\mu):=\sup
\left\{c>0\,:\,\sup_{\varphi\in \cH_k^G}\int_Xe^{-c(\varphi-\sup\varphi)}d\mu<\infty\right\}.
$$
\end{definition}

\blem
\lb{DefhmuLem}
Definition \ref{alphakGhmuDef} does not depend 
on the choice of $h$ or $\mu$.
\elem
For this reason we will simply denote the invariants by $\alpha_{k,G}$
from now on. 

\bpf
Since $X$ is compact, any two continuous volume forms are uniformly bounded
and hence define the same $L^1$ spaces. Next, given two continuous
$G$-invariant Hermitian metrics $h$ and $\tilde h$ on $-kK_X$ 
there is an isomorphism
from $\cH_k^G(h)$ to $\cH_k^G(\tilde h)$ given by 
$
\varphi\mapsto \varphi+\frac1k\log\frac{\tilde h}h.
$
Observe that $\frac1k\log\frac{\tilde h}h$ is (again by compactness of $X$)
a uniformly bounded function on $X$. Hence, for a fixed $c>0$, 
$$
\sup_{\varphi\in \cH_k^G(h)}\int_Xe^{-c(\varphi-\sup\varphi)}d\mu<\infty
\quad \Leftrightarrow 
\sup_{\varphi\in \cH_k^G(\tilde h)}\int_Xe^{-c(\varphi-\sup\varphi)}d\mu<\infty,
$$
as desired.
\epf

\subsection{Algebraic definition}

Consider a (complex) non-zero vector subspace $V$ of $H^0(X,kL)$.
Associated to it is the (not necessarily complete) 
linear system $|V|:=\PP V\subset |kL|:=\PP H^0(X,kL)$ 
\cite[p. 137]{GH}. 
    \blem
    Let $V$ be vector subspace of $H^0(X,kL)$ of dimension $p>0$.
For any basis $\nu_1,\ldots,\nu_p\in H^0(X,kL)$ of $V$, the number
$$
    \sup\left\{c>0\,:\,~ \left(\sum_{j=1}^p|\nu_j(z)|^2\right)^{-{c}} \text{ is locally integrable on }X \right\}
$$
is the same.    \elem
\bpf
Let $\{\nu^{(\ell)}_1,\ldots,\nu^{(i)}_p\}, \, \ell\in\{1,2\},$ be two bases for $V\in H^0(X,kL)$.
Let $A\in GL(p,\CC)$ be the change-of-basis matrix, i.e., 
$\nu^{(2)}_j=A^i_j\nu^{(1)}_i$. Observe that $A^HA$ is a positive
Hermitian matrix-valued on $X$, and denote its eigenvalues 
$0<\lambda_1\le\cdots\le \lambda_p$.
Denote
$\nu^{(\ell)}(z):=(\nu^{(\ell)}_1(z),\ldots,\nu^{(\ell)}_p(z))\in\CC^p$ and
$|\nu^{(\ell)}(z)|^2=\sum_{i=1}^p|\nu^{(\ell)}_i(z)|^2$.
Then,
$$
\frac{|\nu^{(2)}(z)|^2}{|\nu^{(1)}(z)|^2}=
\frac{|A\nu^{(1)}(z)|^2}{|\nu^{(1)}(z)|^2}\in  
[\lambda_1,\lambda_p],
$$
Thus, $|\nu^{(2)}|^2$ is locally integrable
if and only if $|\nu^{(1)}|^2$ is.
\epf
Thus, define the log canonical threshold of the linear system $|V|$ by
\beq
    \lb{lctSigmaEq}
        \lct|V|
        :=\sup\left\{c>0\,:\,~ \left(\sum_j|\nu_j(z)|^2\right)^{-{c}} \text{ is locally integrable on }X \right\}.
    \eeq
   
When $L=-K_X$, there is an $\Aut X$ action on $H^0(X,-kK_X)$
for every $k\in\NN$. Demailly \cite[Theorem A.3, (A.1)]{CS08} noted that then 
$\alpha_G$-invariants (for compact subgroup $G\subset\Aut X$) can be algebraically computed as
    \begin{align}
    \lb{DemaillyA1Eq}
        \alpha_G
        =\inf_{k\in\NN}k\inf_{\substack{|V|\subset|-kK_X|\\ V^G=V\not=0}}\lct
        |V|,
    \end{align}
    where 
    $$
    V^G:=\{v\in V\,:\, g.v\in V\q \forall g\in G\}.
    $$
    Note that in \eqref{DemaillyA1Eq}
    $V\not=0$ ranges over all $G$-invariant vector subspaces of $H^0(X,-kK_X)$ (i.e.,
    of any positive dimension).
    For example, $\lct|-kK_X|=\infty$. 
    From the definition, if $V_1\subset V_2$ are
    two such subspaces it suffices to compute $\lct|V_1|$ since 
    \beq
    \lb{lctinclusionEq}
    \lct|V_1|\le \lct|V_2|.
    \eeq

    It is thus natural to define the algebraic counterpart of the $\alpha_{k,G}$-invariant 
    as follows.
    \begin{definition}
    \lb{glctkDef}
        Let $X$ be a Fano manifold and $G\subset\Aut X$ be a compact subgroup of the automorphism group. Define
    \begin{align}
    \label{alg alpha_k,G}
        \glct_{k,G}
        :=k\inf_{\substack{|V|\subset|-kK_X|\\ V^G=V}}\lct
        |V|.
    \end{align}
    \end{definition}

\subsection{Characterization of the equivariant Bergman spaces}
\lb{BergmanSubSec}

First, we show that in the toric setting the space $\cH_k^{G(H)}$
consists of \K potentials induced by Kodaira embeddings of multiples of 
monomials sections, with the norming constants constant along orbits
of $H$.

Denote by 
\beq
\lb{OrbitsOiEq}
O_1^{(k)},\ldots,O_N^{(k)},
\eeq
the orbits of $H$ in $k^{-1}M\cap P$
(see Figure \ref{P2figure} for an example).

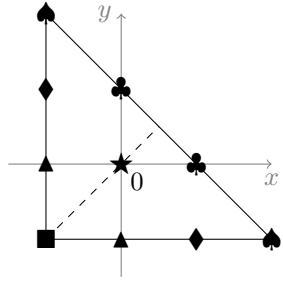
\begin{figure}
        \centering
        \begin{tikzpicture}
            \draw[->,gray](-1.5,0)--(2,0)node[below]{$x$};
            \draw[->,gray](0,-1.5)--(0,2)node[left]{$y$};
            \draw(0,0)node[below right]{$0$};
            \draw(2,-1)node{$\spadesuit$}--(1,0)node{$\clubsuit$}--(0,1)node{$\clubsuit$}--(-1,2)node{$\spadesuit$}--(-1,1)node{$\blacklozenge$}--(-1,0)node{$\blacktriangle$}--(-1,-1)node{$\blacksquare$}--(0,-1)node{$\blacktriangle$}--(1,-1)node{$\blacklozenge$}--cycle;
            \draw(0,0)node{$\bigstar$};
            \draw[dashed](-1,-1)--(1/2,1/2);
        \end{tikzpicture}
        \caption{The six orbits $O_1^{(1)},\ldots,O_6^{(1)}$ of the action of the group generated by the reflection about $y=x$ on the polytope corresponding to $\PP^2$ with $k=1$.}\lb{P2figure}
    \end{figure}

\blem
\lb{HkGHtoricLem}
Let $\{s_{k,u}\}_{u\in k^{-1}M\cap P}$ be
the monomial basis \eqref{skuEq} of $H^0(X,-kL)$.
Let $G(H)\subseteq\Aut X$ \eqref{GHEq} be a subgroup generated by $(S^1)^n$ and a subgroup $H$ of $\Aut(P)$. 
Then (recall (\ref{H_k^G definition}) and (\ref{OrbitsOiEq})),
$$
\cH_k^{G(H)}:=
\cH_k^{G(H)}(h_k)=
\left\{{k}^{-1}\log\sum_{i=1}^N\lambda_i\sum_{u\in O_i^{(k)}}|s_{k,u}|_{h_k}^2\,
:\, \lambda_i>0\right\}.
$$
\elem

\bpf
Recall the natural basis $\{s_{k,u}\}_{u\in P\cap M/k}$ defined by the
lattice monomials \eqref{skuEq}. Given any other basis $\{s_i\}$ for $H^0(X,-kK_X)$, let 
$A\in GL(E_P(k),\CC)$ be the change-of-basis matrix, so 
$$
s_i=A^u_is_{k,u}, \q i\in\{1,\ldots,E_P(k)\}
$$
(we use the Einstein summation convention).

Let $\vp\in\cH_k^{G(H)}$ and let $\{s_i\}$ be
the associated basis \eqref{H_k^G definition}.
Expanding $\vp$ in terms of the monomials,
\begin{align*}
e^{k\varphi}
&=\sum_i|s_i|_{h_k}^2=\sum_{u,u'\in P\cap M/k}
\sum_{i,j=1}^{E_P(k)}A^u_i\overline{A_j^{u'}}
\langle s_{k,u},s_{k,u'}\rangle_{h_k}
\cr
&=:
\left|s_{k,0}\right|_{h_k}^2\sum_{u,u'\in P\cap M/k}c_{u,u'}z^{ku}\bar{z}^{ku'}
\end{align*}
for some coefficients $\{c_{u,u'}\}_{u,u'\in P\cap M/k}$. 
We claim that $\{z^u\bar{z}^{u'}\}_{u,u'\in M}$ are linearly independent. 
To see that, suppose
$$
f(z)=\sum_{u,u'\in M}c_{u,u'}z^u\bar{z}^{u'}=0
$$
with all but finitely many coefficients being 0. We may assume for any $c_{u,u'}\neq0$, $u,u'$ lie in the positive orthant, by multiplying by $\displaystyle\prod_{i}\left|z_i\right|^2$ to some sufficiently large power. Then
$$
c_{u,u'}=\frac{1}{u!u'!}\left.\frac{\partial^{\left|u\right|+\left|u'\right|}}{\partial^uz\partial^{u'}\bar{z}}\right|_{z=0}f(z)=0,
$$
proving the claim. 
Now, the subgroup $(S^1)^n<G(H)$ acts on the open orbit
$(\CC^*)^n$ by 
\beq
\lb{torusactionEq}
(\beta_1,\ldots,\beta_n).(z_1,\ldots,z_n)=
(e^{\i\beta_1}z_1,\ldots,e^{\i\beta_n}z_n).
\eeq
So by $(S^1)^n$-invariance and Lemma \ref{hkGinvLem},
$$
\beta.e^{k\varphi}=
\left|s_{k,0}\right|_{h_k}^2\sum_{u,u'\in P\cap M/k}c_{u,u'}e^{\i\left\lan\beta,k\left(u-u'\right)\right\ran}z^{ku}\bar{z}^{ku'}
=
\left|s_{k,0}\right|_{h_k}^2\sum_{u,u'\in P\cap M/k}c_{u,u'}z^{ku}\bar{z}^{ku'}
=e^{k\varphi},
$$
for any $\beta\in(S^1)^n=(\RR/2\pi\ZZ)^n$.
By comparing the coefficients and using the claim we just demonstrated, it follows that for $e^{k\varphi}$ to be $(S^1)^n$-invariant we must have $c_{u,u'}=0$ whenever $u\neq u'$
(the converse is also true, of course).

Moreover, we also require $e^{k\varphi}$ to be invariant under the action of $H$, i.e., $c_{u,u}=c_{\sigma u,\sigma u}$ for any $\sigma\in H$. In conclusion, let $O^{(k)}_1,\ldots,O^{(k)}_N$ be the orbits of the action of $H$ on $P\cap M/k$. Then
$$
e^{k\varphi}=\sum_{i=1}^N\lambda_i\sum_{u\in O^{(k)}_i}|s_{k,u}|_{h_k}^2,
$$
for some coefficients $\{\lambda_i\}_{i=1}^N$. Thus 
\eqref{H_k^G definition} simplifies to
$$
\cH_k^{G(H)}(h)=\left\{\varphi=\frac{1}{k}\log\sum_{i=1}^N\lambda_i\sum_{u\in O^{(k)}_i}|s_{k,u}|_{h_k}^2\,:\,\lambda_i>0\right\},
$$
as claimed.
\epf

\subsection{Replacing a basis of sections by an orbit of a section}
\lb{DemaillyProbSubSec}

The next result generalizes \cite[Proposition 2.1]{Shi10}
to the equivariant setting using Lemma \ref{HkGHtoricLem}.

\begin{proposition}
\label{analytic alpha_k,G}
For a subgroup $G(H)\subseteq\Aut X$ \eqref{GHEq} 
generated by $(S^1)^n$ and a subgroup $H$ of $\Aut(P)$,
\begin{equation}\label{alpha_{k,G}}
    \alpha_{k,G(H)}=\sup\left\{c>0\,:\,\int_X\Big(\sum_{\sigma\in H}\left|s_{k,\sigma u}\right|_{h_k}^2\Big)^{-\frac{c}{k}}d\mu<\infty,~\forall~u\in P\cap M/k\right\}.
\end{equation}
\end{proposition}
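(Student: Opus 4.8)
The plan is to combine the explicit description of $\cH_k^{G(H)}$ from Lemma \ref{HkGHtoricLem} with a two-sided estimate that replaces a full basis of sections by the single orbit $\{s_{k,\sigma u}\}_{\sigma\in H}$ of one monomial. By Lemma \ref{HkGHtoricLem}, every $\varphi\in\cH_k^{G(H)}$ has the form $\varphi=\frac1k\log\sum_{i=1}^N\lambda_i\sum_{u\in O_i^{(k)}}|s_{k,u}|_{h_k}^2$ with all $\lambda_i>0$, and $\sup_X\varphi$ is comparable (up to a constant depending on $\varphi$ but with the dependence controlled) to $\frac1k\log\max_i\lambda_i$. So $e^{-c(\varphi-\sup\varphi)}$ is, up to a multiplicative constant, $\big(\sum_i\lambda_i\sum_{u\in O_i^{(k)}}|s_{k,u}|_{h_k}^2\big/\max_j\lambda_j\big)^{-c/k}$. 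The key point is that the integrability of this quantity, uniformly over all choices of $\lambda$, is governed by the \emph{worst} orbit: bounding below by a single term $\lambda_i\sum_{u\in O_i^{(k)}}|s_{k,u}|_{h_k}^2$ shows finiteness of the sup forces $\big(\sum_{u\in O_i^{(k)}}|s_{k,u}|_{h_k}^2\big)^{-c/k}\in L^1$ for every $i$; conversely if each such orbit-sum is $L^1$-integrable to the power $-c/k$, then since there are finitely many orbits and $\sum_i\lambda_i\sum_{u\in O_i^{(k)}}|s_{k,u}|^2_{h_k}\ge \max_i\lambda_i\cdot\sum_{u\in O_i^{(k)}}|s_{k,u}|^2_{h_k}$ for the maximizing index, one gets a uniform bound. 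And the orbit $O_i^{(k)}$ of a point $u$ is exactly $\{\sigma u:\sigma\in H\}$ (possibly with repetitions, which only changes things by a bounded factor $\le|H|$), so $\sum_{u'\in O_i^{(k)}}|s_{k,u'}|_{h_k}^2$ is comparable to $\sum_{\sigma\in H}|s_{k,\sigma u}|_{h_k}^2$.

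The steps, in order: (1) Using Lemma \ref{HkGHtoricLem}, rewrite $\alpha_{k,G(H)}$ as the supremum of $c>0$ such that $\sup$ over all $\lambda\in\RR_{>0}^N$ of $\int_X\big(\sum_{i=1}^N\lambda_i\sum_{u\in O_i^{(k)}}|s_{k,u}|_{h_k}^2\big/\max_j\lambda_j\big)^{-c/k}d\mu<\infty$; here one should first note via Lemma \ref{DefhmuLem} that the choice of $\mu$ and of the $G$-invariant metric is immaterial, and use $h_k$. (2) For the ``$\ge$'' direction (RHS $\le$ LHS): given that each orbit-sum to the power $-c/k$ is integrable, bound $\sum_i\lambda_i\sum_{u\in O_i^{(k)}}|s_{k,u}|^2_{h_k}\big/\max_j\lambda_j\ge \sum_{u\in O_{i_0}^{(k)}}|s_{k,u}|^2_{h_k}$ where $i_0$ achieves the max; but since $i_0$ depends on $\lambda$, take instead the crude bound $\ge \frac1N\sum_{i=1}^N\sum_{u\in O_i^{(k)}}|s_{k,u}|^2_{h_k}$ when $\max_j\lambda_j\le\sum_i\lambda_i\le N\max_j\lambda_j$... actually cleaner: use $\sum_i\lambda_i(\cdots)_i\ge \max_j\lambda_j\cdot(\cdots)_{i_0}$, so the integrand is $\le\big((\cdots)_{i_0}\big)^{-c/k}\le\sum_{i=1}^N\big((\cdots)_i\big)^{-c/k}$, a finite sum of integrable functions, giving a uniform bound independent of $\lambda$. (3) For the ``$\le$'' direction (LHS $\le$ RHS): fix $u\in P\cap M/k$ in orbit $O_{i}^{(k)}$, and take $\lambda_i=1$, $\lambda_j=\epsilon$ for $j\ne i$; let $\epsilon\to0$, and by monotone convergence the integrals converge to $\int_X\big(\sum_{u'\in O_i^{(k)}}|s_{k,u'}|^2_{h_k}\big)^{-c/k}d\mu$; if the sup over $\cH_k^{G(H)}$ is finite this limit is finite, and since $\sum_{u'\in O_i^{(k)}}|s_{k,u'}|^2_{h_k}$ and $\sum_{\sigma\in H}|s_{k,\sigma u}|^2_{h_k}$ differ by a factor in $[1,|H|]$, we conclude $\big(\sum_{\sigma\in H}|s_{k,\sigma u}|^2_{h_k}\big)^{-c/k}\in L^1(X,\mu)$. (4) Combine (2) and (3) to identify the two suprema of admissible $c$.

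The main obstacle I anticipate is the careful bookkeeping of $\sup_X\varphi$: one needs that $e^{k\sup_X\varphi}$ is comparable, with constants independent of $\lambda$ (depending only on $k$, $X$, $h_k$, $H$), to $\max_j\lambda_j$. This follows because $\sum_{u\in O_i^{(k)}}|s_{k,u}|^2_{h_k}$ is a fixed finite collection of functions on the compact $X$, each bounded above (being pullbacks of Fubini–Study-bounded sections) and the total sum $\sum_{u\in P\cap M/k}|s_{k,u}|^2_{h_k}\equiv 1$ by construction of $h_k$ (cf. \eqref{-kK_X metric}), so $0<\sum_{u\in O_i^{(k)}}|s_{k,u}|^2_{h_k}\le 1$ pointwise and each attains values bounded away from $0$ somewhere; hence $\max_j\lambda_j\cdot c_1\le e^{k\sup_X\varphi}\le \max_j\lambda_j$ for a uniform $c_1>0$. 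Once this normalization is pinned down, the monotone-convergence argument in step (3) and the finite-sum domination in step (2) are routine, and the only remaining subtlety — that an orbit of size $<|H|$ (when $u$ has nontrivial stabilizer) introduces at most the factor $|H|$ between $\sum_{u'\in O_i^{(k)}}|\cdot|^2$ and $\sum_{\sigma\in H}|\cdot|^2$ — does not affect the value of the threshold $c$ since raising to the power $-c/k$ a bounded factor stays bounded.
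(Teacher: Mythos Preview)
Your proposal is correct and follows essentially the same two-step argument as the paper: bound a basis-type potential from below by its dominant orbit term (your step (2), the paper's Step 1), and recover the orbit integral as a degeneration $\lambda_j\to0$, $j\neq i$ (your step (3) via monotone convergence, the paper's Step 2 via Fatou). The only cosmetic differences are that you normalize $\sup_X\varphi$ by $\max_j\lambda_j$ up front (using the two-sided comparison $c_1\max_j\lambda_j\le e^{k\sup\varphi}\le\max_j\lambda_j$, whereas the paper uses only the upper bound in Step 1 and handles the lower bound inside the Fatou computation) and that you dominate by the sum $\sum_i\big(\sum_{u\in O_i^{(k)}}|s_{k,u}|_{h_k}^2\big)^{-c/k}$ rather than the single worst orbit; neither affects the outcome.
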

\begin{proof}

\noindent {\it Step 1: estimate a basis-type element using the worst orbit present in its expansion.}
Assuming $\lambda_N=\max_{i=1,\ldots,N}\lambda_i$, we have$$
\sup\varphi\leq\sup\left(\frac{1}{k}\log\sum_{i=1}^N\lambda_N\sum_{u\in O^{(k)}_i}|s_{k,u}|_{h_k}^2\right)=\frac{1}{k}\log\lambda_N,
$$
since 
$
1/h_k=\sum_{u\in P\cap M/k}|s_{k,u}|^2.
$
If $c>0$ is such that for each $i\in\{1,\ldots,N\}$,
\begin{align*}
    \int_X\left(\sum_{u\in O^{(k)}_i}\left|s_{k,u}\right|_{h_k}^2\right)^{-\frac{c}{k}}d\mu\leq C_c,
\end{align*}
then for $\varphi\in\cH_k^{G(H)}$ (using Lemma \ref{HkGHtoricLem}),
\begin{align*}
\int_X e^{-c(\varphi-\sup\varphi)}d\mu
&=
    e^{c\sup\varphi}\int_X\left( \sum_{i=1}^N\lambda_i\sum_{u\in O^{(k)}_i}|s_{k,u}|_{h_k}^2\right)^{-\frac{c}{k}}d\mu\\
    &\leq e^{\frac{c}{k}\log\lambda_N}
    \int_X\left(\lambda_N\sum_{u\in O^{(k)}_N}\left|s_{k,u}\right|_{h_k}^2\right)^{-\frac{c}{k}}d\mu
    \leq C_c.
\end{align*}

\noindent {\it Step 2: estimate an orbit-type element using an approximation
by degenerating basis-type elements.
}

Conversely, assume $\alpha>0$ is such that
for any $\varphi\in \cH_k^{G(H)}$,
\begin{align*}
    \int_X e^{-c(\varphi-\sup\varphi)}d\mu\leq C_c.
\end{align*}
Since, for any $\ell=1,\ldots,N$, 
\begin{align*}
    \left(\sum_{u\in O^{(k)}_\ell}\left|s_{k,u}\right|_{h_k}^2\right)^{-\frac{c}{k}}
    =\lim_{\lambda_i\rightarrow0,i\neq\ell}\left( \sum_{i=1}^N\lambda_i\sum_{u\in O^{(k)}_i}|s_{k,u}|_{h_k}^2\right)^{-\frac{c}{k}},
\end{align*}
where $\lambda_\ell=1$, by Fatou's Lemma \cite[Lemma 2.18]{Folland},
\begin{align*}
    \int_X\left(\sum_{u\in O^{(k)}_\ell}\left|s_{k,u}\right|_{h^k}^2\right)^{-\frac{c}{k}}d\mu
    &\leq \liminf_{\lambda_i\rightarrow0,i\neq\ell} 
    e^{-\alpha\sup\varphi_\lambda}e^{\alpha\sup\varphi_\lambda} 
    \int_X \left( \sum_{i=1}^N\lambda_i\sum_{u\in O^{(k)}_i}|s_{k,u}|_{h_k}^2\right)^{-\frac{\alpha}{k}}d\mu  \\
    &=\left(\sup\sum_{u\in O^{(k)}_\ell}|s_{k,u}|_{h_k}^2\right)^{-\frac{c}{k}}
    \liminf_{\lambda_i\rightarrow0,i\neq\ell} \int_X e^{-c(\varphi_\lambda-\sup\varphi_\lambda)}d\mu\\
    &\leq\left(\sup\sum_{u\in O^{(k)}_\ell}|s_{k,u}|_{h_k}^2\right)^{-\frac{c}{k}}C_c,
\end{align*}
where
\begin{align*}
    \varphi_\lambda
    =\frac{1}{k}\log\sum_{i=1}^N\lambda_i\sum_{u\in O^{(k)}_i}|s_{k,u}|_{h_k}^2.
\end{align*}
Thus, we have shown that 
\begin{align}
    \alpha_{k,G(H)}&=\sup\left\{c>0\,:\,\int_X\left(\sum_{u\in O^{(k)}_i}\left|s_{k,u}\right|_{h_k}^2\right)^{-\frac{c}{k}}d\mu<\infty,~\forall~i\right\}\label{alpha orbit exp}\\
    &=\sup\left\{c>0\,:\,\int_X\left(\sum_{\sigma\in H}\left|s_{k,\sigma u}\right|_{h_k}^2\right)^{-\frac{c}{k}}d\mu<\infty,~\forall~u\in P\cap M/k\right\},\nonumber
\end{align}
proving \eqref{alpha_{k,G}}.
\end{proof}

\begin{corollary}\label{alpha orbit exp cor}
    Fix $k\in\NN$ and let $O^{(k)}_1,\ldots,O^{(k)}_N$ be the orbits of the action of $H$ on $k^{-1}M\cap P$. Then
    $$
        \alpha_{k,G(H)}=\min_{1\leq i\leq N}\sup\left\{c>0\,:\,\int_X\left(\sum_{u\in O^{(k)}_i}\left|s_{k,u}\right|_{h_k}^2\right)^{-\frac{c}{k}}d\mu<\infty\right\}.
    $$
\end{corollary}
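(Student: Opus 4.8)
The plan is to derive Corollary \ref{alpha orbit exp cor} directly from the identity \eqref{alpha orbit exp} established in the proof of Proposition \ref{analytic alpha_k,G}. Recall that the proposition's proof already yields
$$
\alpha_{k,G(H)}=\sup\left\{c>0\,:\,\int_X\Big(\sum_{u\in O^{(k)}_i}\left|s_{k,u}\right|_{h_k}^2\Big)^{-\frac{c}{k}}d\mu<\infty,~\forall~i\in\{1,\ldots,N\}\right\},
$$
so the only content of the corollary is that this "simultaneous" supremum equals the minimum over $i$ of the individual supprema. This is a general fact about finite families of monotone integrability conditions, so the proof is short.

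First I would, for each $i\in\{1,\ldots,N\}$, set
$$
\beta_i:=\sup\left\{c>0\,:\,\int_X\Big(\sum_{u\in O^{(k)}_i}\left|s_{k,u}\right|_{h_k}^2\Big)^{-\frac{c}{k}}d\mu<\infty\right\},
$$
so the claim is $\alpha_{k,G(H)}=\min_{1\leq i\leq N}\beta_i$. For the inequality $\alpha_{k,G(H)}\le\min_i\beta_i$: if $c>0$ satisfies all $N$ integrability conditions simultaneously, then in particular it satisfies the $i$-th one for each fixed $i$, so $c\le\beta_i$ for every $i$, hence $c\le\min_i\beta_i$; taking the supremum over such $c$ gives $\alpha_{k,G(H)}\le\min_i\beta_i$. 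For the reverse inequality, fix any $c$ with $0<c<\min_i\beta_i$. Then $c<\beta_i$ for each $i$, so by definition of $\beta_i$ as a supremum, for each $i$ there exists $c_i$ with $c<c_i$ such that $\big(\sum_{u\in O^{(k)}_i}|s_{k,u}|_{h_k}^2\big)^{-c_i/k}$ is locally integrable on $X$. Since each $\sum_{u\in O^{(k)}_i}|s_{k,u}|_{h_k}^2$ is a (nonnegative) smooth function on the compact $X$, it is bounded above, so integrability of a negative power with exponent $c_i/k$ implies integrability of the negative power with the smaller exponent $c/k$ (pointwise, $t^{-c/k}\le C\,t^{-c_i/k}$ near $0$ and $t^{-c/k}$ is bounded where $t$ is bounded away from $0$). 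Hence $c$ satisfies all $N$ integrability conditions simultaneously, so $c\le\alpha_{k,G(H)}$; letting $c\uparrow\min_i\beta_i$ gives $\min_i\beta_i\le\alpha_{k,G(H)}$.

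There is really no main obstacle here — this is a bookkeeping argument once \eqref{alpha orbit exp} is in hand. The only point deserving a word of care is the monotonicity step: that integrability of $f^{-c'/k}$ for some $c'>c$ implies integrability of $f^{-c/k}$, which uses compactness of $X$ and smoothness (hence boundedness) of the section-norm-squared $f=\sum_{u\in O^{(k)}_i}|s_{k,u}|_{h_k}^2$, exactly as in the comparison lemma \eqref{lctSigmaEq} and the change-of-basis lemma preceding it. With that observation the finite min and the simultaneous sup coincide, completing the proof.
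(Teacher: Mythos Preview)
Your proposal is correct and follows the same approach as the paper: both start from \eqref{alpha orbit exp} and pass from the simultaneous supremum to the minimum of the individual suprema. The paper simply asserts this equality in one line, whereas you spell out the monotonicity-in-$c$ argument (using boundedness of $\sum_{u\in O^{(k)}_i}|s_{k,u}|_{h_k}^2$ on the compact $X$) that justifies it; your extra care is warranted but does not constitute a different route.
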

\begin{proof}
    By \eqref{alpha orbit exp},
    \begin{align*}
        \alpha_{k,G(H)}&=\sup\left\{c>0\,:\,\int_X\left(\sum_{u\in O^{(k)}_i}\left|s_{k,u}\right|_{h_k}^2\right)^{-\frac{c}{k}}d\mu<\infty,~\forall~i\right\}\\
        &=\min_{1\leq i\leq N}\sup\left\{c>0\,:\,\int_X\left(\sum_{u\in O^{(k)}_i}\left|s_{k,u}\right|_{h_k}^2\right)^{-\frac{c}{k}}d\mu<\infty\right\}.
    \end{align*}
\end{proof}

We can now answer affirmatively Problem \ref{DemaillyProb} in our setting.

\bpf[Proof of Proposition \ref{DemaillyProblemProp}]
    Let $\Big|V_{O^{(k)}_i}\Big|$ denote the linear system generated by 
    $\{(s_{k,u}) 
    \,:\, ~u\in O^{(k)}_i\}$ (recall \eqref{OrbitsOiEq}).
    By \eqref{lctSigmaEq} and \eqref{alpha_{k,G}},
    $$
    \alpha_{k,G(H)}=k\inf_{i}\lct\Big|V_{O^{(k)}_i}\Big|.
    $$
     It remains to show
    $$
        \glct_{k,G(H)}
        =k\inf_{i}\lct\Big|V_{O^{(k)}_i}\Big|.
    $$
     By \eqref{lctinclusionEq}, it suffices
    to restrict to irreducible $G(H)$-invariant linear systems
    (cf. \cite[p. 146]{LSY15}). Now, any $(S^1)^n$-invariant linear system $|V|$ is spanned by monomials (see Lemma \ref{onedimpiecesLema} below). By irreducibility, this means $|V|$ is spanned by the monomials coming from some $H$-orbit $O_i$. i.e., $|V|=\Big|V_{O^{(k)}_i}\Big|$.
\end{proof}

\blem
\lb{onedimpiecesLema}
Let
$V\subseteq H^0(X,-kK_X)$ be a complex vector subspace invariant under $(S^1)^n$. Then
there exists a unique subset $\cF\subset P\cap M/k$ such that
    $$
        V=\Span\left\{s_{k,u}\right\}_{u\in\cF}.
    $$
\elem

\bpf
It suffices to prove that any section in $V$ is generated by monomials in $V$. That is, given any section
$$
s=\sum_{u\in P\cap M/k}a_us_{k,u}\in V,
$$
if $a_u\neq0$, then $s_{k,u}\in V$.

By $(S^1)^n$-invariance, for any $\beta\in(S^1)^n$ and $s\in V$
also $\beta.s\in V$ (recall \eqref{torusactionEq}), i.e., 
$$
\beta.s=
\sum_{u\in P\cap M/k}a_u\beta.s_{k,u}
=
\sum_{u\in P\cap M/k}a_ue^{\i\left\langle\beta,ku 
\right\rangle}s_{k,u}\in V.
$$
Thus for any $u_0\in P\cap M/k$,
\begin{align*}
    \int_{\left(S^1\right)^n}e^{-\i\left\langle\beta,ku_0\right\rangle}\beta.s\,d\beta&=\sum_{u\in P\cap M/k}a_us_{k,u}\int_{\left(S^1\right)^n}e^{\i\left\langle\beta,ku-ku_0\right\rangle}d\beta\\
    &=\sum_{u\in P\cap M/k}a_us_{k,u}\cdot\left(2\pi\right)^n\delta_{u-u_0}\\
    &=\left(2\pi\right)^na_{u_0}s_{k,u_0}\in V.
\end{align*}
If $a_{u_0}\neq0$, then $s_{k,u_0}\in V$. This completes the proof.
\epf

\begin{corollary}\label{alpha_G corollary}
    \begin{equation}\label{alpha_G}
        \alpha_{G(H)}=\inf_k\alpha_{k,G(H)}=\sup\left\{c>0\,:\,\int_X\left(\sum_{\sigma\in H}\left|s_{k,\sigma u}\right|_{h_k}^2\right)^{-\frac{c}{k}}d\mu<\infty,~\forall~u\in P\cap M/k,~k\in\NN\right\}.
    \end{equation}
\end{corollary}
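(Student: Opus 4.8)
The plan is to obtain Corollary \ref{alpha_G corollary} by assembling three ingredients that are already available: Demailly's algebraic identity \eqref{DemaillyA1Eq}, the resolution of Problem \ref{DemaillyProb} in Proposition \ref{DemaillyProblemProp}, and the orbit-wise expression for $\alpha_{k,G(H)}$ furnished by Proposition \ref{analytic alpha_k,G}. In outline: Demailly's identity together with $\glct_{k,G(H)}=\alpha_{k,G(H)}$ yields the first equality $\alpha_{G(H)}=\inf_k\alpha_{k,G(H)}$; Proposition \ref{analytic alpha_k,G} rewrites each $\alpha_{k,G(H)}$ as an infimum of single-integral thresholds over orbits; and a short elementary observation lets one interchange the outer supremum over $c$ with the resulting double infimum over $(k,u)$, producing the final displayed expression.

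For the first equality I would argue as follows. By Definition \ref{glctkDef} and Demailly's identity \eqref{DemaillyA1Eq} (valid for any compact $G\subset\Aut X$ when $L=-K_X$), $\alpha_{G(H)}=\inf_{k\in\NN}\glct_{k,G(H)}$, since enlarging the admissible $V$ to include $H^0(X,-kK_X)$ itself does not change the infimum ($\lct|-kK_X|=\infty$). Proposition \ref{DemaillyProblemProp} gives $\glct_{k,G(H)}=\alpha_{k,G(H)}$ for every $k$, hence $\alpha_{G(H)}=\inf_k\alpha_{k,G(H)}$. (One gets the inequality $\alpha_{G(H)}\le\alpha_{k,G(H)}$ directly from the inclusion $\cH_k^{G(H)}\subset\cH_L$ of $G(H)$-invariant potentials, but the reverse inequality still requires Demailly's approximation, so routing through \eqref{DemaillyA1Eq} is the efficient choice.)

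For the remaining equality I would unwind Proposition \ref{analytic alpha_k,G}. Equation \eqref{alpha orbit exp} already states $\alpha_{k,G(H)}=\sup\{c>0:\int_X(\sum_{\sigma\in H}|s_{k,\sigma u}|_{h_k}^2)^{-c/k}d\mu<\infty\ \forall\,u\in P\cap M/k\}$. Writing $\tau_{k,u}$ for the supremum of the set of exponents $c>0$ making the single integral $\int_X(\sum_{\sigma\in H}|s_{k,\sigma u}|_{h_k}^2)^{-c/k}d\mu$ finite, we have $\alpha_{k,G(H)}=\inf_{u\in P\cap M/k}\tau_{k,u}$, hence $\inf_k\alpha_{k,G(H)}=\inf_{k,u}\tau_{k,u}$. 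It remains to check that the supremum in the statement — over those $c$ for which \emph{all} the integrals $\int_X(\sum_{\sigma\in H}|s_{k,\sigma u}|_{h_k}^2)^{-c/k}d\mu$, $k\in\NN$, $u\in P\cap M/k$, are simultaneously finite — equals $\inf_{k,u}\tau_{k,u}$. The key point is that for each fixed $(k,u)$ the set of admissible exponents is an interval with left endpoint $0$: on the compact $X$, if $\int_X f^{-c/k}d\mu<\infty$ and $0<c'<c$, then $f^{-c'/k}\le 1+f^{-c/k}$ pointwise a.e., so $c'$ is admissible too. Consequently a given $c$ satisfies every constraint iff $c<\tau_{k,u}$ for all $(k,u)$, and taking the supremum over such $c$ yields exactly $\inf_{k,u}\tau_{k,u}$ (the single boundary value $c=\inf_{k,u}\tau_{k,u}$ being inconsequential, and handled by monotonicity of the integrals in $c$). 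Combining with the previous paragraph gives all three expressions.

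I do not anticipate a genuine obstacle: once Propositions \ref{DemaillyProblemProp} and \ref{analytic alpha_k,G} and Demailly's identity are in place, the corollary is essentially bookkeeping. The only spot warranting a careful sentence rather than a citation is the last step — the legitimacy of swapping $\sup_c$ with the double infimum $\inf_{k,u}$ — which is precisely the ``interval of admissible exponents'' observation recorded above.
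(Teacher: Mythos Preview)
Your proposal is correct and follows essentially the same route as the paper: Demailly's identity \eqref{DemaillyA1Eq} plus Proposition \ref{DemaillyProblemProp} for the first equality, then Proposition \ref{analytic alpha_k,G} for the second. The paper simply writes the passage from $\inf_k\sup\{c:\ldots\}$ to $\sup\{c:\ldots,\forall k\}$ without comment; your ``interval of admissible exponents'' paragraph makes explicit the (easy but genuine) monotonicity step the paper leaves to the reader.
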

\begin{proof}
    By Demailly's theorem \eqref{DemaillyA1Eq} \cite[(A.1)]{CS08}, \eqref{alg alpha_k,G}, and Proposition \ref{DemaillyProblemProp},
    $$
    \alpha_{G(H)}=\inf_{k\in\NN}\glct_{k,G(H)}=\inf_{k\in\NN}\alpha_{k,G(H)}.
    $$
    By \eqref{alpha_{k,G}},
    \begin{align*}
        \alpha_{G(H)}&=\inf_{k\in\NN}\sup\left\{c>0\,:\,\int_X\left(\sum_{\sigma\in H}\left|s_{k,\sigma u}\right|_{h_k}^2\right)^{-\frac{c}{k}}d\mu<\infty,~\forall~u\in P\cap M/k\right\}\\
        &=\sup\left\{c>0\,:\,\int_X\left(\sum_{\sigma\in H}\left|s_{k,\sigma u}\right|_{h_k}^2\right)^{-\frac{c}{k}}d\mu<\infty,~\forall~u\in P\cap M/k,~k\in\NN\right\}.
    \end{align*}
\end{proof}

\bremark
Note that here it would not have been enough to invoke Tian's theorem
\cite[Proposition 6.1]{Tian90}, and it was necessary to invoke
Demailly's theorem \cite[(A.1)]{CS08}. The reason is that the $\alpha_{k,G}$-invariants
are defined by requiring certain integrals to be merely finite, i.e., bounded, but with a constant possibly depending on $k$. Tian's theorem says that $\cH^G$ is approximated
by $\cH_k^G$, but in approximating $\vp\in\cH^G$ by a sequence $\vp_k\in\cH_k^G$
it might happen that the integrals $\int_Xe^{-c(\vp_k-\sup\vp_k)}\omega^n$
blow up as $k$ tends to infinity. Demailly precludes that from happening
by using the Demailly--Koll\'ar lower semi-continuity of complex singularity exponents.
\eremark

\section{The case of divisible $k$}
\lb{DivisibleSec}

We ultimately improve on the results of this section, but we include
them since they serve to emphasize the difficulties that still need
to be dealt with (see Remark \ref{DivisibleRemark}).  

\begin{proposition}
\lb{weakmonotoneProp}
    Let $G(H)\subseteq\Aut X$ \eqref{GHEq} be a subgroup generated by $(S^1)^n$ and a subgroup $H$ of $\Aut(P)$. For $k,\ell\in\NN$, $\alpha_{k,G(H)}\geq\alpha_{k\ell,G(H)}$.
\end{proposition}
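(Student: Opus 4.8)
The plan is to reduce the computation at both indices $k$ and $k\ell$ to the orbitwise integrability criterion of Proposition~\ref{analytic alpha_k,G} (equivalently Corollary~\ref{alpha orbit exp cor}), and then to exploit the fact that a level-$k\ell$ monomial section coming from level $k$ is literally the $\ell$-th tensor power of the corresponding level-$k$ monomial section. Fix once and for all a continuous volume form $\mu$. For $\alpha_{k,G(H)}$ I would use the canonical metric $h_k$, so that Proposition~\ref{analytic alpha_k,G} reads
\begin{equation*}
\alpha_{k,G(H)}=\sup\Big\{c>0\,:\,\int_X\Big(\sum_{\sigma\in H}|s_{k,\sigma u}|_{h_k}^2\Big)^{-\frac{c}{k}}d\mu<\infty,\ \forall\, u\in P\cap\tfrac1k M\Big\}.
\end{equation*}
For $\alpha_{k\ell,G(H)}$ I would use the metric $h_k^{\otimes\ell}$ on $-k\ell K_X$ in place of the canonical $h_{k\ell}$: both are smooth (hence continuous) Hermitian metrics on the same line bundle, so their ratio is a smooth positive function on the compact manifold $X$ and is therefore bounded above and below by positive constants; consequently, exactly as in the proof of Lemma~\ref{DefhmuLem}, replacing $h_{k\ell}$ by $h_k^{\otimes\ell}$ does not affect the finiteness of any of the integrals in Proposition~\ref{analytic alpha_k,G}. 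Thus
\begin{equation*}
\alpha_{k\ell,G(H)}=\sup\Big\{c>0\,:\,\int_X\Big(\sum_{\sigma\in H}|s_{k\ell,\sigma v}|_{h_k^{\otimes\ell}}^2\Big)^{-\frac{c}{k\ell}}d\mu<\infty,\ \forall\, v\in P\cap\tfrac1{k\ell}M\Big\}.
\end{equation*}

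Next I would record two elementary facts. First, from \eqref{skuEq}--\eqref{whatiseEq} the $\ell$-th tensor power of the level-$k$ monomial section $s_{k,u}$ equals the level-$k\ell$ monomial section $s_{k\ell,u}$, for every $u\in P\cap\frac1k M$ (the frames satisfy $e^{\otimes\ell}=e_{k\ell}$ since both equal $(\prod_i z_i)^{k\ell}(\partial_{z_1}\wedge\cdots\wedge\partial_{z_n})^{\otimes k\ell}$); since $H\subset\Aut P\subseteq GL(M)$ preserves both $P$ and the refined lattice $\frac1k M$, this yields $|s_{k\ell,\sigma u}|_{h_k^{\otimes\ell}}^2=|s_{k,\sigma u}|_{h_k}^{2\ell}$ for all $\sigma\in H$ and all such $u$. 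Second, for nonnegative reals $(a_\sigma)_{\sigma\in H}$ and $\ell\ge1$ there is the two-sided bound $|H|^{-(\ell-1)}\big(\sum_\sigma a_\sigma\big)^\ell\le\sum_\sigma a_\sigma^\ell\le\big(\sum_\sigma a_\sigma\big)^\ell$ (the upper bound because $a_\sigma\le\sum_\tau a_\tau$, the lower one by convexity of $t\mapsto t^\ell$). Taking $a_\sigma=|s_{k,\sigma u}|_{h_k}^2$ and raising to the power $-\frac{c}{k\ell}$ gives, for every $u\in P\cap\frac1k M$ and every $c>0$,
\begin{equation*}
\int_X\Big(\sum_{\sigma\in H}|s_{k\ell,\sigma u}|_{h_k^{\otimes\ell}}^2\Big)^{-\frac{c}{k\ell}}d\mu<\infty
\quad\Longleftrightarrow\quad
\int_X\Big(\sum_{\sigma\in H}|s_{k,\sigma u}|_{h_k}^2\Big)^{-\frac{c}{k}}d\mu<\infty .
\end{equation*}

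Finally, since $\frac1k M\subseteq\frac1{k\ell}M$ (an element $m/k$ equals $(\ell m)/(k\ell)$ with $\ell m\in M$), one has $P\cap\frac1k M\subseteq P\cap\frac1{k\ell}M$. Hence if $c>0$ lies in the admissible set defining $\alpha_{k\ell,G(H)}$, i.e. the level-$k\ell$ integrals above are finite for all $v\in P\cap\frac1{k\ell}M$, then in particular they are finite for every $v=u\in P\cap\frac1k M$, and by the displayed equivalence the level-$k$ integrals are finite for all $u\in P\cap\frac1k M$; thus $c$ lies in the admissible set defining $\alpha_{k,G(H)}$. Taking suprema gives $\alpha_{k\ell,G(H)}\le\alpha_{k,G(H)}$, which is the assertion.

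I do not expect a genuine obstacle: the argument is essentially bookkeeping around Proposition~\ref{analytic alpha_k,G}. The one step needing care is the metric swap (replacing $h_{k\ell}$ by $h_k^{\otimes\ell}$), which must be justified via compactness of $X$ before the identity $s_{k\ell,u}=s_{k,u}^{\otimes\ell}$ can be used. It is also worth emphasizing why the conclusion is only one-sided, and only along divisibility: the level-$k\ell$ criterion ranges over the strictly larger point set $P\cap\frac1{k\ell}M$, whose ``extra'' points impose additional integrability constraints, which is exactly what prevents this argument from producing monotonicity in all of $\NN$ — the difficulty flagged in Remark~\ref{DivisibleRemark}.
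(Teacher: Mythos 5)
Your proof is correct and follows essentially the same route as the paper's: reduce to the orbitwise integrability criterion of Proposition~\ref{analytic alpha_k,G}, swap $h_{k\ell}$ for the equivalent metric $h_k^{\otimes\ell}$ to identify $|s_{k\ell,\sigma u}|^2_{h_k^{\otimes\ell}}=|s_{k,\sigma u}|^{2\ell}_{h_k}$ for $u\in P\cap\frac1k M$, absorb the $\ell$-th powers with the two-sided elementary bound (which is exactly Lemma~\ref{xlLemma} in the paper), and then use the inclusion $P\cap\frac1kM\subseteq P\cap\frac1{k\ell}M$ to compare constraint sets. (A minor note: the paper's proof writes ``$M/k\ell\subset M/k$'', which is a typo for the inclusion you correctly state.)
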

\begin{proof}
    Since $h_{k\ell}$ and $h_k^\ell$ \eqref{-kK_X metric} are both smooth metrics on $-k\ell K_X$, by compactness of $X$, they are equivalent. We also use the following lemma.
    \begin{lemma}
\lb{xlLemma}
        Let $a_1,\ldots,a_N\geq0$. Then for $\ell\in\NN$,
        $$
            \sum_{i=1}^Na_i^\ell\leq\left(\sum_{i=1}^Na_i\right)^\ell\leq N^{\ell-1}\sum_{i=1}^Na_i^\ell.
        $$
    \end{lemma}
    \begin{proof}
        The first inequality follows by expanding $(\sum_{i=1}^Na_i)^\ell$. For the second inequality, consider the function $f(x):=x^\ell$ on $[0,+\infty)$. Since $f$ is convex, by Jensen's inequality,
        $$
            f\left(\frac{1}{N}\sum_{i=1}^Na_i\right)\leq\frac{1}{N}\sum_{i=1}^Nf\left(a_i\right),
        $$
        i.e.,
        $
        \disp    \left(\sum_{i=1}^Na_i\right)^\ell\leq N^{\ell-1}\sum_{i=1}^Na_i^\ell.
        $
    \end{proof}
    By Proposition \ref{analytic alpha_k,G}, and since $M/k\ell \subset M/k$,
    \begin{align*}
        \alpha_{k\ell,G(H)}&=\sup\left\{c>0\,:\,\int_X\Big(\sum_{\sigma\in H}\left|s_{k\ell,\sigma u}\right|_{h_{k\ell}}^2\Big)^{-\frac{c}{k\ell}}d\mu<\infty,~\forall~u\in P\cap M/k\ell\right\}\\
        &\leq\sup\left\{c>0\,:\,\int_X\Big(\sum_{\sigma\in H}\left|s_{k\ell,\sigma u}\right|_{h_{k\ell}}^2\Big)^{-\frac{c}{k\ell}}d\mu<\infty,~\forall~u\in P\cap M/k\right\}\\
        &=\sup\left\{c>0\,:\,\int_X\Big(\sum_{\sigma\in H}\left|s_{k,\sigma u}^{\otimes\ell}\right|_{h_k^\ell}^2\Big)^{-\frac{c}{k\ell}}d\mu<\infty,~\forall~u\in P\cap M/k\right\}\\
        &=\sup\left\{c>0\,:\,\int_X\Big(\sum_{\sigma\in H}\left|s_{k,\sigma u}\right|_{h_k}^{2\ell}\Big)^{-\frac{c}{k\ell}}d\mu<\infty,~\forall~u\in P\cap M/k\right\}\\
        &=\sup\left\{c>0\,:\,\int_X\Big(\sum_{\sigma\in H}\left|s_{k,\sigma u}\right|_{h_k}^2\Big)^{-\frac{c}{k}}d\mu<\infty,~\forall~u\in P\cap M/k\right\}\\
        &=\alpha_{k,G(H)},
    \end{align*}
    where Lemma \ref{xlLemma} was invoked in the penultimate equality.
\end{proof}

\bprop
\lb{divisibleProp}
Let $G(H)\subseteq\Aut X$ \eqref{GHEq} be a subgroup generated by $(S^1)^n$ and a subgroup $H$ of $\Aut(P)$.
There exists $k_0\in\NN$ such that 
$\alpha_{K,G(H)}=\alpha_{G(H)}$ for all $K$ divisible by $k_0$. 
\eprop

\bremark
\lb{Minu0Rem}
In fact, the proof will show that $k_0$ is determined by the fan as follows: 
let $u_0\in P^H$ attain
$\sup_{u\in P^H}\max_i\langle u,v_i\rangle$.
Since $P^H$ is a convex polytope $u_0$ will be a vertex of $P^H$, i.e., cut out by $P^H$ and the supporting hyperplanes of $P$ containing $u_0$. The equations defining $P^H$ are determined by $H\subset GL(M)$ hence are linear equations with integer coefficients. So are the equations cutting out $\partial P$. It follows that $u_0$ is a rational point, i.e., $u_0\in M/k_0$ for some $k_0$. Let $k_0$ be the smallest such positive integer.
The fact that $u_0$ is a rational point is originally due
to Song \cite[p. 1257]{Song} who proved that 
$\alpha_{k_0,G(\Aut P)}=\alpha_{G(\Aut P)}$.

\eremark

\bpf
    We can further simplify \eqref{alpha_G}. 
    Recall \eqref{definition of p}.
    By the geometric-arithmetic mean inequality,
    \begin{align*}
        \left(\frac{1}{|H|}\sum_{\sigma\in H}\left|s_{k,\sigma u}\right|_{h_k}^2\right)^{-\frac{\alpha}{k}}\leq\prod_{\sigma\in H}\left|s_{k,\sigma u}\right|_{h_k}^{-\frac{2\alpha}{|H|k}}
        &=\left|s_{|H|k,\pi_H(u)
        }\right|_{h_{|H|k}}^{-\frac{2\alpha}{|H|k}}
        \cr
        &=\left(\frac{1}{|H|}\sum_{\sigma\in H}\left|s_{|H|k,\sigma\pi_H(u)
        }\right|_{h_{|H|k}}^2\right)^{-\frac{\alpha}{|H|k}},
    \end{align*} 
    with the last equality since $\pi_H(u)$ is fixed by $H$
    (recall \eqref{PHEq}--\eqref{definition of p})
    so each term in the sum is identical.
    Therefore the supremum in \eqref{alpha_G} is unchanged 
    if restricted to those $u$ fixed by $H$. Thus, using
    Lemma \ref{AuxOneSecLCTLemma} (proven below), \eqref{alpha_G} simplifies to
    (recall \eqref{PHEq})
    \begin{align*}
        \alpha_{G(H)}
        &=
        \sup\left\{c>0\,:\,\int_X\left|s_{k,u}\right|_{h_k}^{-\frac{2c}{k}}d\mu<\infty,~\forall~u\in P^H\cap\frac{1}{k}M,~k\in\NN\right\}
        \\
        &=
        \inf\left\{k\cdot\lct(s_{k,u})\,:\,~u\in P^H\cap\frac{1}{k}M,~k\in\NN\right\}
        \\
        &=\inf_{k\in\NN}\inf_{u\in P^H\cap\frac{1}{k}M}\frac{1}{
        \max_i\langle u,v_i\rangle+1}\\
        &=\inf_{u\in P^H}\frac{1}{\max_i\langle u,v_i\rangle+1}\\
        &=\frac1{\max_i\langle u_0,v_i\rangle+1},
    \end{align*}
    where we used the notation of Remark \ref{Minu0Rem}. By that same Remark,
    $u_0\in P^H\cap\frac{1}{K}M$ whenever $K\in\NN$ is divisible by $k_0$. In particular, by \eqref{alpha_{k,G}}, and using Lemma \ref{AuxOneSecLCTLemma} again,
    \begin{align*}
        \alpha_{K,G(H)}&\leq\sup\left\{c>0\,:\,\int_X\left|s_{K,u_0}\right|_{h_K}^{-\frac{2c}{K}}d\mu<\infty\right\}\\
        &=K\cdot\lct(s_{K,u_0})\\
        &=\min_i\frac{1}{\langle u_0,v_i\rangle+1}\\
        &=\alpha_{G(H)}.\\
\end{align*}
Since by definition $\alpha_{G(H)}\le \alpha_{K,G(H)}$, 
    equality is achieved and $\alpha_{K,G(H)}=\alpha_{G(H)}$ for all $K$ divisible by $k_0$. \epf

\blem
\lb{AuxOneSecLCTLemma}
For $u\in P\cap k^{-1}M$,
$\displaystyle\lct (s_{k,u})=\frac1k\frac1{1+\max_i\lan u,v_i\ran}$
(recall (\ref{Delta1Eq}) and (\ref{skuEq})).
\elem
\bpf
This is well-known (see, e.g., \cite[Corollary 7.4]{BJ20}, \cite[Theorem 4.1]{LSY15}).
It is also a consequence of Proposition \ref{Orbit lct} proven
below (put $\cF=\{u\}$).
\epf

\bremark
\lb{DivisibleRemark}
According to Proposition \ref{divisibleProp}, the sequence $\{\alpha_{k,{G(H)}}\}_{k\in\NN}$ 
is constant (equal to $\alpha_{G(H)}$) along the subsequence $k_0,2k_0,\ldots$. 
Proposition \ref{divisibleProp} does not yield information for all $k\in\NN$ unless $k_0=1$,
    and examples show (see \S\ref{ExamplesSec}) 
    that oftentimes $k_0>1$. 
Moreover, even though the sequence also satisfies
$\alpha_{k,{G(H)}}\ge \alpha_{k\ell,{G(H)}}\ge \alpha_{G(H)}$ for any $k,\ell\in\NN$
(Proposition \ref{weakmonotoneProp}), these facts combined are
still not enough to conclude Tian's conjecture without further work. 
For instance, the sequence
$$
a_k:=\begin{cases} \alpha_{G(H)}, & \hbox{\ $k$ even,}\cr
\alpha_{G(H)}+k^{-1}, & \hbox{\ $k$ odd,}\cr
\end{cases}
$$
satisfies these requirements for $k_0=2$ (and also satisfy $\lim_ka_k=\alpha_{G(H)}$). 
Perhaps a more natural sequence that satisfies all the requirements
and even for the $k_0$ of Remark \ref{Minu0Rem} is
$$
a_k:=
\inf\left\{k\cdot\lct(s_{k,u})\,:\,~u\in P^H\cap\frac{1}{k}M\right\},
$$      
with the proof of Proposition \ref{divisibleProp} showing that
$a_{k_0\ell}=a_{k_0}=\alpha_{G(H)}$ for $\ell\in\NN$ but possibly $a_k>\alpha_{G(H)}$
for $k$ not divisible by $k_0$.
    Thus, we are led  to develop more refined
    estimates that are the topic of the next section.
\eremark

\section{Estimating singularities associated to orbits}
\lb{OrbitsEstimateSec}

\subsection{Real singularity exponents and support functions}

In this subsection we develop a key new technical estimate 
that expresses real singularity exponents associated to 
collections of toric monomials in terms of support functions.

\bdefn
    For non-empty finite sets $\cF,\cU\subseteq\RR^n$ and $k\in\NN$,
    $$
        c_k({\cF,\cU}):=\sup\left\{c\in(0,1)\,:\,\int_{\RR^n}\frac{\left(\sum_{u\in \cF}e^{\langle ku,x\rangle}\right)^{-\frac{c}{k}}}{\left(\sum_{u\in \cU}e^{\langle ku,x\rangle}\right)^\frac{1-c}{k}}dx<\infty\right\}.
    $$
\edefn
 
\begin{prop}
\label{generalizeSong}
     For $\cF,\cU\subseteq\RR^n$  non-empty finite sets
    with $0\in\Int \co\cU$ (recall (\ref{coA})), 
\begin{align*}
    c_k({\cF,\cU})
    &=\sup\big\{c\in(0,1)\,:\,0\in (1-c)\co\cU+{c}\co\cF\big\}\cr
    &=\sup\left\{c\in(0,1)\,:\,\Big(-\frac{c}{1-c}\co\cF\Big)\cap 
        \co\cU\neq\emptyset\right\}>0.
\end{align*}
    In particular, $c_k({\cF,\cU})$ is independent of $k$.
\end{prop}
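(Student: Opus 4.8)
The statement reduces to a single integrability criterion for a product of toric monomials on $\RR^n$, so the plan is to analyze the behavior of the integrand at infinity, separately along each ray, and match the resulting condition to the stated convex-geometric one. First I would substitute $x = R\xi$ with $\xi$ in the unit sphere and let $R \to \infty$: for fixed direction $\xi$, the sum $\sum_{u\in\cF}e^{R\langle ku,\xi\rangle}$ grows like $e^{R k\, h_{\co\cF}(\xi)}$ (up to polynomial factors), where $h$ is the support function from \eqref{hKEq}, and similarly for $\cU$. Thus the integrand decays/grows like $e^{-R k[\, \frac{c}{k}h_{\co\cF}(\xi) + \frac{1-c}{k}h_{\co\cU}(\xi)\,]} = e^{-R[\, c\, h_{\co\cF}(\xi) + (1-c)\, h_{\co\cU}(\xi)\,]}$, and the exponential rate is manifestly independent of $k$ — the $k$'s cancel. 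The integral over $\RR^n$ converges if and only if this exponent is strictly positive for every direction $\xi$, i.e.
\[
  c\, h_{\co\cF}(\xi) + (1-c)\, h_{\co\cU}(\xi) > 0 \qquad \text{for all } \xi \neq 0.
\]
(Near the origin there is no problem: $0\in\Int\co\cU$ forces $h_{\co\cU}>0$ on the sphere, so the integrand is locally bounded; and convergence of $\int_{\RR^n}$ of a positive function is equivalent to a uniform positive-rate exponential decay along every ray, which one checks by a polar-coordinates / compactness argument.)

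\smallskip
Next I would translate the condition $c\,h_{\co\cF}+(1-c)h_{\co\cU}>0$ on the sphere into the convex-geometric statement. By positive homogeneity it is equivalent to $c\,h_{\co\cF}(\xi)+(1-c)h_{\co\cU}(\xi) > 0$ for all $\xi\in\RR^n\setminus\{0\}$, i.e. $h_{c\,\co\cF + (1-c)\co\cU}(\xi) > 0$ for all $\xi\neq 0$ (support functions add under Minkowski sum). A compact convex set $K$ satisfies $h_K(\xi)>0$ for all $\xi\neq0$ if and only if $0\in\Int K$; allowing the non-strict boundary case, $h_K(\xi)\ge 0$ for all $\xi$ iff $0\in K$. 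Since $c_k$ is defined as a supremum of $c$ for which the (open) integrability holds, the threshold is exactly $\sup\{c\in(0,1) : 0\in c\,\co\cF + (1-c)\co\cU\}$, which is the first displayed expression. Rewriting $0\in c\,\co\cF+(1-c)\co\cU$ as $\big(-\tfrac{c}{1-c}\co\cF\big)\cap\co\cU\neq\emptyset$ is elementary algebra (divide by $1-c$ and move terms), giving the second expression. Finally, positivity of the supremum: since $0\in\Int\co\cU$, for $c$ small enough $c\,\co\cF+(1-c)\co\cU$ still contains $0$ in its interior (a small perturbation of $\co\cU$), so the set of admissible $c$ is a nonempty subinterval of $(0,1)$ containing an interval $(0,\eps)$, hence the sup is $>0$.

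\smallskip
\textbf{Main obstacle.} The delicate point is making the passage from ``pointwise exponential rate along each ray'' to ``convergence of the $n$-dimensional integral'' fully rigorous, including the treatment of directions $\xi$ where the exponent vanishes and the polynomial (non-exponential) corrections to $\sum_u e^{\langle ku,x\rangle}$ coming from several lattice points $u$ achieving the maximum of $\langle u,\xi\rangle$. The clean way is: use $\max_{u\in\cF}e^{\langle ku,x\rangle} \le \sum_{u\in\cF}e^{\langle ku,x\rangle} \le |\cF|\max_{u\in\cF}e^{\langle ku,x\rangle}$ to replace each sum by an exponential of a piecewise-linear convex function (namely $k\,h_{\co\cF}$ evaluated appropriately via the lemma $\|\cdot\|_{A}=\max_j\langle\cdot,v_j\rangle$ / \eqref{hKEq}), so up to a multiplicative constant the integrand is exactly $\exp\!\big(-c\,h_{\co\cF}(x)-(1-c)h_{\co\cU}(x)\big)$; then convergence of $\int_{\RR^n}e^{-g(x)}dx$ for $g$ convex, positively homogeneous of degree $1$, is equivalent to $g>0$ on $\RR^n\setminus\{0\}$ by a direct polar-coordinate computation (the radial integral $\int_0^\infty r^{n-1}e^{-r g(\xi)}dr$ is finite iff $g(\xi)>0$, and one integrates the bound over the sphere using continuity/compactness, handling the strict-versus-nonstrict issue by noting the supremum defining $c_k$ is an open condition). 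Since the entire analysis produces exponents with every $k$ cancelled, independence of $k$ is automatic; this should be stated explicitly at the end.
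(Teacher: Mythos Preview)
Your proposal is correct and follows essentially the same route as the paper: polar coordinates, the two-sided bound $\max\le\sum\le|\cA|\max$ to replace each sum by $e^{kr\,h_{\co\cA}(\nu)}$, reduction to positivity of $g_c:=h_{c\,\co\cF+(1-c)\co\cU}$ on the sphere (whence the $k$'s cancel), and the identification of $g_c>0$ on $S^{n-1}$ with $0\in\Int\big(c\,\co\cF+(1-c)\co\cU\big)$. The paper organizes the last step into two short claims (Claims~\ref{supportfnClaim}--\ref{supportfn2Claim}) and an explicit formula for the threshold via $\min_{S^{n-1}}h_{\cF}/h_{\cU}$, but your direct passage from the open condition $0\in\Int K_c$ to the closed one $0\in K_c$ at the supremum (using $K_c=\lambda K_{c_1}+(1-\lambda)\co\cU$ and $0\in\Int\co\cU$) is equivalent.
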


\begin{proof}
Using polar coordinates $x=r\nu$ and $dx=r^{n-1}dr\wedge dS(\nu)$
with $r\in\RR_+,\, \nu\in S^{n-1}(1):=\{x\in\RR^n\,:\, |x|=1\}$,
    \beq
    \lb{integrationpolarcoordEq}
        \int_{\RR^n}\frac{\left(\sum_{u\in \cF}e^{\langle ku,x\rangle}\right)^{-\frac{c}{k}}}{\left(\sum_{u\in \cU}e^{\langle ku,x\rangle}\right)^\frac{1-c}{k}}dx=\int_{S^{n-1}(1)}f_c(\nu)dS(\nu),
    \eeq
where $f_c:S^{n-1}(1)\ra\RR_+$ is defined by
    $$
        f_c(\nu):=\int_0^\infty\frac{\left(\sum_{u\in \cF}e^{kr\langle u,\nu\rangle}\right)^{-\frac{c}{k}}}{\left(\sum_{u\in \cU}e^{kr\langle u,\nu\rangle}\right)^\frac{1-c}{k}}r^{n-1}dr.
    $$
    Notice that for any finite set $\cA\subset\RR^n$ and $r\in\RR_+$
    (recall (\ref{hKEq})),
    $$
        e^{kr h_{\cA}(\nu)}
        \leq
        \sum_{u\in \cA}e^{kr\langle u,\nu\rangle}\leq|\cA|e^{kr h_{\cA}(\nu)}.
    $$
    Hence for $c\in(0,1)$,
    \beq
    \lb{twosidedestimatecUcF}
        \frac{|\cF|^{-\frac{c}{k}}}{|\cU|^\frac{1-c}{k}}e^{-krg_c(\nu)}\leq\frac{\left(\sum_{u\in \cF}e^{kr\langle u,\nu\rangle}\right)^{-\frac{c}{k}}}{\left(\sum_{u\in \cU}e^{kr\langle u,\nu\rangle}\right)^\frac{1-c}{k}}\leq e^{-krg_c(\nu)},
    \eeq
    where
    \begin{align}
    \lb{gcEq}
        g_c(x)&:=c\max_{u\in \cF}\langle u,x\rangle+(1-c)\max_{u\in \cU}\langle u,x\rangle
        \cr
        &=h_{c\cF+(1-c)\cU}(x)\cr
        &=c\max_{u\in \co\cF}\langle u,x\rangle+(1-c)\max_{u\in \co\cU}\langle u,x\rangle
        \cr
        &=ch_{\co\cF}(x)+(1-c)h_{\co\cU}(x)
        =h_{c\co\cF+(1-c)\co\cU}(x),
    \end{align}
    where $A+B:=\{x+y\,:\,x\in A,y\in B\}$ is the Minkowski sum.

We need the following property of support functions.
\bclaim
\lb{supportfnClaim}
Let $\cA\subset\RR^n$ be a nonempty finite set. Then
$h_\cA|_{S^{n-1}(1)}> 0$ if  and only if
$0\in\Int\co\cA$.

\eclaim

\bpf
Note first that 
\beq
\lb{hcAcocAEq}
h_\cA=h_{\co\cA}:
\eeq
$h_\cA\le h_{\co\cA}$ since $\cA\subset\co\cA$, 
while if $h_{\co\cA}(y)=\lan a(y),y\ran$ for some $a(y)\in\co\cA$,
and writing $a(y)=\sum_{i=1}^{|\cA|}\lambda_ia_i$
with $\sum_{i=1}^{|\cA|}\lambda_i=1$ and $\lambda_i\ge0$, 
where $\cA=\{a_i\}$ yields $h_{\co\cA}(y)=\sum_{i=1}^{|\cA|}\lambda_i\lan a_i,y\ran
\le\sum_{i=1}^{|\cA|}\lambda_ih_{\cA}(y)=h_\cA(y)$.

Next, if $0\in\Int\co\cA$ then $\eps B_2^n\subset\co\cA$ where 
$B_2^n:=\{x\in\RR^n\,:\, |x|\le 1\}$ and
$h_{\co\cA}\ge h_{\eps B_2^n}=\eps h_{B_2^n}=\eps$ when restricted
to $S^{n-1}(1)$.

Conversely, if $0\not\in\Int\co\cA$
then by convexity of $\co\cA$ there exists
a hyperplane $H$ passing through a boundary point of $\co\cA$ 
so that $\co\cA$ lies on one
side of it and $0$ lies on the other side of it.
If $\nu\in S^{n-1}(1)$ is normal to $H$ and points toward the side not containing $\co\cA$
then $h_{\co\cA}(\nu)\le0$.
\epf

\bclaim
\lb{supportfn2Claim}
Let $\cA\subset\RR^n$ be a nonempty finite set. Then
$h_\cA|_{S^{n-1}(1)}$ is somewhere negative  if and only if
$0\in \Int(\RR^n\setminus\co\cA)=\RR^n\setminus\co\cA$.

\eclaim
\bpf
Suppose $0\not\in \Int(\RR^n\setminus\co\cA)=\RR^n\setminus\co\cA$.
Equivalently $0\in\Conv\cA$. Then $h_\cA\geq h_{\{0\}}=0$ so $h_\cA$
is nowhere negative. 

Conversely, if $0\notin\Conv\cA$ then by convexity of $\co\cA$ there exists
a hyperplane $H$ passing through the origin 
so that $\co\cA$ lies {\it strictly} on one
side. 
Let $\nu\in S^{n-1}(1)$ be a unit normal to $H$ that points toward the side not containing $\co\cA$.
By the strictness mentioned above and compactness of $\co\cA$,  for some small $\eps>0$, 
$\co\cA+\eps\nu$  still lies on the same side of $H$
as $\co\cA$. Hence, as in the proof of Claim \ref{supportfnClaim}, 
$h_{\co\cA+\eps\nu}(\nu)\le0$. In other words,
$$
h_{\co\cA}(\nu)=h_{\co\cA+\eps\nu}(\nu)-\eps|\nu|^2\le -\eps,
$$
as claimed.
\epf

\bcor
\lb{inclusionCor}
For $\cF,\cU\subseteq\RR^n$ non-empty finite sets with $0\in\Int \co\cU$ (recall (\ref{coA})),
\beq
\lb{bigequalityEq}
\sup\left\{c\in(0,1)\,:\,\left(-\frac{c}{1-c}\co\cF\right)\cap\co\cU\neq\emptyset\right\}
                =
        \begin{cases}
        1,&0\in \co\cF,\\
        \disp\bigg[{1-\min_{S^{n-1}(1)}\frac{h_{\cF}}{h_{\cU}}}
        \bigg]^{-1},&0\notin \co\cF.
        \end{cases}
\eeq
\ecor

\bpf
By assumption $0\in\Int\co\cU$, so by Claim
 \ref{supportfnClaim}, $h_{\cU}|_{S^{n-1}(1)}>0$.
By \eqref{gcEq},
\beq
\lb{gc2Eq}
g_c=h_{\Conv\cU}\cdot\left(1-c\left(1-\frac{h_{\Conv\cF}}{h_{\Conv\cU}}\right)\right).
\eeq

\medskip
\noindent
{\it Case 1: $0\in\Conv\cF$.}
If $0\in\Conv\cF$, i.e., $h_{\Conv\cF}\geq0$, then $g_c\geq h_{\Conv\cU}\cdot\left(1-c\right)>0$ for any $c\in(0,1)$. By \eqref{gc2Eq} and Claim
 \ref{supportfnClaim} then $0\in\Int(c\Conv\cF+(1-c)\Conv\cU)\subseteq c\Conv\cF+(1-c)\Conv\cU$ and so 
\eqref{bigequalityEq} holds in this case.

\medskip
\noindent
{\it Case 2: $0\not\in\Conv\cF$.}
By assumption and Claim  \ref{supportfn2Claim}, $h_{\Conv\cF}<0$ somewhere, by continuity/compactness let $\nu_0$ be a minimizer of the function
$$
\frac{h_{\Conv\cF}}{h_{\Conv\cU}}
$$
on $S^{n-1}(1)$. In particular,
$$
\frac{h_{\Conv\cF}\left(\nu_0\right)}{h_{\Conv\cU}\left(\nu_0\right)}=\min_{S^{n-1}(1)}\frac{h_{\cF}}{h_{\cU}}<0.
$$
Set
$$
c\left(\cF,\cU\right):=\left[1-\frac{h_{\Conv\cF}\left(\nu_0\right)}{h_{\Conv\cU}\left(\nu_0\right)}\right]^{-1}=\left[1-\min_{S^{n-1}(1)}\frac{h_{\cF}}{h_{\cU}}\right]^{-1}.
$$

First, if $c\in(0,c(\cF,\cU))$, for all $\nu\in S^{n-1}(1)$,
\begin{align*}
g_c\left(\nu\right)&=h_{\Conv\cU}\left(\nu\right)\left(1-c\left(1-\frac{h_{\Conv\cF}\left(\nu\right)}{h_{\Conv\cU}\left(\nu\right)}\right)\right)\\
&\geq h_{\Conv\cU}\left(\nu\right)\left(1-c\left(1-\frac{h_{\Conv\cF}\left(\nu_0\right)}{h_{\Conv\cU}\left(\nu_0\right)}\right)\right)\\
&=h_{\Conv\cU}\left(\nu\right)\left(1-\frac{c}{c(\cF,\cU)}\right)>0.
\end{align*}
Thus, by \eqref{gcEq}
and Claim  \ref{supportfnClaim}, $0\in\Int(c\Conv\cF+(1-c)\Conv\cU)\subseteq c\Conv\cF+(1-c)\Conv\cU$; in particular,
$$
\Big(-\frac{c}{1-c}\co\cF\Big)\cap 
        \co\cU\neq\emptyset.
$$
So,
$$\sup\left\{c\in(0,1)\,:\,\Big(-\frac{c}{1-c}\co\cF\Big)\cap 
        \co\cU\neq\emptyset\right\}\ge c(\cF,\cU).
$$

Second, if $c\in(c(\cF,\cU),1)$,
\begin{align*}
g_c\left(\nu_0\right)&=h_{\Conv\cU}\left(\nu_0\right)\left(1-c\left(1-\frac{h_{\Conv\cF}\left(\nu_0\right)}{h_{\Conv\cU}\left(\nu_0\right)}\right)\right)\\
&=h_{\Conv\cU}\left(\nu_0\right)\left(1-\frac{c}{c(\cF,\cU)}\right)<0.
\end{align*}
Thus, by Claim  \ref{supportfn2Claim}, $0\in\RR^n\setminus(c\Conv\cF+(1-c)\Conv\cU)$, equivalently,
$$
\Big(-\frac{c}{1-c}\co\cF\Big)\cap 
        \co\cU=\emptyset,
$$
and
$$
\sup\left\{c\in(0,1)\,:\,\Big(-\frac{c}{1-c}\co\cF\Big)\cap 
        \co\cU\neq\emptyset\right\}\le c(\cF,\cU).
$$
Thus, also in this case \eqref{bigequalityEq} holds.
\epf

The following corollary follows from the proof of Corollary \ref{inclusionCor}.
\bcor
\lb{c0Cor}
For $\cF,\cU\subseteq\RR^n$ non-empty finite sets with $0\in\Int \co\cU$ (recall (\ref{coA})),
let
\begin{align*}
c(\cF,\cU)
:&=\sup\left\{c\in(0,1)\,:\,\left(-\frac{c}{1-c}\co\cF\right)\cap\co\cU\neq\emptyset\right\}
\cr
&=\sup\big\{c\in(0,1)\,:\,0\in (1-c)\co\cU+{c}\co\cF\big\}.
\end{align*}
Recall \eqref{gcEq}. If $c<c(\cF,\cU)$, then $g_c>0$; if $c>c(\cF,\cU)$, then $g_c<0$ somewhere.
\ecor
We can now complete the proof of Proposition \ref{generalizeSong}.
Set
$$
c(\cF,\cU):=
\sup\left\{c\in(0,1)\,:\,\Big(-\frac{c}{1-c}\co\cF\Big)\cap 
        \co\cU\neq\emptyset\right\}.
$$
By assumption $0\in\Int\co\cU$, thus $c(\cF,\cU)>0$.
We consider two cases.

\medskip
\noindent
{\it Case 1: $c\in(0,c(\cF,\cU))$.}
By the proof of Corollary \ref{inclusionCor}, $g_c|_{S^{n-1}(1)}>0$.
By continuity/compactness it attains its minimum $g_c(\hat{\nu})>0$. Therefore,
\beq
\lb{fcnuintEq}
f_c(\nu)\leq\int_0^\infty e^{-krg_c(\nu)}r^{n-1}dr\leq\int_0^\infty e^{-krg_c(\hat{\nu})}r^{n-1}dr<\infty,    
\eeq
    so by \eqref{integrationpolarcoordEq}, $c_k({\cF,\cU})\geq c$, i.e., $c_k({\cF,\cU})\geq c(\cF,\cU)$.
 
\medskip   
\noindent
{\it Case 2: $c\in(c(\cF,\cU),1)$.}
By the proof of Corollary \ref{inclusionCor},
    $g_c(\nu_0)<0$ for some open neighborhood $U\subset S^{n-1}(1)$. Thus, by \eqref{twosidedestimatecUcF}, for some constant
    $C=C(c,k,\cF,\cU)>0$,
    $$
        f_c(\nu)\geq C\int_0^\infty e^{-krg_c(\nu)}r^{n-1}dr=\infty, \q
        \hbox{ for $\nu\in U$},
    $$
    so $c_k({\cF,\cU})\leq c$, i.e.,  $c_k({\cF,\cU})\leq c(\cF,\cU)$.

    In conclusion, $c_k({\cF,\cU})=c(\cF,\cU)$ and Proposition \ref{generalizeSong} is proved.
\end{proof}

\subsection{Complex singularity exponents}

Now, we go back to our setting of estimating complex
singularity exponents, where $P$ was a reflexive polytope coming
from a fan.
For any $k\in\NN$ and a non-empty subset $\cF\subseteq P\cap M/k$
set    
\beq
\lb{ckcFEq}
       c_k(\cF):=\sup\left\{c\in(0,1)\,:\,\int_X\left(\sum_{u\in \cF}\left|s_{k,u}\right|_{h_k}^2\right)^{-\frac{c}{k}}d\mu<\infty\right\}.
\eeq

\bremark
Consider
$$
\widetilde{c_k}(\cF):=\sup\left\{c\in(0,\infty)\,:\,\int_X\left(\sum_{u\in \cF}\left|s_{k,u}\right|_{h_k}^2\right)^{-\frac{c}{k}}d\mu<\infty\right\}.
$$
Then, by definition \eqref{lctSigmaEq},
 $\widetilde{c_k}(\cF)=k\,\lct\big|\Span\{s_{k,u}\}_{u\in\cF}\big|$.
However,  Proposition \ref{generalizeSong} would not be applicable then.
The point is that for the proof of Tian's conjecture we need to compute
equivariant {\it global} log canonical thresholds and there will always be
`admissible' invariant linear series for which $k$ times the log canonical threshold
is at most 1 (i.e., the ``worst" ones will not have $\widetilde{c_k}>1$), i.e., for which ${c_k}=\widetilde{c_k}$. This is essentially
because our $H$ are linear groups so $\{0\}$ is always an $H$-orbit
and $\lct(s_{k,0})=1/k=c_k(\{0\})=\widetilde{c_k}(\{0\})$ (by Lemma \ref{AuxOneSecLCTLemma}).

\eremark

\begin{proposition}\label{Orbit lct}
    For any $k\in\NN$ and a non-empty subset $\cF\subseteq P\cap M/k$,
    $$
        c_k(\cF)=\sup\left\{c\in(0,1)\,:\,P\cap\Big(-\frac{c}{1-c}\co\cF\Big)\neq\emptyset\right\}>0.
    $$
    In particular, it is independent of $k$.
\end{proposition}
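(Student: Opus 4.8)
The plan is to reduce the global (on $X$) integrability statement to a local one near the torus-fixed points, rewrite the local integral in the coordinates $w_i=\log z_i$ of \eqref{CncoordwEq} so that it becomes an integral over a shifted orthant in $\RR^n$ of exactly the type handled by Proposition \ref{generalizeSong}, and then invoke that proposition with a carefully chosen pair $(\cF,\cU)$.

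First I would note that by a partition of unity and $(\CC^*)^n$-invariance of the integrand (which holds since $h_k$ and $\mu$ are torus-invariant, by Lemma \ref{hkGinvLem} and \eqref{canonicalomeganEq}), integrability of $\big(\sum_{u\in\cF}|s_{k,u}|_{h_k}^2\big)^{-c/k}$ against $d\mu$ on all of $X$ is equivalent to integrability in a neighborhood of each torus-fixed point $x_F$, one for each maximal cone $\sigma_F$ (equivalently each facet $F$ of $Q$, equivalently each vertex $p_F$ of $P$). On the affine chart $U_{\sigma_F}\cong\CC^n$ centered at $x_F$, using \eqref{-kK_X metric-h_k} and \eqref{canonicalomeganEq} one gets
\[
  \int_{U_{\sigma_F}}\Big(\sum_{u\in\cF}|s_{k,u}|_{h_k}^2\Big)^{-c/k}d\mu
  \;\asymp\;
  \int_{\RR^n_{\ge -C}}
  \frac{\big(\sum_{u\in\cF}e^{\langle k(u-p_F),x\rangle}\big)^{-c/k}}
       {\big(\sum_{u\in P\cap M/k}e^{\langle k(u-p_F),x\rangle}\big)^{(1-c)/k}}\,dx,
\]
after the substitution to the real variables $x_i$ and integrating out the compact angular variables $\theta_i$; here the exponent vectors are translated by the vertex $p_F$ because the chart is centered at $x_F$, and the region of integration is a half-space/orthant reflecting which directions point into the chart. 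Since $0\in\Int P$ and $p_F\in\Ver P$, the translated set $\cU_F:=(P\cap M/k)-p_F$ contains $0$ in its convex hull but $0$ is a vertex of $\co\cU_F$, not interior — so Proposition \ref{generalizeSong} as stated does not directly apply on all of $\RR^n$. The resolution is that the integral is only over the cone of directions $x$ pointing into $U_{\sigma_F}$, which is (up to translation) the dual cone $\sigma_F^\vee$; on that cone the relevant support/gauge functions behave exactly as in the full-space analysis localized to $\sigma_F^\vee$, and the convex-geometric criterion localizes accordingly.

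Next, the key point is to show that the supremum of the $c$'s for which every such local integral converges equals $\sup\{c\in(0,1):P\cap(-\tfrac{c}{1-c}\co\cF)\neq\emptyset\}$. For this I would run, facet by facet, the argument of Proposition \ref{generalizeSong} and Corollary \ref{inclusionCor}, but with $S^{n-1}(1)$ replaced by the spherical slice $S^{n-1}(1)\cap\sigma_F^\vee$ (the directions relevant to the chart $U_{\sigma_F}$), and with $\cU$ taken to be the vertex set $\Ver P$ (equivalently $P$ itself, since $h_{\co\cU}=h_P$ and the integral denominator is comparable to $e^{krh_P(\nu)}$ by Lemma \ref{norm of set}-type estimates). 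The finiteness condition becomes: the piecewise-linear function $g_c(\nu)=c\,h_{\co\cF}(\nu)+(1-c)h_P(\nu)$ is positive on each $S^{n-1}(1)\cap\sigma_F^\vee$; ranging over all $F$ this is positivity of $g_c$ on all of $S^{n-1}(1)$, i.e., on all of $\RR^n$ by homogeneity, which by Claim \ref{supportfnClaim} is equivalent to $0\in\Int\big(c\,\co\cF+(1-c)P\big)$. By the same convexity dichotomy as in Corollary \ref{c0Cor}, the threshold $c$ is exactly where $0$ enters the boundary of $c\,\co\cF+(1-c)P$, i.e., where $0\in c\,\co\cF+(1-c)P$, equivalently $-\tfrac{c}{1-c}\co\cF$ meets $P$. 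Positivity of this supremum follows because $0\in\Int P$ guarantees $0\in\Int\big(c\,\co\cF+(1-c)P\big)$ for all small $c$, so $c_k(\cF)>0$. Finally, $k$-independence is immediate since the limiting convex-geometric condition $P\cap(-\tfrac{c}{1-c}\co\cF)\neq\emptyset$ involves neither $k$ nor the lattice — only the real convex hull $\co\cF$ and the polytope $P$.

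The main obstacle I anticipate is the bookkeeping at the boundary between charts — verifying rigorously that restricting to the directional cone $\sigma_F^\vee$ in each chart and then taking the union over all maximal cones reproduces precisely the global condition on $S^{n-1}(1)$, with no contribution lost or double-counted, and that the translation by the vertex $p_F$ together with the $(\prod|z_i|^2)^k$ factor in \eqref{-kK_X metric-h_k} indeed cancels to leave the clean integrand above (in particular that the Ehrhart-many monomials in the denominator combine, via the two-sided bound $e^{krh_P}\le\sum e^{kr\langle u-p_F,\nu\rangle}\le |P\cap M/k|\,e^{krh_P}$, into $e^{krh_P(\nu)}$ up to a harmless constant, so that $\cU$ may be replaced by $\Ver P$). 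Once that reduction is in place, the convex-geometric conclusion is a direct application of the machinery already built in \S\ref{OrbitsEstimateSec}.
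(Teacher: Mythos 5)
Your approach over-engineers a step that the paper handles in one line, and in doing so it creates an obstacle that the paper's route never encounters.

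The crucial point you are missing is that the volume form $\mu=\mu_k=h_k^{1/k}$ of \eqref{canonicalomeganEq} was chosen in \S\ref{NaturalvolumeformSec} precisely so that the integral over $X$ is \emph{exactly} the $\RR^n$ integral, with no localization, no translation, and no comparability constants. Since $X\setminus(\CC^*)^n$ has measure zero, $\int_X=\int_{(\CC^*)^n}$; in the coordinates $(x,\theta)$ of \eqref{CncoordwEq}, the factor $\big(\sum_{u\in P\cap M/k}e^{\langle ku,x\rangle}\big)^{c/k}$ coming from $|s_{k,u}|^2_{h_k}$ combines with the denominator of $\mu_k$ to give exactly
$$
\int_X\Big(\sum_{u\in\cF}|s_{k,u}|_{h_k}^2\Big)^{-c/k}d\mu
=(2\pi)^n\int_{\RR^n}\frac{\big(\sum_{u\in\cF}e^{\langle ku,x\rangle}\big)^{-c/k}}{\big(\sum_{u\in P\cap M/k}e^{\langle ku,x\rangle}\big)^{(1-c)/k}}\,dx.
$$
Now Proposition \ref{generalizeSong} applies verbatim with $\cU=P\cap M/k$, and the hypothesis $0\in\Int\co\cU$ holds because $\co(P\cap M/k)=P$ (the vertices of $P$ lie in $M\subset M/k$ by Lemma \ref{PIntegralLem}) and $0\in\Int P$. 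That is the entire proof.

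Your chart-by-chart localization around fixed points is not just a different route --- it is the source of the trouble you flag. Translating exponents by a vertex $p_F$ pushes $0$ to the \emph{boundary} of the translated convex hull, so Proposition \ref{generalizeSong} cannot be applied to the local integral; you would need a bespoke half-space/conical version of Claims \ref{supportfnClaim}--\ref{supportfn2Claim} and Corollary \ref{c0Cor} restricted to $S^{n-1}(1)\cap\sigma_F^\vee$, and then a patching lemma showing that these sectors cover $S^{n-1}(1)$ with no gaps. None of that machinery exists in the paper, and your proposal leaves it as the acknowledged ``main obstacle.'' There is also an internal inconsistency: in the first paragraph you translate by $p_F$, while in the second you revert to the untranslated $g_c$ and $P$ without explaining why the translation washes out. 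The fix is to drop the localization entirely: the integral over the open dense orbit \emph{is} the global integral, and the choice of $\mu_k$ makes the reduction to Proposition \ref{generalizeSong} an exact identity rather than an $\asymp$-estimate near each chart.
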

\begin{proof}
By \eqref{canonicalomeganEq},
    $$
        \int_X\left(\sum_{u\in \cF}\left|s_{k,u}\right|_{h_k}^2\right)^{-\frac{c}{k}}d\mu=(2\pi)^n\int_{\RR^n}\frac{\left(\sum_{u\in \cF}e^{\langle ku,x\rangle}\right)^{-\frac{c}{k}}}{\left(\sum_{u\in P\cap M/k}e^{\langle ku,x\rangle}\right)^{\frac{1-c}{k}}}dx.
    $$
    By Proposition \ref{generalizeSong} with
    $\cU=P\cap M/k$,
    $$
        c_k(\cF)=\sup\left\{c\in(0,1)\,:\,P\cap
        \Big(-\frac{c}{1-c}\co\cF\Big)\neq\emptyset\right\}
        >0,
    $$
    since $\co\cU=P$ (as the vertices of $P$ belong to $M/k$ for all $k\in\NN$).
\end{proof}

\bremark
Although not needed for our analysis, it might be of independent interest
to understand whether the supremum in \eqref{ckcFEq} (and in
the definition of $\alpha_{k,G(H)}$ \eqref{alpha_{k,G}}) is attained.
In other words, is (recall the notation of Corollary \ref{c0Cor})
\beq
\lb{fc0finiteEq}
\int_{S^{n-1}(1)}f_c(\nu)dS(\nu)
\eeq
finite for $c=c(\cF,\cU)$?
By \eqref{integrationpolarcoordEq}--\eqref{twosidedestimatecUcF}
it is equivalent to consider this question for the integral
\beq
\lb{supportpolarEq}
\int_{S^{n-1}(1)}\int_0^\infty e^{-krg_{c}(\nu)}r^{n-1}drdS(\nu)
=
\int_{\RR^n}e^{-kh_{(1-c)\co\cU+c\co\cF}(x)}dx.
\eeq
A classical formula in convex geometry says that whenever
$K\subset \RR^n$ is a compact convex set with the origin in its
interior, then $n!|K^\circ|=\int_{\RR^n}e^{-h_K(y)}dy$
(for a detailled proof see \cite[(4.2)]{BMR}). In fact,
it is possible to show that when $0\in \partial K$ the formula
still holds in the sense that both sides are equal to $\infty$
(this is related to, but stronger than, the classical fact
that $0\in\Int K$ if and only if $K^\circ$ is bounded
\cite[Corollary 14.5.1]{Rock}).
To summarize, both \eqref{fc0finiteEq} and \eqref{supportpolarEq}
are infinite when $c=c(\cF,\cU)$. We remark that one can also use 
this point of view to give an alternative, but equivalent, 
proof of Corollary \ref{c0Cor}.
\eremark

\subsection{Group orbits and singularities}
\lb{GroupOrbSubSec}
Consider the orbit-averaging map $\pi_H$ \eqref{definition of p},
taking a point to the average of its image under $H$. In particular, points on the same orbit have the same image. 
Let $M_\RR^H$ be the subspace of fixed points of $H$ in $M_\RR$. Then $\pi_H|_{M_\RR^H}=id_{M_\RR^H}$, and $M_\RR^H$ is the image of $\pi_H$. 
Note that unless $H$ is trivial, this is a proper subspace of $\MM_\RR$,
so $\pi_H$ is a projection from $M_\RR$ to the invariant subspace $M_\RR^H$,
justifying the notation.

Recall \eqref{PHEq},
\begin{equation*}
    P^H:=P\cap M_\RR^H,
\end{equation*}
Since $P$ is convex, 
$\pi_H(P)\subseteq P$. Hence $\pi_H(P)\subseteq P^H$. On the other hand, $P^H=\pi_H(P^H)\subseteq \pi_H(P)$. This implies
\begin{equation}\label{PHpiHPEq}
P^H=\pi_H(P).
\eeq
Then (recall \eqref{VerAEq})
$$
P^H=\pi_H(P)=\pi_H(\Conv(\Ver P))=\Conv(\pi_H(\Ver P)).
$$
In particular,
\begin{equation}\label{Vert P^H}
    \Ver P^H\subseteq \pi_H(\Ver P).
\end{equation}
Note that equality does not hold in general (for instance, consider
$P=[-1,1]^2$ and $H$ the reflection group about a diagonal).

\begin{lemma}\label{Orbit lct simplified}
    Fix $u_0\in M^H_\RR$, and let $\cF$ be a non-empty $H$-invariant convex subset of some fiber $\pi_H^{-1}(u_0)$ of $\pi_H$. Then $\cF\cap P\neq\emptyset$ if and only if $u_0\in P$.
\end{lemma}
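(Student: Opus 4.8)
The plan is to observe that $\cF$ automatically contains the point $u_0$ itself, which collapses the lemma to the tautology ``$u_0\in P$ if and only if $u_0\in\cF\cap P$''.

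First I would isolate the averaging principle that drives both implications: if $K\subseteq M_\RR$ is a convex set invariant under the $H$-action and $x\in K$, then the whole orbit $\{\eta.x:\eta\in H\}$ lies in $K$, hence so does its barycenter $\tfrac1{|H|}\sum_{\eta\in H}\eta.x=\pi_H(x)$ by convexity (recall \eqref{definition of p}). Since $\cF$ is nonempty, $H$-invariant, and convex by hypothesis, I apply this with $K=\cF$ and any $x\in\cF$ to get $\pi_H(x)\in\cF$; but $\cF\subseteq\pi_H^{-1}(u_0)$ forces $\pi_H(x)=u_0$, so $u_0\in\cF$.

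With that in hand the two directions are immediate. If $u_0\in P$, then $u_0\in\cF\cap P$, so $\cF\cap P\neq\emptyset$. Conversely, if $\cF\cap P\neq\emptyset$, I pick $x\in\cF\cap P$; because $H\subseteq\Aut P$ (see \eqref{AutPEq}) the polytope $P$ is $H$-invariant and convex, so the averaging principle yields $\pi_H(x)\in P$, and once more $\pi_H(x)=u_0$ since $x\in\cF$. Hence $u_0\in P$, completing the equivalence.

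I do not expect any real obstacle here; the argument is purely convexity plus $H$-equivariance. The only two points that need care are that $P$ is genuinely $H$-invariant (this is precisely the meaning of $H\subseteq\Aut P$), and that $\pi_H$ is constant on the fiber $\pi_H^{-1}(u_0)$, so that the barycenter produced by convexity is the prescribed point $u_0$ and not merely some unidentified point of $M_\RR^H$ (recall \eqref{PHEq}).
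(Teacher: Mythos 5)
Your proof is correct and follows essentially the same route as the paper's: both rely on the averaging observation that an $H$-invariant convex set containing a point contains the orbit barycenter, applied to $\cF$ (to get $u_0\in\cF$) and to $P$ (to get $\pi_H(x)\in P$). Isolating $u_0\in\cF$ up front, as you do, is a slightly cleaner organization than the paper's, which only derives that fact while proving the converse, but the mathematical content is identical.
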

\begin{proof}
    If $\cF\cap P\neq\emptyset$, 
    then $\pi_H(\cF\cap P)\neq\emptyset$.
    By \eqref{PHpiHPEq}, $\pi_H(\cF\cap P)\subset\pi_H(P)=P^H\subset P$, and since
    $\emptyset\not=\cF\subset \pi_H^{-1}(u_0)$, then 
    $\pi_H(\cF\cap P)\subset\pi_H(\cF)=\{u_0\}$.
    Thus, $\pi_H(\cF\cap P)\subset\{u_0\}\cap P$
    and for this to be nonempty it follows that $u_0\in P$.
    
    For the converse,
    since $\cF$ is $H$-invariant and convex, $\pi_H(\cF)\subset \cF$.
    Additionally, as before
    $\emptyset\not=\cF\subset \pi_H^{-1}(u_0)$
    implies
    $\pi_H(\cF)=\{u_0\}$. Thus $u_0\in\cF$.
    Therefore, if $u_0\in P$ then actually $u_0\in \cF\cap P$.
    \end{proof}

\subsection{Proof of Tian's conjecture}

We can now prove our main result.

\begin{proof}[Proof of Theorem \ref{alpha_k,Gformula}]
Fix $k\in\NN$ and let $O^{(k)}_1,\ldots,O^{(k)}_N$ be the orbits of the action of $H$ on $P\cap M/k$. Recall that $\pi_H$ maps an orbit to a singleton,
    $$
    \big\{o^{(k)}_i\big\}:=\pi_H\big(O^{(k)}_i\big).
    $$
    Note that 
    $O^{(k)}_i\subset\pi_H^{-1}\big(o^{(k)}_i\big)$
    and hence (as the action of $H$ and hence also $\pi_H$ is linear) also
    $\co O^{(k)}_i\subset\pi_H^{-1}\big(o^{(k)}_i\big)$.
    Moreover, $\co O^{(k)}_i$ is convex and $H$-invariant.
    By Corollary \ref{alpha orbit exp cor}, Proposition \ref{Orbit lct}, and Lemma \ref{Orbit lct simplified},
    \begin{align*}
        \alpha_{k,G(H)}
        &=\min_{1\leq i\leq N}c\big(O^{(k)}_i\big)\\
        &=\min_{1\leq i\leq N}\sup\left\{c\in(0,1)\,:\,\Big(-\frac{c}{1-c}\co O^{(k)}_i\Big)\cap P\not=\emptyset\right\}
        \\
        &=\min_{1\leq i\leq N}\sup\left\{c\in(0,1)\,:\,-\frac{c}{1-c}o^{(k)}_i\in P\right\}\\
        &=\sup\left\{c\in(0,1)\,:\,-\frac{c}{1-c}\big\{o^{(k)}_1,\ldots,o^{(k)}_N\big\}\subset P\right\}.
    \end{align*}
    Since
    $$
        \big\{o^{(k)}_1,\ldots,o^{(k)}_N\big\}=\pi_H\left(\bigcup_{i=1}^NO^{(k)}_i\right)=\pi_H
        \left({k}^{-1}M\cap P\right),
    $$
    then
    \begin{align}
        \alpha_{k,G(H)}&=\sup\left\{c\in(0,1)\,:\,-\frac{c}{1-c}\pi_H({k}^{-1}M\cap P)\subset P\right\}\nonumber\\
        &=\sup\left\{c\in(0,1)\,:\,-\frac{c}{1-c}\Conv(\pi_H({k}^{-1}M\cap P))\subset P\right\}\nonumber\\
        &=\sup\left\{c\in(0,1)\,:\,-\frac{c}{1-c}\pi_H(\Conv({k}^{-1}M\cap P))\subset P\right\}\nonumber\\
        &=\sup\left\{c\in(0,1)\,:\,-\frac{c}{1-c}\pi_H(P)\subset P\right\}\nonumber\\
        &=\sup\left\{c\in(0,1)\,:\,-\frac{c}{1-c}P^H\subset P\right\}\nonumber\\
        &=\sup\left\{c\in(0,1)\,:\,-\frac{c}{1-c}\Ver P^H\subset P\right\}\nonumber\\
        &=\min_{u\in\Ver P^H}\sup\left\{c\in(0,1)\,:\,-\frac{c}{1-c}u\in P\right\}.\label{alpha_{k,G} using Ver(P^H)}
    \end{align}
    Also, by \eqref{Vert P^H},
    \begin{align*}
        \alpha_{k,G(H)}&=\sup\left\{c\in(0,1)\,:\,-\frac{c}{1-c}\Ver P^H\subset P\right\}\\
        &\geq\sup\left\{c\in(0,1)\,:\,-\frac{c}{1-c}\pi_H(\Ver P)\subset P\right\}\\
        &\geq\sup\left\{c\in(0,1)\,:\,-\frac{c}{1-c}\pi_H(P)\subset P\right\}\\
        &=\alpha_{k,G(H)}.
    \end{align*}
    Therefore,
    \begin{align}
        \alpha_{k,G(H)}&=\sup\left\{c\in(0,1)\,:\,-\frac{c}{1-c}\pi_H(\Ver P)\subset P\right\}\nonumber\\
        &=\min_{u\in \pi_H(\Ver P)}\sup\left\{c\in(0,1)\,:\,-\frac{c}{1-c}u\in P\right\}.\label{alpha_{k,G} using p(Ver(P))}
    \end{align}
    Recall \eqref{PDef2Eq},
    $$
        P=\bigcap_{j=1}^d\{x\in M_\RR~|~\langle x,-v_j\rangle\leq1\}.
    $$
    For any point $u$ and $c\in(0,1)$,
    $$
        -\frac{c}{1-c}u\in P\q
    \hbox{
    if and only if }\q
            c\leq\min_j\frac{1}{\langle u,v_j\rangle+1}\q
            \hbox{for all $j\in\{1,\ldots,d\}$}.
    $$
    In sum, combining \eqref{alpha_{k,G} using Ver(P^H)} and \eqref{alpha_{k,G} using p(Ver(P))} implies \eqref{MainTheoremEq}.
    In particular, $\alpha_{k,G(H)}$ is independent of $k\in\NN$ and, by Corollary \ref{alpha_G corollary}, is equal to $\alpha_{G(H)}$.
This completes the proof of Theorem \ref{alpha_k,Gformula}.
\end{proof}

\section{Grassmannian Tian invariants}
\lb{alphakmSec}

The following definition is due to Tian \cite[(6.1)]{Tian91}. 

\begin{definition}
\lb{alphakmGDef}
    Let $X$ be a Fano manifold and $G\subseteq\Aut X$ a compact subgroup. For $k,m\in\NN$, define (recall \eqref{lctSigmaEq})
    $$
        \alpha_{k,m,G}:=k\inf_{\substack{|V|\subset|-kK_X|\\\dim V=m\\V^G=V}}\lct|V|,
    $$
    with the convention $\inf\emptyset=\infty$.
\end{definition}

Equivalently, these invariants are the minimum of the function 
$V\mapsto k\lct|V|$ over the Grassmannian restricted to the 
subset of $G$-invariant $m$-dimensional 
subspaces of $H^0(X,-kK_X)$. This subset can be, in some situations, even
a finite set (see Remark \ref{NoMonotonicityInmRemark}).
By Remark \ref{GenAlphakmRem} below, these invariants generalize the invariants
$\alpha_{k,G}$. Note also that Definition \ref{alphakmGDef} is an {\it algebraic}
one. Also, $\alpha_{k,E_P(k),G}=\infty$ (recall \eqref{EPkEq}) so it is {\it not} equal to
$\alpha_{k,G}$ from Definition \ref{alphakGhmuDef}.

In the toric setting it is natural to consider Conjecture \ref{TianGenConj}
for the invariants $\alpha_{k,m,(S^1)^n}$. For these invariants
we show that Conjecture \ref{TianGenConj} does not quite hold,
and we give a precise breakdown of when it does in terms of a
natural convex geometric criterion.
In contrast with previous sections, we do not work with the additional
symmetry corresponding to the groups $G(H)$  \eqref{GHEq}. The reason for
that is that even in simple situations $\alpha_{k,m,G(H)}$ will not be
well-behaved: see Example \ref{alphakmGHExample}.

\begin{remark}
\lb{GenAlphakmRem}
    Putting $H=\{\id\}$ in Proposition \ref{analytic alpha_k,G}, 
    notice that $\alpha_{k,(S^1)^n}=\alpha_{k,1,(S^1)^n}$. In this sense, $\alpha_{k,m,(S^1)^n}$ is a generalization of the $\alpha_{k,(S^1)^n}$-invariants.
\end{remark}

First, we show, as a rather immediate consequence of our work, an explicit formula for
the invariants of Definition \ref{alphakmGDef}.

\bcor
\label{alpha_k,m,G lemma}
    Let $X$ be toric Fano with associated polytope $P$ (\ref{PDef2Eq}). For $k,m\in\NN$
    (recall (\ref{ckcFEq})),
    $$
        \alpha_{k,m,(S^1)^n}=\min_{\substack{\cF\subseteq P\cap M/k\\\left|\cF\right|=m}}c_k\left(\cF\right).
    $$
\ecor
\begin{proof}
    Fix $k,m\in\NN$. Let $V\subseteq H^0(X,-kK_X)=\Span\{s_u\}_{u\in P\cap M/k}$ be a 
    subspace invariant under $(S^1)^n$. By Lemma \ref{onedimpiecesLema}, 
    there exists a unique  $\cF\subseteq P\cap M/k$ such that
    $$
        V=\Span\left\{s_u\right\}_{u\in\cF}.
    $$
    Note that $|\cF|=\dim V$. By \eqref{lctSigmaEq} and \eqref{ckcFEq},
    $$
        \alpha_{k,m,(S^1)^n}=k\min_{\substack{\cF\subseteq P\cap M/k\\\left|\cF\right|=m}}\lct\left|\Span\left\{s_u\right\}_{u\in\cF}\right|=\min_{\substack{\cF\subseteq P\cap M/k\\\left|\cF\right|=m}}c_k\left(\cF\right).
    $$
\end{proof}

\begin{lemma}\label{norm of set}
    Let $K\subset\RR^n$ be a compact convex set with $0\in\Int K$. For any set $S\subset\RR^n$,
    $$
        \inf_{S}\left\|\,\cdot\,\right\|_K=\inf\left\{\lambda\geq0\,:\,S\cap\lambda K\neq\emptyset\right\}.
    $$
\end{lemma}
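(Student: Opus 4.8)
The plan is to prove the two inequalities separately, each obtained by directly unwinding the definition \eqref{normdef} of the near-norm. For the inequality $\inf_S\|\,\cdot\,\|_K\le\inf\{\lambda\ge0:S\cap\lambda K\neq\emptyset\}$, I would take any $\lambda\ge0$ with $S\cap\lambda K\neq\emptyset$, pick a point $x$ in that intersection, and observe that $x\in\lambda K$ forces $\|x\|_K\le\lambda$ straight from \eqref{normdef}; hence $\inf_S\|\,\cdot\,\|_K\le\|x\|_K\le\lambda$, and taking the infimum over all admissible $\lambda$ gives the claim.

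For the reverse inequality, I would fix an arbitrary $x\in S$ and invoke \eqref{infachievedEq}, which asserts $x\in\|x\|_K K$ (this is where the closedness of $K$ is used, as opposed to merely having $x/t_i\in K$ along a minimizing sequence). Then $x\in S\cap\|x\|_K K$, so $\|x\|_K$ is an admissible value of $\lambda$, giving $\inf\{\lambda\ge0:S\cap\lambda K\neq\emptyset\}\le\|x\|_K$; taking the infimum over $x\in S$ yields $\inf\{\lambda\ge0:S\cap\lambda K\neq\emptyset\}\le\inf_S\|\,\cdot\,\|_K$. Combining the two inequalities proves the lemma.

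The only points requiring a remark rather than a computation are trivialities: since $0\in\Int K$, the set $K$ absorbs every point, so $\|x\|_K<\infty$ for all $x$ and the left-hand side is a genuine infimum over a nonempty set of reals when $S\neq\emptyset$; and in the degenerate case $S=\emptyset$ both sides equal $+\infty$ under the convention $\inf\emptyset=+\infty$, so the identity holds vacuously. I do not expect a genuine obstacle here — the statement is essentially a dictionary translation — and the one place to stay attentive is using \eqref{infachievedEq} (hence the closedness of $K$) rather than an open approximation argument.
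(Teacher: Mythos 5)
Your proof is correct and follows essentially the same two-inequality argument as the paper, including the use of \eqref{infachievedEq} (closedness of $K$) for the reverse direction. The extra remarks on finiteness and the $S=\emptyset$ convention are harmless and not needed in the paper's version.
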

\begin{proof}
    If $S\cap\lambda K\neq\emptyset$ for some $\lambda\geq0$, pick $x\in S\cap\lambda K$. Since $x\in\lambda K$, by \eqref{normdef}, $\|x\|_K\leq\lambda$. Therefore $\inf_{x\in S}\|x\|_K\leq\inf\{\lambda\geq0\,:\,S\cap\lambda K\neq\emptyset\}$.

    On the other hand, for any $x\in S$, $S\cap\|x\|_KK\supseteq\{x\}\neq\emptyset$
    by \eqref{infachievedEq}. So $\inf_{x\in S}\|x\|_K\geq\inf\{\lambda\geq0\,:\,S\cap\lambda K\neq\emptyset\}$.
\end{proof}

Proposition \ref{Orbit lct} can be reformulated in terms of near-norms:
\bcor\label{c_k norm}
    For $k\in\NN$ and a non-empty set $\cF\subseteq P\cap M/k$ (recall (\ref{ckcFEq})),
    $$
        c_k\left(\cF\right)=\frac{1}{1+\min\limits_{\Conv\cF}\left\|\,\cdot\,\right\|_{-P}}.
    $$
\ecor
\begin{proof}
    By Proposition \ref{Orbit lct} and Lemma \ref{norm of set}, 
    \begin{align*}
        c_k\left(\cF\right)&=\sup\left\{c\in(0,1)\,:\,P\cap\left(-\frac{c}{1-c}\co\cF\right)\neq\emptyset\right\}
        \\
        &=\sup\left\{c\in(0,1)\,:\,\Conv\cF\cap\frac{1-c}{c}\left(-P\right)\neq\emptyset\right\}
        \\
        &=\sup\left\{\frac1{1+a}\,:\,a\in(0,\infty),\;\Conv\cF\cap \left(-aP\right)\neq\emptyset\right\}
        \\
        &=\Big[1+\inf\Big\{a\in(0,\infty)\,:\,\Conv\cF\cap \left(-aP\right)\neq\emptyset\Big\}
        \Big]^{-1}
        \\
        &=\big[1+\inf_{\Conv\cF}\left\|\,\cdot\,\right\|_{-P}
        \big]^{-1}
        =\big[1+\min_{\Conv\cF}\left\|\,\cdot\,\right\|_{-P}
        \big]^{-1},        
    \end{align*}
    using compactness of $\Conv\cF$.
\end{proof}

Combining Corollaries \ref{alpha_k,m,G lemma} and  \ref{c_k norm} yields an explicit formula for $\alpha_{k,m,(S^1)^n}$.
\begin{corollary}
\lb{alphakmtorusCor}
    Let $X$ be toric Fano with associated polytope $P$ (\ref{PDef2Eq}). For $k,m\in\NN$,
    $$
        \alpha_{k,m,(S^1)^n}=
        \bigg[{1+\max\limits_{\substack{\cF\subseteq P\cap M/k\\\left|\cF\right|=m}}\;\min\limits_{\Conv\cF}\left\|\,\cdot\,\right\|_{-P}}
        \bigg]^{-1}
        .
    $$
\end{corollary}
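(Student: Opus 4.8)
The plan is to derive the formula by composing Corollaries \ref{alpha_k,m,G lemma} and \ref{c_k norm} together with the elementary observation that $t\mapsto(1+t)^{-1}$ is decreasing; no new geometric input is required.

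First I would invoke Corollary \ref{alpha_k,m,G lemma} to write
$$
\alpha_{k,m,(S^1)^n}=\min_{\substack{\cF\subseteq P\cap M/k\\|\cF|=m}}c_k(\cF),
$$
reducing everything to the numbers $c_k(\cF)$. Then I would substitute the evaluation $c_k(\cF)=\big[1+\min_{\Conv\cF}\|\,\cdot\,\|_{-P}\big]^{-1}$ from Corollary \ref{c_k norm}, obtaining
$$
\alpha_{k,m,(S^1)^n}=\min_{\substack{\cF\subseteq P\cap M/k\\|\cF|=m}}\Big[1+\min_{\Conv\cF}\|\,\cdot\,\|_{-P}\Big]^{-1}.
$$
Since $0\in\Int(-P)$, the near-norm $\|\,\cdot\,\|_{-P}$ is nonnegative by \eqref{normdef}, so each quantity $1+\min_{\Conv\cF}\|\,\cdot\,\|_{-P}$ lies in $[1,\infty)$; as $t\mapsto(1+t)^{-1}$ is decreasing on $[0,\infty)$, the minimum over $\cF$ of $(1+\min_{\Conv\cF}\|\,\cdot\,\|_{-P})^{-1}$ equals the reciprocal of $1$ plus the maximum over $\cF$ of $\min_{\Conv\cF}\|\,\cdot\,\|_{-P}$, which is the asserted identity. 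All extrema are attained: by Lemma \ref{PIntegralLem} the set $P\cap M/k$ is finite and nonempty (it contains $\Ver P$), hence so is the family of admissible $\cF$ with $|\cF|=m$; and each $\Conv\cF$ is compact, so $\|\,\cdot\,\|_{-P}$ attains its minimum on it, exactly as already used in the proof of Corollary \ref{c_k norm}.

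I do not expect a genuine obstacle here: the statement is a formal consequence of the two preceding corollaries. The only point worth a remark is the degenerate regime $m>E_P(k)$, in which no $\cF\subseteq P\cap M/k$ has $|\cF|=m$; with the convention $\inf\emptyset=\infty$ of Definition \ref{alphakmGDef} the left-hand side is $\infty$, and reading the maximum over the empty family as $+\infty$ makes the right-hand side $\infty$ as well, so the identity persists. All the substantive work --- the reduction of $(S^1)^n$-invariant linear systems to monomial-spanned ones (Lemma \ref{onedimpiecesLema}) and the convex-geometric computation of $c_k(\cF)$ (Proposition \ref{Orbit lct}) --- is already contained in the earlier results.
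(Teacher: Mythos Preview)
Your argument is correct and is exactly the approach the paper takes: the corollary is stated as an immediate consequence of combining Corollaries \ref{alpha_k,m,G lemma} and \ref{c_k norm}, with the passage from a minimum of $(1+t)^{-1}$ to the reciprocal of $1$ plus a maximum being the only (trivial) step. Your additional remarks on attainment and the degenerate case $m>E_P(k)$ are fine elaborations but not needed for the paper's purposes.
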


\begin{remark}
    For $m=1$, Corollary \ref{alphakmtorusCor} reads
    $$
        \alpha_{k,(S^1)^n}=\frac{1}{1+\max\limits_{u\in P\cap M/k}\left\|u\right\|_{-P}}.
    $$
    Being a near-norm, the function $\|\cdot\|_{-P}$ satisfies the triangle inequality and is positively 1-homogeneous, hence it is convex.
    By Lemma \ref{PIntegralLem},
    the vertex set of $P$ is in the integral
    lattice $M$, hence also contained in $M/k$ for all $k\in\NN$. 
    This combined with Lemma \ref{ConvMaxAttaindVertexLem} gives,
    \beq
    \lb{anotheralphakconstEq}
        \alpha_{k,(S^1)^n}=\frac{1}{1+\max\limits_{\Ver{P}}\left\|\,\cdot\,\right\|_{-P}}=\frac{1}{1+\max\limits_{ P}\left\|\,\cdot\,\right\|_{-P}}.
    \eeq
    This is, of course, a special case of Theorem \ref{alpha_k,Gformula} (proved in \S\ref{OrbitsEstimateSec}), derived here in slightly different notation (using
    the near-norm instead of the support function).
    Thus, 
    \beq
    \lb{alphakBJEq}
    \alpha_{k,(S^1)^n}=\alpha_{(S^1)^n}=\alpha,
    \eeq 
    where the first
    equality follows from \eqref{anotheralphakconstEq} and Demailly's result \cite[(A.1)]{CS08},
    and the last equality
    follows by comparing Theorem \ref{alpha_k,Gformula} with
    Blum--Jonsson's formula \cite[(7.2)]{BJ20}.
\end{remark}

\bprop\label{alpha_k,m,G monotone in m}
    Let $X$ be toric Fano with associated polytope $P$ (\ref{PDef2Eq}). 
    For $k\in\NN$ and $m'> m$,
    \beq
    \lb{alphakmmprimeEq}
        \alpha_{k,m',\left(S^1\right)^n}\geq\alpha_{k,m,\left(S^1\right)^n}.
    \eeq
    In particular,
    $$
        \alpha_{k,m,\left(S^1\right)^n}\geq\alpha_{k,1,\left(S^1\right)^n}=\alpha_{k,\left(S^1\right)^n}=\alpha.
    $$
    Equality holds if and only if there is $\cF\subseteq P\cap M/k$ with $|\cF|=m$ 
    satisfying 
    \beq
    \lb{constantoncoFEq}
        \left\|\,\cdot\,\right\|_{-P}\Big|_{\Conv\cF}
        =
        \max_{ P}\left\|\,\cdot\,\right\|_{-P}.
    \eeq
\eprop

\bremark
\lb{NoMonotonicityInmRemark}
For a  general Fano $X$, it does not follow from Definition \ref{alphakmGDef} that 
$\alpha_{k,m',G}\geq\alpha_{k,m,G}$, i.e., there may be no monotonicity in $m$. For instance, there might be
no $m$-dimensinal $G$-invariant subspaces. Thanks to Lemma \ref{onedimpiecesLema}
the situation for $G=(S^1)^n$ is particularly simple: there are finitely-many
$m$-dimensional $(S^1)^n$-invariant subspaces (in fact, the Grassmannian
of $m$-dimensional subspaces has exactly ${E_P(k)\choose m}$
fixed points by the $(S^1)^n$-action), and moreover every 
$(S^1)^n$-invariant subspace consists of 1-dimensional blocks. 
\eremark

\begin{proof}
    Pick $\cF'\subseteq P\cap M/k$ that computes $\alpha_{k,m',(S^1)^n}$
    (such a subset exists since $P\cap M/k$ is a finite set). That is, 
    $|\cF'|=m'$ and 
    $
        \alpha_{k,m',\left(S^1\right)^n}=\big[{1+\min\limits_{\Conv\cF'}\left\|\,\cdot\,\right\|_{-P}}\big]^{-1}
    $ (recall Corollary \ref{alphakmtorusCor}).
    For $\cF$ be a subset of $\cF'$ with $|
    \cF|=m$, Corollary \ref{alphakmtorusCor} implies
    $$
        \alpha_{k,m,\left(S^1\right)^n}=\frac{1}{1+\max\limits_{\substack{\cA\subseteq P\cap M/k\\\left|\cA\right|=m}}\;\min\limits_{\Conv\cA}\left\|\,\cdot\,\right\|_{-P}}\leq\frac{1}{1+\min\limits_{\Conv\cF}\left\|\,\cdot\,\right\|_{-P}}\leq\frac{1}{1+\min\limits_{\Conv\cF'}\left\|\,\cdot\,\right\|_{-P}}=\alpha_{k,m',\left(S^1\right)^n},
    $$
proving \eqref{alphakmmprimeEq}.

Next, suppose $\alpha_{k,m,(S^1)^n}=\alpha$, i.e.
(recall \eqref{anotheralphakconstEq}--\eqref{alphakBJEq}),
    \begin{equation}\label{equality condition}
        \max_{\substack{\cF\subseteq P\cap M/k\\\left|\cF\right|=m}}
        \;\min_{\Conv\cF}\left\|\,\cdot\,\right\|_{-P}=\max_{ P}\left\|\,\cdot\,\right\|_{-P}.
    \end{equation}
    For any $\cF\subseteq P$, convexity of $P$ (recall (\ref{PDef2Eq})) implies $\Conv\cF\subseteq P$. Hence,
    $$
        \min_{\Conv\cF}\left\|\,\cdot\,\right\|_{-P}\leq\max_{ P}\left\|\,\cdot\,\right\|_{-P}.
    $$
    The equality \eqref{equality condition} holds if and only if there is $\cF\subseteq P\cap M/k$ such that $|\cF|=m$ and
    $$
        \min_{\Conv\cF}\left\|\,\cdot\,\right\|_{-P}=\max_{ P}\left\|\,\cdot\,\right\|_{-P},
    $$
    which forces $\|\cdot\|_{-P}$ to be the constant $\max_{ P}\|\,\cdot\,\|_{-P}$ 
    on $\Conv\cF$.
\end{proof}

\subsection{Proof of Theorem \ref{SecondMainThm} and the intuition behind it}
\lb{ProofThm1.6SubSec}

Theorem \ref{alpha_k,Gformula} resolved Tian's classical stabilization Problem \ref{TianProb}
in the affirmative. This corresponds to the case $m=1$
of Conjecture \ref{TianGenConj}.
Next we show that Conjecture \ref{TianGenConj} is only partially true 
for $m\ge2$.

\begin{proof}[Proof of Theorem \ref{SecondMainThm}]
    Let $\cF\subset P\cap M/k$ with $|\cF|=m$.
    By convexity of $P$ and as $m\ge2$,  $\Conv\cF$ contains an interval in $P$.
    In particular, $\cF$    contains a point of $P\setminus\Ver P$.
Thus, if \eqref{starPEq} holds then \eqref{constantoncoFEq} does not
hold (for any such $\cF$), and
Proposition \ref{alpha_k,m,G monotone in m} implies \eqref{TianGenConjFailsEq}.

Next, suppose \eqref{starPEq} fails (and now we relax to $m\ge1$).
    Let $\hat x\in P\setminus\Ver P$ achieve the maximum of $\|\cdot\|_{-P}$ on $P$.
    Express $\hat{x}$ as a convex combination of a {\it subset} of vertices of $P$,
    $\{p_1,\ldots,p_\ell\}\subset\Ver P$, namely, 
    \beq
    \lb{hatxEq}
        \hat{x}=\sum_{i=1}^{\ell}\hat{\lambda}_ip_i, \q \sum_{i=1}^{\ell}\hat{\lambda}_i=1,
        \q \hat{\lambda}\in(0,1)^\ell.
    \eeq
Note that necessarily 
\beq
\lb{ellbigEq}
\ell>1.
\eeq
We claim that
    $$
        \left\|x\right\|_{-P}=\max_{x\in P}\left\|\,\cdot\,\right\|_{-P}, \q
        \hbox{for any $x\in\Conv\{p_i\}_{i=1}^\ell$}.
    $$
    Indeed, letting
    $$
P':=\Conv\{p_i\}_{i=1}^\ell\subset P,
    $$
    the claim follows from $\hat x\in\Int P'\subset P$ and
    Lemma \ref{ConvMaxAttaindVertexLem} (iii).
    
Now, the lattice polytope ${P}':=\Conv\{p_i\}_{i=1}^\ell$ has positive dimension
by \eqref{ellbigEq}. Equivalently, the Ehrhart polynomial of ${P}'$ has degree at least 1. Hence, by Proposition \ref{Ehrhart polynomial},
$|{P}'\cap M/k|\geq m$ for all sufficiently large $k$. 
Pick $\cF\subseteq {P}'\cap M/k\subseteq P\cap M/k$ such that $|\cF|=m$. Then $\Conv\cF\subseteq{P}'$. By Proposition \ref{alpha_k,m,G monotone in m}, \eqref{TianGenConjHoldsEq} follows. 
The case $m=1$ was
already treated in Theorem \ref{alpha_k,Gformula} but also from this proof
we see that any $k\in\NN$ works for $m=1$.
\end{proof}

The proof of Theorem \ref{SecondMainThm} and condition \eqref{starPEq} 
have a pleasing geometric interpretation.
Geometrically, by Proposition \ref{alpha_k,m,G monotone in m}, 
$\alpha_{k,m,(S^1)^n}=\alpha$ if and only if there is a subset $\cF\subseteq P\cap M/k$ such that $|\cF|=m$ and $\Conv\cF$ lies entirely in the level set 
\beq
\lb{maxlevelsetEq}
\Big\{\|\,\cdot\,\|_{-P}=\max_P\|\cdot\|_{-P}\Big\}\subseteq\partial P
\eeq 
Recall that any convex subset of the boundary of a polytope must lie in a single face. Hence $\cF$ must be a subset of a face $F$ of $P$. We may assume $F$ is the minimal face that contains $\cF$, i.e., $\cF$ intersects the relative interior of $F$. In particular,  $\|\cdot\|_{-P}|_F$ 
attains its maximum in the interior of $F$ and so $F$ has to lie entirely in the level set \eqref{maxlevelsetEq} by Lemma \ref{ConvMaxAttaindVertexLem}. Therefore $\alpha_{k,m,(S^1)^n}=\alpha$ if and only if there is a subset $\cF\subseteq P\cap M/k$ such that $|\cF|=m$ and $\cF$ is a subset of a face that lies in the level set \eqref{maxlevelsetEq}, or equivalently, there is a face $F$ that lies in the level set \eqref{maxlevelsetEq}, and $|F\cap M/k|\geq m$.

Notice that the level set 
$\{\|\,\cdot\,\|_{-P}=\lambda\}$
is the dilation 
$\lambda\partial(-P)$, and \eqref{maxlevelsetEq} 
is the largest dilation that intersects $P$ (by Lemma \ref{norm of set});
see Figure \ref{*P example}. Combining all the above then, \eqref{starPEq} states that 
$$
\hbox{$P$ intersects $\max_P\|\cdot\|_{-P}\partial(-P)$ only at vertices.}
$$
If \eqref{starPEq} holds, then any such face $F$ consists of a single lattice point. In particular, $|F\cap M/k|=1$ for any $k$, and $\alpha_{k,m,(S^1)^n}$ stabilizes if and only if $m=1$. On the other hand, if \eqref{starPEq} fails, then there is a face $F$ of positive dimension that lies in the level set 
\eqref{maxlevelsetEq}.
Since $|F\cap M/k|$ increases to infinity, for any $m$, we can find $k_0$ such that $|F\cap M/k|\geq m$ for $k\geq k_0$, i.e., $\alpha_{k,m,(S^1)^n}$ stabilizes
in $k$.

\subsection{A Demailly result for Grassmannian invariants}

The next result was obtained independently by Li--Zhu \cite[Proposition 4.1]{LiZhu}.

\bprop
\lb{convergencalphakmProp}
    Let $X$ be toric Fano with associated polytope $P$ (\ref{PDef2Eq}). For $m\in\NN$,
    $$
        \lim_{k\to\infty}\alpha_{k,m,\left(S^1\right)^n}=\alpha.
    $$
\eprop
\begin{proof}
    By continuity, given $\eps>0$ there exists an open set $U_\eps\subset P$ such that
    \begin{equation}\label{epsilon argument}
        \left\|\,\cdot\,\right\|_{-P}\Big|_{U_\eps}>\max_{P}\left\|\,\cdot\,\right\|_{-P}-\eps.
    \end{equation}
    Fix $\delta>0$ sufficiently small so that there is a closed cube $C_\delta\subset U_\eps$ with edge length $\delta$. Now, for $k\geq K$, $|C_\delta\cap M/k|\geq\lfloor K\delta\rfloor^n$. Given $m\in\NN$,  choose  $K$ so $\lfloor K\delta\rfloor^n\geq m$.
    Then for each $k\ge K$ there exists $\cF\subset C_\delta\cap M/k\subset P\cap M/k$ such that $|\cF|=m$. Since $\cF\subset C_\delta$ and $C_\delta$ is convex, $\Conv\cF\subset C_\delta\subset U_\eps$. By \eqref{epsilon argument},
    $$
        \min_{\co\cF}\left\|\,\cdot\,\right\|_{-P}\geq\min_{C_\delta}\left\|\,\cdot\,\right\|_{-P}>\max_{P}\|\,\cdot\,\|_{-P}-\eps.
    $$
    So by Corollary \ref{alphakmtorusCor} and 
    \eqref{anotheralphakconstEq}--\eqref{alphakBJEq},
    $$
        \alpha_{k,m,\left(S^1\right)^n}\leq\frac{1}{1+\min\limits_{\Conv\cF}\left\|\,\cdot\,\right\|_{-P}}<\frac{1}{1+\max\limits_{ P}\|\,\cdot\,\|_{-P}-\eps}=\frac{\alpha}{1-\eps\alpha},
    $$
    for any $k$ such that $\lfloor k\delta\rfloor\geq {\root n \of m}$. Hence
    $
        \limsup_{k\to\infty}\alpha_{k,m,\left(S^1\right)^n}\leq\frac{\alpha}{1-\eps\alpha}.
    $
    On the other hand, by Proposition \ref{alpha_k,m,G monotone in m},
    $
        \liminf_{k\to\infty}\alpha_{k,m,\left(S^1\right)^n}\geq\alpha.
    $
    Since $\eps>0$ is arbitrary the proof is complete.
\end{proof}

\section{Examples}
\lb{ExamplesSec}

This section illustrates Theorems \ref{alpha_k,Gformula} and \ref{SecondMainThm}
in the case of toric del Pezzo surfaces. 

\begin{example}
\lb{P2ExampleFirstExam}
    Let $v_1=(1,0),v_2=(0,1),v_3=(-1,-1)$. Then $\PP^2=X(\Delta)$ for
    $$
\Delta=\{\{0\}, \RR_{+}v_1,\RR_{+}v_2,\RR_{+}v_3,\RR_{+}v_1+\RR_{+}v_2,
\RR_{+}v_1+\RR_{+}v_3,\RR_{+}v_2+\RR_{+}v_3\}
    $$
    and
$P=\{-v_1,-v_2,-v_3\}^\circ=\{y\in M_\RR\,:\, \langle y,-v_i\rangle\le1, i\in\{1,2,3\}\}\subset M_\RR$ is depicted in Figure \ref{alpha_G example}.
    Then $\Aut P\cong D_{6}=S_3$, the dihedral/cyclic group of order 6 consisting of the permutations of the 3 vertices, or the 3 homogeneous coordinates $\PP^2$, as in Figure \ref{P2figure}. 
    For $H=\Aut P$, $P^H$ is the origin, so by Theorem \ref{alpha_k,Gformula}, $\alpha_{k,G(H)}=\alpha_{G(H)}={1}$. 
    When $H$ is the trivial group then $P^H=P$ and 
    Theorem \ref{alpha_k,Gformula} gives 
    $\alpha_{k,G(H)}=\alpha_{G(H)}=\frac{1}{3}$, 
    with minimum achieved at any vertex of $P$.
    When $H$ is the subgroup $\ZZ_2$ given by reflection about, say, $y=x$
    (i.e., generated by the matrix $\begin{pmatrix}0&1\cr1&0\end{pmatrix}$ associated to switching two of the homogeneous coordinates of $\PP^2$),
    $P^H$ is the line segment 
    $\co\{(-1,-1),(\frac{1}{2},\frac{1}{2})\}$
    and
    $\alpha_{k,G(H)}=\alpha_{G(H)}=\frac{1}{3}$, 
    with minimum achieved at $(-1,-1)$.
    When $H$ is the subgroup of order $3$,
    $P^H=\{0\}$, and
    $\alpha_{k,G(H)}=\alpha_{G(H)}=1$, 
    \end{example}

\begin{figure}
        \centering
        \begin{subfigure}{.3\textwidth}
            \begin{tikzpicture}
                \draw[->,gray](-1.5,0)--(2,0)node[below]{$x$};
                \draw[->,gray](0,-1.5)--(0,2)node[left]{$y$};
                \draw(0,0)node[below right]{$0$};
                \draw(2,-1)--(-1,2)--(-1,-1)--cycle;
            \end{tikzpicture}
        \end{subfigure}
        \begin{subfigure}{.3\textwidth}
            \centering
            \begin{tikzpicture}
                \draw[->,gray](-1.5,0)--(2,0)node[below]{$x$};
                \draw[->,gray](0,-1.5)--(0,2)node[left]{$y$};
                \draw(0,0)node[below right]{$0$};
                \draw(2,-1)--(-1,2)--(-1,0)--(0,-1)--cycle;
                \draw[dashed](-.5,-.5)node[below left]{$(-\frac{1}{2},-\frac{1}{2})$}--(.5,.5)node[above right]{$(\frac{1}{2},\frac{1}{2})$};
            \end{tikzpicture}
        \end{subfigure}
        \begin{subfigure}{.3\textwidth}
            \centering
                \begin{tikzpicture}
                \draw[->,gray](-1.5,0)--(1.5,0)node[below]{$x$};
                \draw[->,gray](0,-1.5)--(0,1.5)node[left]{$y$};
                \draw(0,0)node[below right]{$0$};
                \draw(1,-1)--(1,0)--(0,1)--(-1,1)--(-1,-1)--cycle;
                \draw[dashed](-1,-1)node[below left]{$(-1,-1)$}--(.5,.5)node[above right]{$(\frac{1}{2},\frac{1}{2})$};
            \end{tikzpicture}
        \end{subfigure}
        \begin{subfigure}{.3\textwidth}
            \centering
                \begin{tikzpicture}
                \draw[->,gray](-1.5,0)--(1.5,0)node[below]{$x$};
                \draw[->,gray](0,-1.5)--(0,1.5)node[left]{$y$};
                \draw(0,0)node[below right]{$0$};
                \draw(1,-1)--(1,0)--(0,1)--(-1,1)--(-1,0)--(0,-1)--cycle;
            \end{tikzpicture}
        \end{subfigure}
        \begin{subfigure}{.3\textwidth}
            \centering
                \begin{tikzpicture}
                \draw[->,gray](-1.5,0)--(1.5,0)node[below]{$x$};
                \draw[->,gray](0,-1.5)--(0,1.5)node[left]{$y$};
                \draw(0,0)node[below right]{$0$};
                \draw(1,-1)--(1,1)--(-1,1)--(-1,-1)--cycle;
            \end{tikzpicture}
        \end{subfigure}
    \caption{The polytope $P$ for del Pezzo surfaces, namely $\PP^2$ blown up at no more than 3 generically positioned points, and $\PP^1\times\PP^1$. For the two K-unstable examples, the automorphism group $H=\Aut(P)$ is generated by the reflection about $y=x$, and $P^H$ is the intersection of $P$ with this reflection axis. For the other three examples, the automorphism group $H=\Aut(P)$ is the dihedral group associated to the polygon $P$, and $P^H=\{0\}$.}\label{alpha_G example}
\end{figure}
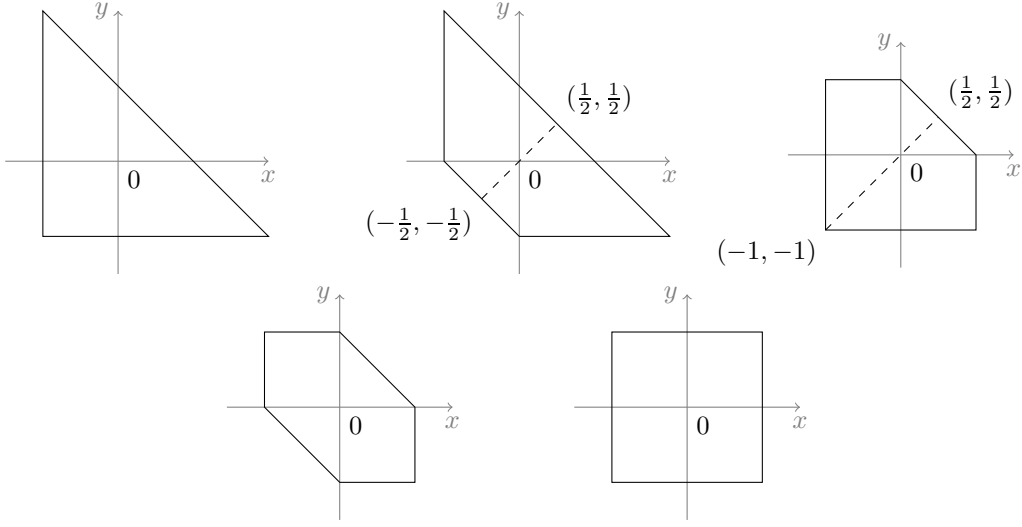

\begin{example}
    Let $P$ be the polytope for $\PP^2$ blown up at one point, i.e.,
    $v_1,v_2,v_3$ as above and $v_4=(1,1)$.
    Then $\Aut P\cong\ZZ_2$, as shown in Figure \ref{alpha_G example}. 
    There are therefore two choices for $H$. When $H=\Aut P$, $P^H$ is the line segment $\co\{(-\frac{1}{2},-\frac{1}{2}),(\frac{1}{2},\frac{1}{2})\}$. By Theorem \ref{alpha_k,Gformula}, $\alpha_{k,G(H)}=\alpha_{G(H)}=\frac{1}{2}$, with minimum achieved at $\pm(\frac{1}{2},\frac{1}{2})$. When $H$ is the trivial group then $P^H=P$ and 
    Theorem \ref{alpha_k,Gformula} gives 
    $\alpha_{k,G(H)}=\alpha_{G(H)}=\frac{1}{3}$, 
    with minimum achieved at either of the vertices $(-1,2)$ or $(2,-1)$. 
    \end{example}

\begin{example}
    Let $P$ be the polytope for $\PP^2$ blown up at two points, i.e.,
    $v_1,v_2,v_3$ as above and $v_4=(-1,0), v_5=(0,-1)$.
    Again $\Aut P\cong\ZZ_2$, as shown in Figure \ref{alpha_G example}. 
    For $H=\Aut P$, $P^H$ is the line segment $\co\{(-1,-1),(\frac{1}{2},\frac{1}{2})\}$.
    By Theorem \ref{alpha_k,Gformula}, $\alpha_{k,G}=\alpha_G=\frac{1}{3}$, with minimum achieved at $(-1,-1)$. When $H$ is the trivial group then $P^H=P$ and 
    Theorem \ref{alpha_k,Gformula} gives 
    $\alpha_{k,G(H)}=\alpha_{G(H)}=\frac{1}{3}$, 
    with minimum achieved still at $(-1,-1)$.
\end{example}

\begin{example}
\lb{P2blowupthreeExam}
    Let $P$ be the polytope for $\PP^2$ blown up at three non-colinear points, i.e.,
    $v_1,\ldots,v_5$ as in the previous example and $v_6=(1,1)$.
    Now $\Aut P\cong D_{12}$, the dihedral group of order 12 consisting of the cyclic permutations of the 6 vertices and the 6 reflections, as in Figure \ref{alpha_G example}. 
    For $H=\Aut P$, $P^H$ is the origin, so by Theorem \ref{alpha_k,Gformula}, $\alpha_{k,G(H)}=\alpha_{G(H)}={1}$. 
    When $H$ is the trivial group then $P^H=P$ and 
    Theorem \ref{alpha_k,Gformula} gives 
    $\alpha_{k,G(H)}=\alpha_{G(H)}=\frac{1}{2}$, 
    with minimum achieved at any vertex of $P$.
    When $H$ is the subgroup $\ZZ_2$ given by reflection about $y=x$,
    $P^H$ is the line segment 
    $\co\{(-\frac{1}{2},-\frac{1}{2}),(\frac{1}{2},\frac{1}{2})\}$
    and
    $\alpha_{k,G(H)}=\alpha_{G(H)}=\frac{1}{2}$, 
    with minimum achieved at $\pm(\frac{1}{2},\frac{1}{2})$.
    When $H$ is the subgroup $\ZZ_2$ given by reflection about $y=-x$,
    $P^H=\co\{(-1,1),(1,-1)\}$, and
    $\alpha_{k,G(H)}=\alpha_{G(H)}=\frac{1}{2}$, 
    with minimum achieved at $(\pm1,\mp1)$.
    Same for the other reflections.
    When $H$ contains a cyclic permutation, or more than one reflection, $P^H$ is the origin and $\alpha_{k,G(H)}=\alpha_{G(H)}={1}$.
\end{example}

\begin{example}
\lb{P1P1Exam}
    Let $P$ be the polytope for $\PP^1\times\PP^1$,
    i.e., with $v_1,v_2$ as above and $v_3=(-1,0)$ and $v_4=(0,-1)$, so
    $P=[-1,1]^2$ and 
    $\Aut P\cong D_{8}$, the dihedral group of order 8 generated by 4 reflections
    shown in Figure \ref{alpha_G example}. 
    Similar to previous examples, we obtain
    $\alpha_{k,G(H)}=\alpha_{G(H)}={1}$ whenever $H$ contains a cyclic permutation or more than one reflection. Otherwise it equals $1/2$.
\end{example}

\begin{example}
\lb{alphakmGHExample}
Let $P$ be the polytope for $X=\PP^2$ from Example \ref{P2ExampleFirstExam}.
Recall that $\Aut P =S_3$. Let $H=A_3$, the alternating group of order 3, generated by a cyclic permutation of the three vertices. 
The group is represented by $\{I,A,A^2\}$ where
$$
A=\begin{pmatrix}-1&-1\cr1&0\end{pmatrix}.
$$
All orbits of $H$ on $M\cong\ZZ^2$ have cardinality 3 except the orbit of the origin, that has cardinality 1. Any $G(H)$-invariant subspace of $H^0(X,-kK_X)$
is spanned by a collection of monomials indexed by a union
$\cup_{i=1}^\ell O_i^{(k)}$
of $H$-orbits on $kP\cap M$. This forces the subspace to have dimension $m=3\ell$ or $m=3(\ell-1)+1$.
Thus, $m\equiv_30,1$ 
(see Figure \ref{P2-counterexample-figure}
for the case $k=1$). As a result, if $m\equiv_32$ 
then $\alpha_{k,m,G(H)}=\infty$.
For this reason we only consider in \S\ref{alphakmSec} the case $H=\{\id\}$.
\end{example}

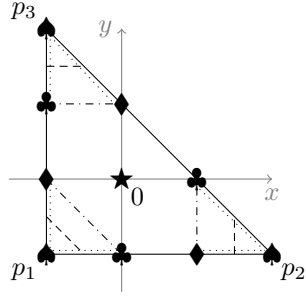
\begin{figure}
        \centering
        \begin{tikzpicture}
            \draw[->,gray](-1.5,0)--(2,0)node[below]{$x$};
            \draw[->,gray](0,-1.5)--(0,2)node[left]{$y$};
            \draw(0,0)node[below right]{$0$};
            \draw(2,-1)node[below right]{$p_2$};
            \draw(-1,2)node[above left]{$p_3$};
            \draw(-1,-1)node[below left]{$p_1$};
            \draw(2,-1)node{$\spadesuit$}--(1,0)node{$\clubsuit$}--(0,1)node{$\blacklozenge$}--(-1,2)node{$\spadesuit$}--(-1,1)node{$\clubsuit$}--(-1,0)node{$\blacklozenge$}--(-1,-1)node{$\spadesuit$}--(0,-1)node{$\clubsuit$}--(1,-1)node{$\blacklozenge$}--cycle;
            \draw(0,0)node{$\bigstar$};
            \draw[dashdotted](1,-1)--(1,0);
            \draw[dashdotted](-1,1)--(0,1);
            \draw[dashdotted](-1,0)--(0,-1);
            \draw[dotted](-1,-0.95)--(0,-0.95);
            \draw[dotted](-0.95,-1)--(-0.95,0);
            \draw[dotted](-0.95,1)--(-0.95,2);
            \draw[dotted](-0.075,0.975)--(-0.95,1.90);
            \draw[dotted](1,-0.95)--(2,-0.95);
            \draw[dotted](0.975,-0.075)--(1.90,-0.95);
            \draw[densely dashed](1.5,-1)--(1.5,-0.5);
            \draw[densely dashed](-1,1.5)--(-0.5,1.5);
            \draw[densely dashed](-1,-0.5)--(-0.5,-1);
        \end{tikzpicture}
        \caption{The four orbits $O_1^{(1)},\ldots,O_4^{(1)}$ of the action of the cyclic group of order 3 (generated by cyclic permutation of the homogeneous coordinates of $\PP^2$) on the polytope corresponding to $\PP^2$ with $k=1$.
        The dashed-dotted lines depict the sets on which $h_{\Delta_1}$ 
        is constant equal to 1 and that compute $\alpha_{1,2,(S^1)^2}=1/2$.
        These sets are the three components of $-\partial P\cap P$.
        The dotted lines depict the sets on which $h_{\Delta_1}$ 
        is not constant but whose minimum is 1 and that also compute $\alpha_{1,2,(S^1)^2}=1/2$.
        The dashed lines depict the sets on which $h_{\Delta_1}$ 
        is constant equal to $3/2$ and that compute $\alpha_{2,2,(S^1)^2}=2/5$.
        These sets are the three components of $-\frac32\partial P\cap P$.}
        \lb{P2-counterexample-figure}
    \end{figure}

\begin{example}
\lb{alphakmTExample}
Let $P$ be the polytope for $X=\PP^2$ from Example \ref{P2ExampleFirstExam}.
We compute the obstruction \eqref{starPEq}, i.e., compute
$
\hbox{\rm argmax}_{P}\|\,\cdot\,\|_{-P}
=
\hbox{\rm argmax}_{P}h_{Q}
=
\hbox{\rm argmax}_{P}h_{\Delta_1}
$.
Recall $\Delta_1=\{v_1,v_2,v_3\}=\{(1,0),(0,1),(-1,-1)\}$.
Let $y=(y_1,y_2)$ be coordinates on $P$.
Then,
$$
h_{v_1}(y)=y_1,
\q
h_{v_2}(y)=y_2,
\q
h_{v_3}(y)=-y_1-y_2,
$$
so 
$$
h_{\Delta_1}(y)=\max\{y_1,y_2,-y_1-y_2\}.
$$
By Lemma \ref{ConvMaxAttaindVertexLem} $\Ver P\cap \hbox{\rm argmax}_{P}h_{\Delta_1}
\not=\emptyset$. Note,
$$
\Ver P=\{p_1,p_2,p_3\}=\{(-1,-1),(2,-1),(-1,2)\}.
$$
So
$$
\max_P h_Q=\max\{h_Q(p_1),h_Q(p_2),h_Q(p_3)\}=\max\{2,2,2\}=2,
$$
and $\Ver P\subset \hbox{\rm argmax}_{P}h_{\Delta_1}$.
Suppose that $h_Q(y)=2$. Then a computation shows that $y\in\Ver P$.
Thus $\Ver P= \hbox{\rm argmax}_{P}h_{\Delta_1}$. By Theorem \ref{SecondMainThm},
$\alpha_{k,m,(S^1)^n}>\alpha=\frac13$ for all $k\in\NN$ and $m\ge2$.

Let us compute $\alpha_{1,2,(S^1)^n}$.
Let $\cF=\{f_1,f_2\}\subset P\cap M$. 
By Corollary \ref{alphakmtorusCor} it suffices to consider `minimal' $\cF$ in the sense
that for no other $\cF'$, $\co\cF'\subset\co\cF$.
So for instance, we do not need to consider 
$\{p_1,p_2\}$ but instead
$$
\{p_1,(0,-1)\}, 
\{(0,-1),(1,-1)\},
\{(1,-1),p_2\}.
$$
In fact,
$\min_{\co\{p_1,(0,-1)\}}h_Q=h_Q(0,-1)=1$ while
$\min_{\{p_1,p_2\}}h_Q=h_Q(1/2,-1)=1/2$.

Also, again by Corollary \ref{alphakmtorusCor}, 
we do not need to consider any $\cF$ such that $0\in\co\cF$
since on such an interval the support function attains its absolute
minimum, i.e., vanishes.

Next, observe that 
$
h_{\Delta_1}=1
$
on the vertices of the hexagon
$$
P_1:=\co\{\pm(1,0),\pm(0,1),\pm(-1,1)\}.
$$
By Lemma \ref{ConvMaxAttaindVertexLem},
$$
h_{\Delta_1}|_{P_1}\le h_{\Delta_1}|_{\Ver P_1}=1.
$$
Thus (by Corollary \ref{alphakmtorusCor})
we do not need to consider any $\cF$ 
such that $\co\cF$ intersects $P_1$,
as such an $\cF$ will satisfy
$\min_{\co\cF}h_Q\le 1$.
Since every $\cF$ intersects $P_1$ it follows that
$\alpha_{1,2,(S^1)^2}=1/2$
by Corollary \ref{alphakmtorusCor}.
See Figure \ref{P2-counterexample-figure}.

By the same reasoning we can compute $\alpha_{k,2,(S^1)^2}$ using
the hexagons
$$
\begin{aligned}
P_k:=&\co\bigg\{
\Big(2-\frac1k,\frac1k-1\Big),
\Big(2-\frac1k,-1\Big),
\Big(-1+\frac1k,-1\Big),
\Big(-1,-1+\frac1k\Big),
\cr
&\q\q\q\q
\Big(-1,2-\frac1k\Big),
\Big(-1+\frac1k,2-\frac1k\Big)
\bigg\}
\cr
=&\frac1k\co\bigg\{
\Big(2 k-1,1-k\Big),
\Big(2k-1,-k\Big),
\Big(-k+1,-k\Big),
\Big(-k,-k+1\Big),
\cr
&\q\q\q\q\Big(-k,2k-1\Big),
\Big(-k+1,2k-1\Big)
\bigg\}\subset P\cap M/k.
\end{aligned}
$$
By Lemma \ref{ConvMaxAttaindVertexLem},
$$
h_{\Delta_1}|_{P_k}\le h_{\Delta_1}|_{\Ver P_k}=2-1/k.
$$
Since every $\cF$ intersects $P_k$ it follows that
$$\alpha_{k,2,(S^1)^2}=\Big[1+2-\frac1k \Big]^{-1}=\frac{k}{3k-1}$$
by Corollary \ref{alphakmtorusCor}. Note that
$\lim_k\frac{k}{3k-1}=1/3=\alpha$ (recall Example \ref{P2ExampleFirstExam}
and \eqref{alphakBJEq}) in accordance with Proposition \ref{convergencalphakmProp}
and that this is a monotone sequence in accordance with
Proposition \ref{alpha_k,m,G monotone in m}.

\end{example}

The next result should be well-known but we could not find a reference.
\blem
\lb{alphahalfLem}
    Let $X$ be toric Fano.
    Let $P\subset M_\RR$ (see (\ref{NRMREq}), (\ref{PQcircDef})) be 
the polytope associated to $(X,-K_X)$. Then $\alpha\le1/2$, and
$P$ is centrally symmetric
(i.e., $P=-P$) if and only if $\alpha=1/2$.
If $P$ is not centrally symmetric then $\alpha\le 1/3$.
\elem

\begin{proof}
    If $P\neq-P$, then there is $u_0\in P\setminus-P$. 
    By \eqref{PDef2Eq},
$-P=\{y\in M_\RR\,:\, \max_j\langle v_j,y\rangle\le1\}$.
Thus 
    $\langle u_0,v_{i_0}\rangle>1$
    for some $v_{i_0}\in\Delta_1\subset N$ (recall \eqref{Delta1Eq}). Recalling Theorem \ref{alpha_k,Gformula} and \eqref{alphakBJEq}, or \cite[Corollary 7.16]{BJ20},
    $$
        \alpha=\min_{u\in P}\min_i\frac{1}{1+
        \langle u,v_i\rangle}\leq\frac{1}{1+
        \langle u_0,v_{i_0}\rangle}<\frac{1}{2}.
    $$
In fact, by convexity one may take $u_0\in \Ver P\setminus-P$,
so $u_0\in M$ by Lemma \ref{PIntegralLem} and hence 
$\ZZ\ni \langle u_0,v_{i_0}\rangle\ge2$, i.e., $\alpha\le1/3$.

    If $P=-P$, by \eqref{anotheralphakconstEq} and \eqref{alphakBJEq},
    $$
        \alpha=\frac{1}{1+\max\limits_{ P}\left\|\,\cdot\,\right\|_{-P}}=\frac{1}{1+\max\limits_{ P}\left\|\,\cdot\,\right\|_P}=\frac{1}{1+1}=\frac{1}{2}
    $$
by \eqref{normdef} and \eqref{infachievedEq}.
\end{proof}

\begin{figure}
    \centering
    \begin{subfigure}{0.4\textwidth}
        \centering
        \begin{tikzpicture}
            \draw[->,gray](-2,0)--(1.5,0)node[below]{$x$};
            \draw[->,gray](0,-2)--(0,1.5)node[left]{$y$};
            \draw(0,0)node[below right]{$0$};
            \draw(-.5,0)--(0,-.5)--(1,-.5)--(-.5,1)--cycle;
            \draw[dashed](1,0)--(0,1)--(-2,1)--(1,-2)--cycle;
        \end{tikzpicture}
    \end{subfigure}
    \begin{subfigure}{0.4\textwidth}
        \centering
        \begin{tikzpicture}
            \draw[->,gray](-1.75,0)--(1.75,0)node[below]{$x$};
            \draw[->,gray](0,-1.75)--(0,1.75)node[left]{$y$};
            \draw(0,0)node[below right]{$0$};
            \draw(-.5,.5)--(-.5,-.5)--(.5,-.5)--(.5,0)--(0,.5)--cycle;
            \draw[dashed](1,-1)--(1,1)--(-1,1)--(-1,0)--(0,-1)--cycle;
        \end{tikzpicture}
    \end{subfigure}
    \caption{The polytope $P$ for $\PP^2$ blown up 1 or 2 points, and the level set $\{\|\cdot\|_{-P}=2\}$.}\label{counterexample figure}
\end{figure}
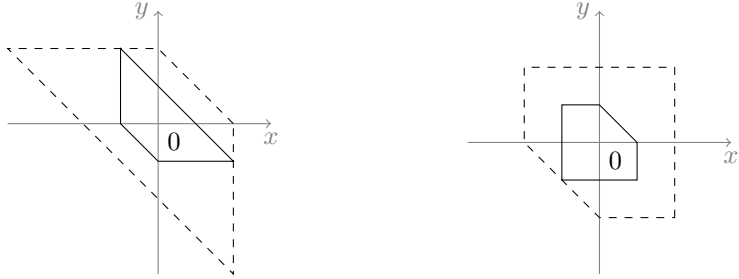

\begin{example}
\lb{alphakmSymmExample}
The purpose of this paragraph is to show that when $n=2$, condition \eqref{starPEq} holds for a Delzant polytope $P$
if and only if $P$ is not centrally symmetric, i.e., $P\not=-P$. 
Alternatively, in $n=2$, Conjecture \ref{TianGenConj}
holds if and only if $\alpha=1/2$, by Lemma \ref{alphahalfLem}.
However, in dimension $n\ge3$ this is no longer the case: for instance 
$\PP^2\times\PP^1$ has a non-centrally symmetric polytope $P$
but condition \eqref{starPEq} fails.

When $n=2$, the centrally symmetric
Delzant polytopes are associated to $\PP^2$ blown-up at the three
non-colinear points, and  $\PP^1\times\PP^1$
 (Examples \ref{P2blowupthreeExam}--\ref{P1P1Exam}).
 Recall the description of \eqref{starPEq} given in \S\ref{ProofThm1.6SubSec}.
Since $P=-P$, the intersection of $P$ and
$\max_P\|\cdot\|_{-P}\partial(-P)$ is precisely all of $\partial P$, i.e.,
\eqref{starPEq} fails. 

Thus, it remains to check $\PP^2$ blown-up at up to two points. We have already
showed that \eqref{starPEq} holds for (the non-centrally symmetric) $P$ coming from  
$\PP^2$ (Example \ref{alphakmTExample}).
It thus remains to check the remaining two cases.
The polytope for the 1-point blow-up is a subset of the one for $\PP^2$ by 
chopping a corner. Thus 
the intersection of $P$ and
$\max_P\|\cdot\|_{-P}\partial(-P)=-2\partial P$
is still just the two vertices $(-1,2)$ and $(2,-1)$.
For the 2-point blow-up 
the intersection of $P$ and
$\max_P\|\cdot\|_{-P}\partial(-P)=-2\partial P$
is just the vertex $(-1,-1)$, concluding the proof. See Figure \ref{counterexample figure}.

\end{example}

\bigskip
\textsc{University of Maryland}

\bigskip
{\tt cjin123@terpmail.umd.edu, yanir@alum.mit.edu}

\end{document}